\documentclass[11pt, reqno]{amsart}
\usepackage{latexsym,amsmath,amssymb, amsthm, mathscinet, mathtools}
\usepackage{cases, verbatim}
\usepackage[active]{srcltx}
\usepackage{hyperref}
\usepackage{xcolor}

\usepackage[margin=1.5in]{geometry}

\setlength{\topmargin}{-0.2in}
\setlength{\oddsidemargin}{0.3in}
\setlength{\evensidemargin}{0.3in}
\setlength{\textwidth}{5.9in}
\setlength{\rightmargin}{0.7in}
\setlength{\leftmargin}{-0.3in}
\setlength{\textheight}{9.1in}

\newtheorem{thm}{Theorem}
\newtheorem{lem}[thm]{Lemma}
\newtheorem{prop}[thm]{Proposition}

\theoremstyle{remark}
\newtheorem{rmk}[thm]{Remark}
\newtheorem{example}[thm]{Example}
\newtheorem{conj}[thm]{Conjecture}

\numberwithin{equation}{section}

\theoremstyle{definition}
\newtheorem{defi}[thm]{Definition}

\numberwithin{thm}{section} 

\makeatletter

\newcommand{\Rmnum}[1]{\expandafter\@slowromancap\romannumeral #1@}
\makeatother



\def\R{{\mathbb R}}

\def\H{{\mathbb H}}

\def\S{{\mathbf S}}

\newcommand{\vep}{\varepsilon}
\newcommand{\ol}{\overline}

\newcommand{\dive}{\operatorname{div}}
\newcommand{\tr}{\operatorname{tr}}

\newcommand{\bpm}{\begin{pmatrix}}
\newcommand{\epm}{\end{pmatrix}}
\newcommand{\la}{\left\langle}
\newcommand{\ra}{\right\rangle}

\newcommand{\beq}{\begin{equation}}
\newcommand{\eeq}{\end{equation}}
\newcommand{\spa}{\operatorname{span}}

\DeclareMathOperator*{\limsups}{limsup^\ast}









\belowdisplayskip=18pt plus 6pt minus 12pt \abovedisplayskip=18pt
plus 6pt minus 12pt
\parskip 8pt plus 1pt

\title[Horizontal quasiconvexity operator]{\protect{A second-order operator for horizontal quasiconvexity in the Heisenberg group \\ and application to convexity preserving for horizontal curvature flow}}

\author[A. Kijowski]{Antoni Kijowski}
\address[Antoni Kijowski]{Analysis on Metric Spaces Unit, Okinawa Institute of Science and Technology Graduate University, Okinawa 904-0495, Japan, {\tt antoni.kijowski@oist.jp}}

\author[Q. Liu]{Qing Liu}
\address[Qing Liu]{Geometric Partial Differential Equations Unit, Okinawa Institute of Science and Technology Graduate University, Okinawa 904-0495, Japan, {\tt qing.liu@oist.jp}}

\author[Y. Zhang]{Ye Zhang}
\address[Ye Zhang]{Analysis on Metric Spaces Unit, Okinawa Institute of Science and Technology Graduate University, Okinawa 904-0495, Japan, {\tt ye.zhang2@oist.jp}}

\author[X. Zhou]{Xiaodan Zhou}
\address[Xiaodan Zhou]{Analysis on Metric Spaces Unit, Okinawa Institute of Science and Technology Graduate University, Okinawa 904-0495, Japan, {\tt xiaodan.zhou@oist.jp}}

\begin{document}

\begin{abstract}
This paper is concerned with a PDE approach to horizontally quasiconvex (h-quasiconvex) functions in the Heisenberg group based on a nonlinear second order elliptic operator. We discuss sufficient conditions and necessary conditions for upper semicontinuous, h-quasiconvex functions in terms of the viscosity subsolution to the associated elliptic equation. Since the notion of h-quasiconvexity is equivalent to the horizontal convexity (h-convexity) of the function's sublevel sets, we further adopt these conditions to study the h-convexity preserving property for horizontal curvature flow in the Heisenberg group. Under the comparison principle, we show that the curvature flow starting from a star-shaped h-convex set preserves the h-convexity during the evolution. 
\end{abstract}

\subjclass[2020]{35R03, 35D40, 52A30, 53E10}
\keywords{Heisenberg group, h-quasiconvex functions, h-convex sets, horizontal curvature flow, viscosity solutions}

\maketitle

\section{Introduction}\label{sec:intro}

\subsection{Background}
This paper is devoted to studying a nonlinear elliptic operator for horizontally quasiconvex (h-quasiconvex) functions in the first Heisenberg group $\H$ and applying the second order characterization to investigate the convexity preserving property of the horizontal curvature flow. This paper is closely related with our previous work \cite{KLZ} on a first order characterization for h-quasiconvex functions based on a nonlinear nonlocal Hamilton-Jacobi operator. Our focus in this paper is different. Inspired by the work \cite{BGJ13} of Barron, Goebel and Jensen, we look into a second order operator instead, which leads to a new application to geometric properties of the curvature flow equation in the Heisenberg group. 

Let us briefly go over several basic notions about the Heisenberg group. See \cite{CDPT} for a detailed introduction. The Heisenberg group $\mathbb{H}$ is $\mathbb{R}^{3}$ endowed with the non-commutative group multiplication 
\[
(x_p, y_p, z_p)\cdot (x_q, y_q, z_q)=\left(x_p+x_q, y_p+y_q, z_p+z_q+\frac{1}{2}(x_py_q-x_qy_p)\right),
\]
for all $p=(x_p, y_p, z_p)$ and $q=(x_q, y_q, z_q)$ in $\H$. For any smooth function $f$,
we define the horizontal gradient $\nabla_H f$ to be 
\[
\nabla_H f=(X_1 f, X_2 f),
\]
where $X_1 f$ and $X_2 f$ denote the horizontal derivatives of $f$ determined by the left-invariant vector fields
\[
X_1=\frac{\partial}{\partial x}-\frac{y}{2}\frac{\partial}{\partial z}, \quad  X_2=\frac{\partial}{\partial y}+\frac{x}{2}\frac{\partial}{\partial z}.
\]
Similarly, the (symmetrized) horizontal Hessian of $f$ is defined by 
\[
(\nabla_H^2 f)^\star=
\begin{pmatrix}
 X_1^2 f & (X_1X_2 +X_2X_1 f)/2\\
 (X_1X_2 +X_2X_1 f)/2 & X_2^2 f
\end{pmatrix}.
\]
We denote by $\dive_H f$ the horizontal divergence for a smooth vector valued function $f=(f_1, f_2): \H\to \R^2$, i.e., $\dive_H f=X_1 f_1+X_2 f_2$.
Let $\H_0$ denote the horizontal plane through the group identity $0$, that is, 
\[
\mathbb{H}_0=\{h\in \mathbb{H}: h=(x, y, 0) \ \text{ for $x, y\in \mathbb{R}$}\}.
\]
For any $p\in \H$, the set
\[
\H_p=\{p\cdot h: \ h\in \H_0\}
\]
is called the horizontal plane through $p$. It is clear that $\H_p=\spa\{X_1(p), X_2(p)\}$ for every $p\in \H$. Also, a line segment $[p, q]$ in $\H$ is said to be horizontal if $p\in \H_q$. 

We are interested in the notion of so-called h-convex sets in $\H$, which is proposed by \cite{DGN1} as a possible extension of Euclidean convex sets to the sub-Riemannian setting. A set $E\subset \H$ is said to be h-convex if the horizontal segment connecting any two points in $E$ lies in $E$. The authors of \cite{DGN1} name such $E$ a weakly h-convex set but for simplicity hereafter we refer to it as h-convex set. Consult \cite{Rithesis, CCP1, ArCaMo} etc. for various properties about this notion. Other notions of set convexity in the Heisenberg group such as geodesic convexity and strong h-convexity and discussions on their relations can be found in \cite{LuMaSt, MoR, DGN1, Rithesis, CCP1}

The h-convexity is obviously much weaker than the notion of convexity in the Euclidean space. As pointed out in \cite{CCP2}, h-convex sets do not even need to be connected. In fact, the union of two distinct points on the $z$-axis is an h-convex set, as there are no horizontal segments connecting the points. Such a special feature causes much difficulty in studying h-convex sets directly. We turn to a more analytic approach, examining instead the so-called h-quasiconvex functions in the Heisenberg group, which are defined to be the functions whose sublevel sets are h-convex; see Definition \ref{hqc}. It is a natural counterpart of the quasiconvex functions in the Euclidean space studied in \cite{SuYa, CCP2}, but again it is a much weaker notion than the Euclidean quasiconvexity, which requires all sublevel sets of the function to be convex in the Euclidean space. The introduction of h-quasiconvexity enables us to incorporate PDE methods into our analysis of h-convex sets. We expect that this formulation will bring us new insights, as in the Euclidean case it successfully provides PDE-based characterizations for general quasiconvex functions \cite{BGJ1, BGJ13}.

Our goal is to extend the Euclidean approaches introduced by Barron, Goebel and Jensen \cite{BGJ1, BGJ13} to the Heisenberg group and explore their applications in convex analysis and PDE theory on sub-Riemannian manifolds. As mentioned previously, our prior work \cite{KLZ} gives a characterization for h-quasiconvex functions based on a sub-Riemannian analogue of a first order nonlocal operator in \cite{BGJ1} and applies this characterization to the contruction of h-quasiconvex envelope and h-convex hull. 
Aimed to facilitate broader applications, our current work attempts to develop a distinct PDE-based characterization inspired by \cite{BGJ13}. We will compare our results with Euclidean approach and discuss in detail an application to the horizontal curvature flow in the Heisenberg group. 

\subsection{Necessary and sufficient conditions for horizontal quasiconvexity}
The nonlinear operator for Euclidean quasiconvexity proposed in \cite{BGJ13} is of second order and takes the form 
\begin{equation}\label{sec operator}
L_{eucl}[f](x)=\min\{\la \nabla^2 f(x) \eta, \eta\ra: \eta\in \R^n, |\eta|=1, \la \nabla f(x), \eta\ra=0\}
\end{equation}
for any $x\in \Omega$. 
It is not difficult to see at least formally that the sign of $L_{eucl}[f]$ is closely linked with the quasiconvexity of $f$. Note that when $f$ is of class $C^2(\Omega)$ and $\nabla f(x)\neq 0$, the quantity {$L_{eucl}[f](x)/|\nabla f(x)|$} 
represents the least principal curvature of the level surface of $f$ at $x$. In fact, for $f\in USC(\Omega)$ in a convex domain $\Omega$, the following results are obtained in \cite{BGJ13}:

\begin{enumerate}
    \item[(a)] If $f$ is quasiconvex in $\Omega$, then $L_{eucl}[f]\geq 0$ in $\Omega$ holds in the viscosity sense;
    \item[(b)] If $L_{eucl}[f]> 0$ in $\Omega$ holds in the viscosity sense, then $f$ is quasiconvex in $\Omega$;
    \item[(c)] If $L_{eucl}[f]\geq 0$ in $\Omega$ holds in the viscosity sense and and $f$ does not attain local maxima in $\Omega$, then  $f$ is quasiconvex in $\Omega$.
\end{enumerate}
A more detailed review about these results including the definition of viscosity subsolutions is given in Section~\ref{sec:eucl}. 

It is an intriguing question whether there is a second order operator that has similar properties in the Heisenberg group $\H$. A natural substitute of $L_{eucl}$ in $\H$ is given by 
\begin{equation}\label{sec operator heis}
L[f](p)=\min\{\la (\nabla_H^2 f)^\star(p) \eta, \eta\ra: \eta\in \R^2, |\eta|=1, \la \nabla_H f(p), \eta\ra=0\} 
\end{equation}
for $f\in C^2(\Omega)$ and $p\in \Omega$. When $\nabla_H f(p)\neq 0$, one can write $L[f](p)$ as
\begin{equation}\label{sec operator heis2}
\begin{aligned}
L[f](p)={1\over {|\nabla_H f(p) |^2}}\la (\nabla_H^2 f)^\star(p)\nabla_H f(p)^\perp, \nabla_H f(p)^\perp\ra=|\nabla_H f|\dive_H\left({\nabla_H f\over |\nabla_H f|}\right)(p). 
\end{aligned}
\end{equation}
Since the term $\dive_H\left({\nabla_H f/ |\nabla_H f|}\right)$ stands for the horizontal curvature of level sets of $f$, we expect that the h-quasiconvexity of $f$ can be determined by the sign of $L[f]$. Indeed, we obtain the following analogue of the results (a)(b) above in the Euclidean case. 

\begin{thm}[Characterization of H-quasiconvex functions]\label{main thm}
Let $\Omega$ be an h-convex open set in $\H$. Let $f\in USC(\Omega)$. Then $L[f]\geq 0$ in $\Omega$ holds in the viscosity sense if $f$ is h-quasiconvex in $\Omega$. Moreover, $f$ is h-quasiconvex in $\Omega$ if $L[f]>0$ in $\Omega$ holds in the viscosity sense. 
\end{thm}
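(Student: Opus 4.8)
The plan is to prove the two implications separately, both by reducing the statement to a one-dimensional convexity argument along horizontal segments, exactly as in the Euclidean case, but adapted to the non-commutative group structure.

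For the first implication, suppose $f$ is h-quasiconvex and let $\varphi\in C^2$ touch $f$ from below at $p_0$, i.e.\ $f-\varphi$ has a local minimum at $p_0$ with $f(p_0)=\varphi(p_0)$. We want to show $L[\varphi](p_0)\ge 0$. If $\nabla_H\varphi(p_0)=0$ there is nothing to check since the constraint set $\{\eta : |\eta|=1,\ \langle\nabla_H\varphi(p_0),\eta\rangle=0\}$ is all of the unit circle and we would instead need $\langle(\nabla_H^2\varphi)^\star(p_0)\eta,\eta\rangle\ge 0$ for the minimizing $\eta$ --- wait, this still requires an argument. So: assume first $\nabla_H\varphi(p_0)\ne 0$, pick the unit $\eta\perp\nabla_H\varphi(p_0)$ realizing the minimum in \eqref{sec operator heis}, and consider the horizontal line segment through $p_0$ in the direction $\eta$, parametrized as $t\mapsto p_0\cdot(t\eta,0) =: \gamma(t)$ (a horizontal curve since $\H_{\gamma(t)}$ is spanned by the left-translated horizontal fields). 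Set $g(t)=\varphi(\gamma(t))$. A direct computation using left-invariance of $X_1,X_2$ gives $g'(0)=\langle\nabla_H\varphi(p_0),\eta\rangle=0$ and $g''(0)=\langle(\nabla_H^2\varphi)^\star(p_0)\eta,\eta\rangle = L[\varphi](p_0)$. Now I argue by contradiction: if $L[\varphi](p_0)<0$ then $g$ has a strict local max at $t=0$, so for small $\pm t_0$ we get $\varphi(\gamma(\pm t_0))<\varphi(p_0)=f(p_0)$, hence (using that $\varphi$ touches $f$ from below) $f(\gamma(\pm t_0))<f(p_0)$ after possibly shrinking; but $p_0=\gamma(0)$ is the midpoint of the horizontal segment $[\gamma(-t_0),\gamma(t_0)]$, and both endpoints lie in the sublevel set $\{f<f(p_0)\}$, which is h-convex, forcing $f(p_0)<f(p_0)$ — contradiction. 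The case $\nabla_H\varphi(p_0)=0$ needs the analogous argument run with the minimizing direction among \emph{all} unit vectors; one perturbs $\eta$ slightly or uses that the sublevel set $\{f<f(p_0)+\varepsilon\}$ is h-convex for every $\varepsilon>0$ together with a limiting argument. The main subtlety here is purely at the degenerate point $\nabla_H\varphi=0$ and at points where the horizontal plane degenerates relative to the segment — but along a genuinely horizontal segment the second-order Taylor expansion of $\varphi$ in the group, computed via the vector fields $X_1,X_2$, produces precisely $(\nabla_H^2\varphi)^\star$ with no first-order-in-$z$ correction, because the $z$-increment along $(t\eta,0)$ is second order in $t$ and is killed by $\nabla_H\varphi(p_0)\ne 0$ only through terms that cancel; this needs to be checked carefully.

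For the second implication, suppose $L[f]>0$ in the viscosity sense but $f$ is not h-quasiconvex. Then some sublevel set $\{f\le\lambda\}$ fails to be h-convex: there are $p,q$ with $p\in\H_q$, $f(p)\le\lambda$, $f(q)\le\lambda$, but $f(m)>\lambda$ for some $m$ on the horizontal segment $[p,q]$. Parametrize the segment as $\sigma(t)=q\cdot(t\,\xi,0)$, $t\in[0,1]$, with $p=\sigma(0)$, $q=\sigma(1)$ (after rescaling), and consider $h(t)=f(\sigma(t))$, an upper semicontinuous function on $[0,1]$ with $h(0),h(1)\le\lambda$ but $\max h>\lambda$. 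The standard trick: for small $\delta>0$ the function $t\mapsto h(t)-\delta\, t(1-t)$ attains an interior maximum at some $t_\delta\in(0,1)$ with value $>\lambda$; translating this into a statement in $\H$, the function $p\mapsto f(p)$ is touched from above along the segment by a smooth function, and one upgrades this to a full test function $\varphi$ on a neighborhood in $\H$ (adding a large multiple of the squared distance to the segment to penalize off-segment directions) so that $f-\varphi$ has a local max at $p_\delta=\sigma(t_\delta)$. By construction $\nabla_H\varphi(p_\delta)$ is (nearly) orthogonal to the segment direction $\xi$ — more precisely, the component of $\nabla_H\varphi(p_\delta)$ along the horizontal direction of the segment equals the one-dimensional derivative, which is $\delta(1-2t_\delta)$, small; and $\langle(\nabla_H^2\varphi)^\star(p_\delta)\eta,\eta\rangle$ for $\eta$ in the segment direction equals $h''$-type term $\le -2\delta<0$ plus controlled errors. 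Choosing the penalization so that the minimizing $\eta$ in \eqref{sec operator heis} is forced to be essentially the segment direction, we get $L[\varphi](p_\delta)<0$, contradicting the viscosity subsolution inequality $L[f]>0$. The hard part here is the geometric bookkeeping: arranging the test function in $\H$ so that (i) $\nabla_H\varphi(p_\delta)\ne 0$ with its constrained direction $\eta$ pinned to the segment's horizontal direction, and (ii) the second-order off-segment contributions from the penalization do not spoil the sign — this requires that the penalizing term's horizontal Hessian be nonnegative in the relevant direction, which should follow from choosing the distance-squared to the (horizontal) line and using that its restriction to the orthogonal complement is convex.

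I expect the second implication to be the genuinely delicate one: constructing the $\H$-test function that simultaneously pins the constrained minimizing direction to the horizontal segment while keeping $\nabla_H\varphi\ne 0$ is the crux, and the strict inequality $L[f]>0$ (rather than $\ge 0$) is exactly what gives the slack to absorb the $O(\delta)$ and off-segment error terms. The first implication is conceptually routine once the second-order Taylor expansion of a $C^2$ function along a horizontal segment is written out correctly; the only care needed is the degenerate case $\nabla_H\varphi(p_0)=0$, handled by applying h-convexity of $\{f<f(p_0)+\varepsilon\}$ and letting $\varepsilon\to 0^+$.
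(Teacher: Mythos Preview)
For the first implication you have the touching direction reversed: a viscosity \emph{sub}solution of $L[f]\ge 0$ is tested by smooth $\varphi$ such that $f-\varphi$ has a local \emph{maximum} ($\varphi$ touches $f$ from above, see Definition~\ref{sub heis s}). Your argument actually needs this: from $\varphi(\gamma(\pm t_0))<f(p_0)$ you can only conclude $f(\gamma(\pm t_0))<f(p_0)$ when $f\le\varphi$ nearby, not when $f\ge\varphi$. With that correction your proof is essentially the paper's (Theorem~\ref{strict heis}), and no separate treatment of $\nabla_H\varphi(\hat p)=0$ is needed: the constraint is vacuous, the minimizing $v_h$ still exists, and the same Taylor expansion along $t\mapsto \hat p\cdot(t v_h,0)$ gives the contradiction.

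For the second implication there is a genuine gap. Your $\delta t(1-t)$ perturbation along the segment makes the segment-direction component of $\nabla_H\varphi(p_\delta)$ equal to $\delta(1-2t_\delta)$, generically nonzero; but in two horizontal dimensions the constraint $|\eta|=1$, $\eta\perp\nabla_H\varphi(p_\delta)$ determines $\eta$ uniquely up to sign, and if the maximum $p_\delta$ lies on the segment (where the transverse penalization has zero gradient) then $\nabla_H\varphi(p_\delta)$ points \emph{along} the segment, forcing $\eta$ to be the \emph{transverse} direction---where your large penalization makes $\langle(\nabla_H^2\varphi)^\star\eta,\eta\rangle$ large and positive, not negative. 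The paper (Theorem~\ref{sufficient1-heis}) avoids this by dropping the $\delta$-term entirely and engineering the penalization so that the segment-direction horizontal derivative vanishes \emph{identically}: after normalizing the segment to $[0,(a,b,0)]$ with $a^2+b^2=1$, set
\[
\varphi(x,y,z)=f(w)+k\Bigl((bx-ay)^2+\bigl(z-\tfrac12(ax+by)(bx-ay)\bigr)^2\Bigr).
\]
A direct computation gives $(aX_1+bX_2)\varphi\equiv 0$, hence $(a,b)\perp\nabla_H\varphi$ everywhere and $(aX_1+bX_2)^2\varphi\equiv 0$, so $L[\varphi](p)\le 0$ at every point. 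Since $\varphi\equiv f(w)$ on the segment, the interior maximum of $f-\varphi$ is forced by $f(w)>\max\{f(q_1),f(q_2)\}$ and upper semicontinuity (for $k$ large), with no error terms to manage and no need to know where the maximum lands or whether $\nabla_H\varphi$ vanishes there. This exact algebraic cancellation, tailored to the Heisenberg left-invariant fields, is the missing ingredient in your construction.
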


We will prove the sufficient condition and necessary condition for h-quasiconvexity separately in Section \ref{sec:necessary} and Section \ref{sec:sufficient}. It is however not clear to us whether the sub-Riemannian analogue of the statement (c) above holds. We do not know whether the viscosity inequality $L[f]\geq 0$ in $\Omega$ together with the nonexistence of local maxima of $f$ in $\Omega$ is sufficient to imply the h-quasiconvexity of $f$ in $\Omega$. 

We emphasize that in our sub-Riemannian case one needs to handle these viscosity inequalities carefully due to the discontinuity of the operator $L[f]$ at a characteristic point $p$ where $\nabla_H f(p)=0$ holds. Inspired by the standard theory of viscosity solutions, in addition to the original operator $L$, we also consider two variants of $L$ given by 
\begin{equation}\label{envelope2}
\ol{L[f]}(p)=\limsup_{q\to p}L[f](q),
\end{equation}
\begin{equation}\label{envelope1}
L^\ast[f](p)=\limsup_{\substack{\xi\to \nabla_H f(p)\\ X\to (\nabla_H^2 f)^\star(p)}} \min\{\la X \eta, \eta \ra: \eta\in \R^2, |\eta|=1, \la \xi, \eta\ra=0\}    
\end{equation}
for any $f\in C^2(\Omega)$ and $p\in \Omega$. It is not difficult to see that for such $f$ and $p$, 
\begin{equation}\label{compare}
L[f](p)\le \ol{L[f]}(p)\le L^\ast[f](p).
\end{equation}

Note that taking such upper semicontinuous envelopes for the Euclidean operator $L_{eucl}$ does not make any difference in the results (a)(b)(c) for quasiconvexity in the Euclidean space; see Remark \ref{rmk weak}. However, applying weaker viscosity inequalities with the envelopes in \eqref{envelope2} or \eqref{envelope1} in $\H$ results in quite different scenarios. In Example \ref{positive}, we find that the function 
\[
f(x,y,z)= x^2 + \left(z +{ xy\over 2}\right)^2
\]
satisfies $L^\ast[f]>0$ and $\ol{L[f]}\geq 0$ in $\H$ but is not h-quasiconvex in $\H$. Since $f$ does not achieve any local maximum in $\H$, this example also shows that the sufficient condition like that in (c) fails to imply h-quasiconvexity of $f$ if we adopt the inequality  $L^\ast[f]\geq 0$ or $\ol{L[f]}\geq 0$ instead of $L[f]\geq 0$. Such a discrepancy in the properties of these operators, caused by the singularity at the characteristic points, constitutes a significant difference between the Euclidean space and Heisenberg group. 

In addition to the results described above, for our further applications we also introduce a stronger notion of h-quasiconvexity, which we call uniform h-quasiconvexity in this paper. A uniformly h-quasiconvex function is related to the viscosity inequality $L[f]\geq c$ for a constant $c>0$. More precise definition and properties of uniformly h-quasiconvex functions will be elaborated in Section \ref{sec:uniform}.

\subsection{Application to horizontal curvature flow}
As an application of our analysis on h-quasiconvex functions, we study the h-convexity preserving property of the motion by horizontal curvature. By using the level set formulation, we can write the equation as follows: 
\begin{numcases}{}
u_t-|\nabla_H u|\dive_H(\nabla_H u/|\nabla_H u|)=0 \quad &\text{in $\H\times (0, \infty)$,} \label{mcf}\\
u(\cdot, 0)=u_0 \quad &\text{in $\H$,} \label{initial}
\end{numcases}
where $u_0\in C(\H)$ is a given initial value. Note that for any smooth function $u$ and  $(p, t)\in \H\times (0, \infty)$ with $\nabla_H u(p, t)\neq 0$, $u_t(p, t)/|\nabla u(p, t)|$ and $\dive_H(\nabla_H u(p, t)/|\nabla_H u(p, t)|)$ respectively denote the normal velocity and curvature of the level surface $\{u(\cdot, t)=c\}$  at $p$ with the level $c=u(p, t)$ \cite{CDPT, DGN3}. In general, we cannot expect that the solution $u$ is smooth due to the degeneracy and nonlinearity of the parabolic operator. One may study the Cauchy problem in the framework of viscosity solutions.  We refer to \cite{CC1, DDR, FLM1, Oc, BCi, DDG} concerning well-posedness results for this problem. See other related discussions on this topic in \cite{CCS, CiLeSa, DSo}.  

We remark that the general uniqueness of viscosity solutions of \eqref{mcf}\eqref{initial} still remains an open question. In this paper, we thus assume that the comparison principle below holds.
\begin{enumerate}
\item[(CP)] Let $C\in \R$ and $K$ be a compact set of $\H$. Let $u\in USC(\H\times [0, \infty))$ and $v\in LSC(\H\times [0, \infty))$ be respectively a subsolution and a supersolution of \eqref{mcf} satisfying $u, v\leq C$ in $\H\times [0, \infty)$ and $u=v=C$ in $(\H\setminus K)\times [0, \infty)$. If $u\leq v$ in $\H\times \{0\}$, then $u\leq v$ in $\H\times [0, \infty)$.
\end{enumerate}
Such a comparison principle is established in \cite{FLM1} under the rotational symmetry (with respect to $z$-axis) of the sub- and supersolutions. More recently, the uniqueness is addressed in \cite{BCi} for solutions that are built by the vanishing viscosity method. However, it is still not clear whether (CP) holds for general viscosity sub- and supersolutions.  

Our focus is to discuss the following h-convexity preserving property for the motion by curvature in the Heisenberg group. For a given bounded open h-convex set $E_0 \subset \H$, let $u_0\in C(\H)$ be a function satisfying 
\begin{equation}\label{initial zero}
E_0=\{p\in \H: u_0(p)< 0\},
\end{equation}
 then $E_t=\{u(\cdot, t)< 0\}$ is h-convex for all $t\geq 0$. Convexity preserving property is well known for mean curvature flow in the Euclidean space \cite{Hui, GH}. We refer also to \cite{GGIS, ALL, LSZ} etc. for different approaches to convexity of viscosity solutions of the level set equation. 
 
In the Heisenberg group, although a similar preserving property for h-convexity is also expected to hold, it is not clear how to adapt the PDE methods in \cite{GGIS, ALL} in our sub-Riemannian case. The method in \cite{ALL} is generalized in the Heisenberg group \cite{LMZ, LZ2} for a class of nonlinear parabolic and elliptic equations under certain symmetry on the solutions, but the elliptic operator is required to be concave in the horizontal gradient, which does not fit the current case of horizontal curvature flow equation. 

On the other hand, the h-quasiconvexity preserving property for \eqref{mcf} can be heuristically observed from our preceding results. As clarified in \eqref{sec operator heis2}, the term involving the curvature agrees with our h-quasiconvexity operator $L$. Assuming that $u_0$ is h-quasiconvex in $\H$, we can apply Theorem \ref{main thm} to obtain $L[u_0]\geq 0$ in $\H$ in the viscosity sense. Formally, this implies that $u_t\geq 0$ at $t=0$. It then follows from a comparison argument that $u_t\geq 0$ for all $t\geq 0$, which in turn yields $L[u(\cdot, t)]\geq 0$ for all $t\geq 0$. Although our sufficient condition for h-quasiconvexity of $u(\cdot, t)$ as in Theorem \ref{main thm} actually requires a strict inequality $L[u(\cdot, t)]> 0$ in $\H$, it is already quite close to our goal of proving the h-convexity of $E_t$. 

 In order to close the gap at the final step above, we instead adopt the notion of uniform h-quasiconvexity so that for any $c>0$ the same formal argument enables us to obtain $L[u(\cdot, t)]\geq c$ for all $t\geq 0$ from initial value satisfying $L[u_0]\geq c$ in $\H$. We then can use Theorem \ref{main thm} to conclude the h-quasiconvexity of $u(\cdot, t)$. 
\begin{thm}[H-quasiconvexity preserving property]\label{cor uni-qc2}
Suppose that (CP) holds. Let $C\in \R$. Assume that $u_0\in C(\H)$ satisfies $u_0\leq C$ in $\H$ and $u_0=C$ outside a compact set of $\H$. Assume further that there exists $\hat{u}_0\in C(\H)$ uniformly h-quasiconvex in $\H$ satisfying 
\begin{equation}\label{eq growth}
\hat{u}_0(p)\leq L(|p|_G^4+1) \quad \text{in $\H$ for some $L>0$,}
\end{equation}
where $|\cdot|_G$ is the Kor\'{a}nyi gauge defined as in \eqref{gauge} below, and 
\begin{equation}\label{truncation}
u_0=\min\{\hat{u}_0, C\} \quad \text{in $\H$}.
\end{equation}
 Let $u$ be the unique solution of \eqref{mcf}\eqref{initial}. Then,  $u(\cdot, t)$ is h-quasiconvex in $\H$ for all $t\geq 0$.
\end{thm}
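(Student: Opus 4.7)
The plan is to propagate the uniform lower bound $L[\hat u_0]\ge c>0$ in time and then invoke the sufficient condition in Theorem~\ref{main thm}. Following the heuristic sketched just before the theorem statement, I would (i) build an auxiliary solution $\hat u$ of \eqref{mcf} starting from the (possibly unbounded) initial value $\hat u_0$ itself; (ii) verify that $\hat u_t\ge c$ in the viscosity sense so that \eqref{mcf} forces $L[\hat u(\cdot,t)]\ge c>0$ for every $t\ge 0$; and (iii) identify $u$ with a truncation of $\hat u$ in order to transfer h-quasiconvexity down to the solution of interest.

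For step (i), a Kor\'{a}nyi power of the form $|p|_G^4$ (or a suitable modification) is expected to yield an explicit global supersolution barrier for \eqref{mcf}. One then constructs $\hat u:=\lim_n \hat u^{(n)}$ by approximating $\hat u_0$ with the bounded truncations $\hat u_0^{(n)}:=\min\{\hat u_0,n\}$, solving each truncated Cauchy problem uniquely by (CP), and invoking the growth bound \eqref{eq growth} to ensure the monotone limit is finite and defines a viscosity solution of \eqref{mcf}. For step (ii), the candidate lower barrier $\phi(p,t):=\hat u_0(p)+ct$ should be a viscosity subsolution of \eqref{mcf}: any test function touching $\phi$ from above at $(p_0,t_0)$ splits into a spatial test function $\psi$ touching $\hat u_0$ from above at $p_0$ plus a linear-in-$t$ term, so the uniform h-quasiconvexity of $\hat u_0$ (encoded as $L^\ast[\hat u_0]\ge c$ in the viscosity sense) yields $\partial_t\phi-L^\ast[\phi]\le c-c=0$. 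Applying (CP) through the same approximation as in step (i) gives $\hat u(p,t)\ge \hat u_0(p)+ct$. Restarting the argument at an arbitrary time $s\ge 0$, legitimate by the time-translation invariance of \eqref{mcf}, then produces $\hat u(p,t+s)\ge \hat u(p,s)+ct$, and hence $\hat u_t\ge c$ globally in the viscosity sense. Substituting back into \eqref{mcf} yields $L[\hat u(\cdot,t)]\ge c>0$, and Theorem~\ref{main thm} delivers h-quasiconvexity of $\hat u(\cdot,t)$ for every $t\ge 0$.

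For step (iii), because \eqref{mcf} is a geometric level-set equation, composition of any viscosity solution with a continuous nondecreasing map $\R\to\R$ preserves viscosity solutions; applying this to $r\mapsto\min\{r,C\}$ shows that $\min\{\hat u,C\}$ is itself a viscosity solution of \eqref{mcf} with initial value $\min\{\hat u_0,C\}=u_0$, and a standard barrier argument based on \eqref{eq growth} and the hypothesis $u_0=C$ off a compact set shows that $\min\{\hat u,C\}$ agrees with $C$ outside some compact set of $\H$ for each $t\ge 0$. Uniqueness via (CP) then forces $u=\min\{\hat u,C\}$, and since $\{u(\cdot,t)<\lambda\}=\{\hat u(\cdot,t)<\lambda\}$ for $\lambda\le C$ and equals $\H$ for $\lambda>C$, h-convexity of every sublevel set of $u(\cdot,t)$ is inherited from that of $\hat u(\cdot,t)$. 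The principal obstacle lies in step (ii): the operator $L$ is discontinuous at characteristic points where $\nabla_H\hat u_0=0$, and Example~\ref{positive} warns that the envelopes $\ol{L[\cdot]}$ and $L^\ast[\cdot]$ can behave quite differently from $L$ itself, so the envelope used in the definition of uniform h-quasiconvexity must be carefully matched with the one used in the viscosity formulation of \eqref{mcf} for both the subsolution check on $\phi$ and the feedback step ``$\hat u_t\ge c\Rightarrow L[\hat u(\cdot,t)]\ge c$'' to be rigorous.
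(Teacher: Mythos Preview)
Your proposal follows a natural pure-PDE route, but the obstacle you yourself flag in step~(ii) is fatal rather than merely technical, and the proposal does not resolve it. The viscosity subsolution condition for \eqref{mcf} is stated with $F_\ast$, and $-F_\ast=L_0^\ast$; hence testing $\hat u$ from above by a spatial function $\psi$ plus a time penalty can at best produce $L^\ast[\psi](p_\delta)\ge c$ along a sequence $p_\delta\to p_0$, i.e.\ $L^\ast[\hat u(\cdot,t)]\ge c$ in the viscosity sense. Example~\ref{positive} shows explicitly that $L^\ast[f]>0$ (even combined with the absence of local maxima) does \emph{not} imply h-quasiconvexity. There is no mechanism in the PDE to upgrade $L^\ast$ to $L$ or $\ol{L}$: at a characteristic test point the subsolution inequality controls only the \emph{largest} eigenvalue of $(\nabla_H^2\psi)^\star$, while h-quasiconvexity via Theorem~\ref{sufficient1-heis} requires control of the \emph{smallest} one. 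A secondary problem is that (CP) as stated demands both competitors to equal a constant outside a compact set, and $\hat u_0$ is not assumed coercive (only \eqref{eq growth}, an upper bound), so neither the truncations $\min\{\hat u_0,n\}$ nor the candidate barrier $\hat u_0+ct$ fit the hypotheses of (CP); your step~(i) therefore also lacks a comparison tool.

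The paper avoids both issues by replacing the continuous equation with the discrete game of \cite{FLM1}. The dynamic programming principle \eqref{dpp} carries an outer $\min_{|v|=1}$; iterating it yields the monotonicity \eqref{eq:mono-game} directly (Proposition~\ref{prop time-mono}), with no appeal to (CP) for unbounded data. Then, in Proposition~\ref{prop uni-qc}, one tests the relaxed upper limit $\hat U^t$ by $\varphi$, picks approximate maximizers $p_\vep$, and applies the DPP together with a Taylor expansion at $p_\vep$: the outer $\min_{|v|=1}$ forces $\la(\nabla_H^2\varphi)^\star(p_\vep)v_h,v_h\ra\ge 2\lambda$ for \emph{every} unit $v_h$ when $\nabla_H\varphi(p_\vep)=0$, so one obtains $L[\varphi](p_\vep)\ge 2\lambda$ with the genuine operator $L$, and hence $\ol{L[\varphi]}(p_0)\ge 2\lambda$ after $\limsup$. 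This is precisely the input to Theorem~\ref{sufficient1-heis}. Finally $u(\cdot,t)=\min\{\hat U^t,C\}$ is read off from $u_\vep=\min\{\hat u_\vep,C\}$ at the game level, so no separate PDE identification is needed. In short, the $\min$--$\max$ structure of the game is what bridges the gap between the sub-Riemannian curvature operator and the strong form of the h-quasiconvexity operator; a purely PDE comparison argument cannot.
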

In this result we take $u_0$ to be a truncation of a uniformly h-quasiconvex function $\hat{u}_0$ satisfying the growth condition \eqref{eq growth}. For more general h-quasiconvex initial data, we need to approximate them by truncated uniformly h-quasiconvex functions that satisfy the assumptions on $u_0$ in Theorem \ref{cor uni-qc2}. A more precise description about such generalization is presented in Theorem \ref{thm qc}. 

Our rigorous proof of Theorem \ref{cor uni-qc2} employs a game-based approximation for the horizontal curvature flow established in \cite{FLM1}, which assists us in tracking the spatial h-quasiconvexity of the approximate solution throughout the evolution. This game-theoretic interpretation is a sub-Riemannian generalization of that proposed by Kohn and Serfaty \cite{KS1} in the Euclidean space.

As our goal is to study the h-convexity preserving of set evolution of horizontal curvature flow starting from a given h-convex set $E_0$, when applying Theorem \ref{thm qc} we face another important question about the existence of h-quasiconvex function $u_0\in C(\H)$ that satisfies \eqref{initial zero} as well as the assumptions in Theorem \ref{thm qc}. In the Euclidean space, one can resolve this issue by simply taking $u_0$ to be the signed Euclidean distance to $E_0$, which serves as a quasiconvex defining function for $E_0$. The situation in the Heisenberg group is different. 
The distance function to an h-convex set $E$ is not necessarily h-quasiconvex. Even when $E=\{0\}$, it is well-known that if we use the Carnot-Carath\'eodory (CC) metric in $\H$, the sublevel sets of the CC-distance $d_{CC}(\cdot, 0)$, which are CC-balls, are not h-convex in $\H$. 

In Section \ref{sec:initial}, we give an affirmative answer to the existence problem of h-quasiconvex $u_0\in C(\H)$ under an additional star-shaped assumption on $E_0$ and a uniform h-convexity condition on $E_0$. An overview about star-shaped sets in Carnot groups is given in \cite{DrFi}. Our construction is based on a Minkowski-type functional for $E_0$. 
One can further utilize the approximation of $u_0$ as introduced in Theorem \ref{thm qc} to handle more general h-convex initial sets $E_0$. We discuss a special case of rotationally symmetric surface evolution in Proposition \ref{prop starshape sym} and Proposition \ref{prop initial approx}, where more specific assumptions on the initial value are provided. Concrete examples are also presented at the end of Section \ref{sec:initial}. 

\subsection{Notations}
We conclude the introduction with more notations that will be used in the work. We will write $|\cdot |_G$ to denote the Kor\'{a}nyi gauge, i.e., for $p=(x, y, z)\in \H$
\begin{equation}\label{gauge}
|p|_G=\left((x^2+y^2)^2+16z^2\right)^{\frac{1}{4}}.
\end{equation}
The Kor\'{a}nyi gauge induces a left invariant metric $d_H$ on $\H$ with
\[
d_H(p, q)=|p^{-1}\cdot q|_G\quad p, q\in \H.
\]


We denote by $B_r(p)$ the open gauge ball in $\H$ centered at $p\in \H$ with radius $r>0$; namely,  
\[
B_r(p)=\{q\in \H: |p^{-1}\cdot q|_G< r\}.
\]
Let $\delta_\lambda$ denote the non-isotropic dilation in $\H$ with $\lambda\geq 0$, that is, $\delta_\lambda(p)=(\lambda x, \lambda y, \lambda^2 z)$ for $p=(x, y, z)\in \H$. 
We write $\delta_\lambda(E)$ to denote the dilation of a given set $E\subset \H$, that is, 
\[
\delta_\lambda(E)=\{\delta_\lambda(p): p\in E\}.
\]

\subsection*{Acknowledgments}
The work of QL was supported by JSPS Grant-in-Aid for Scientific 
Research (No.~19K03574, No.~22K03396).  
The work of XZ was supported by JSPS Grant-in-Aid for Research Activity Start-up (No.~20K22315) and JSPS Grant-in-Aid for Early-Career Scientists (No.~22K13947).

\section{Second order quasiconvexity operator in the Euclidean space}\label{sec:eucl}


In order for our comparison with the sub-Riemannian setting, let us include a review of the Euclidean results \cite{BGJ13} on the connection between quasiconvex functions $f$ and the sign of $L_0(\nabla f, \nabla^2 f)$, where $L_0: \mathbb{R}^n\times \S^n\to \mathbb{R}$ is defined by
\begin{equation}\label{basic operator}
L_0(\xi, X)=\min\{\langle X\eta, \eta\rangle: \eta\in \mathbb{R}^n, |\eta|=1, \langle\xi,\eta\rangle=0\}.
\end{equation}
Here $\S^n$ denotes the set of $n\times n$ symmetric matrices. Recall that a function $f$ on a convex set $\Omega\subset \R^n$ is quasiconvex if
\begin{equation}
f(\lambda x + (1 - \lambda) y) \le \max\{f(x), f(y)\}, \quad \forall \, x, y \in \Omega, \, 0 < \lambda < 1,
\end{equation}
which is equivalent to the requirement that all sublevel sets of $f$ are convex.

For a given function $f:\Omega\to \mathbb{R}$, let $L_{eucl}[f]=L_0(\nabla f, \nabla^2 f)$, i.e., for $x\in \Omega$, $L_{eucl}[f](x)$ is given as in \eqref{sec operator}.

Barron, Goebel and Jensen use this operator to establish necessary conditions and sufficient conditions for quasiconvex functions \cite{BGJ13}. 





\begin{defi}[Subsolutions associated to quasiconvexity operator]\label{sub}
A locally bounded function $f\in USC({\Omega})$ is said to be a viscosity subsolution of $L_{eucl}[f]\ge 0$ (resp., $L_{eucl}[f]> 0$) in $\Omega$, if whenever $f-\varphi$ achieves a strict local maximum at $\hat{x}\in \Omega$ for a smooth function $\varphi:\Omega\to \mathbb{R}$, we have
\begin{equation}\label{eq sub}
-L_{eucl}[\varphi](\hat{x})\le 0 \quad (\text{resp., } \ \  -L_{eucl}[\varphi](\hat{x})<0).
\end{equation}
\end{defi}

The following results, Theorems \ref{necessary}--\ref{localmax}, are taken from \cite{BGJ13}. 
\begin{thm}[{\cite[Theorem 2.6]{BGJ13}}]\label{necessary}
Let $\Omega$ be a convex open set in $\R^n$. If $f\in USC(\Omega)$ is quasiconvex in $\Omega$, then $f$ is a viscosity subsolution of $L_{eucl}[f]\ge 0$ in $\Omega$. 
\end{thm}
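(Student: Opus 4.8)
The plan is to prove the contrapositive-free direction directly: assume $f$ is quasiconvex and show that at any point $\hat x$ where $f-\varphi$ has a strict local maximum for a smooth test function $\varphi$, the inequality $-L_{eucl}[\varphi](\hat x)\le 0$ holds, i.e. $L_0(\nabla\varphi(\hat x),\nabla^2\varphi(\hat x))\ge 0$. First I would reduce to the nondegenerate case: if $\nabla\varphi(\hat x)=0$ then $L_{eucl}[\varphi](\hat x)=\min\{\langle\nabla^2\varphi(\hat x)\eta,\eta\rangle:|\eta|=1\}$, and the claim is the classical fact that $f-\varphi$ having a local max at $\hat x$ with $\nabla\varphi(\hat x)=0$ forces $\nabla^2\varphi(\hat x)\ge 0$ on \emph{some} subspace — actually this case still needs the quasiconvexity input unless $\hat x$ is a genuine interior max of $f$; so I would treat it together with the main case by an argument on the level hyperplane. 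So assume $\nabla\varphi(\hat x)\ne 0$ (the degenerate case handled by a limiting/perturbation argument or separately as in \cite{BGJ13}). Let $\eta$ be a unit vector realizing the minimum in \eqref{sec operator}, so $\eta\perp\nabla\varphi(\hat x)$ and $\langle\nabla^2\varphi(\hat x)\eta,\eta\rangle=L_{eucl}[\varphi](\hat x)$.

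The heart of the argument is to use quasiconvexity along the segment through $\hat x$ in direction $\eta$. Consider the one-variable function $g(s)=\varphi(\hat x+s\eta)$ for small $s$. Since $f-\varphi$ has a strict local max at $\hat x$, for $x=\hat x+s\eta$ and $x=\hat x-s\eta$ we have $f(\hat x\pm s\eta)\le f(\hat x)-g(0)+g(\pm s)$ with strict inequality for $s\ne 0$ small; more precisely $f(\hat x\pm s\eta)-\varphi(\hat x\pm s\eta)<f(\hat x)-\varphi(\hat x)$. The plan is to exploit that $\hat x$ is the midpoint of $[\hat x-s\eta,\hat x+s\eta]$, so quasiconvexity gives $f(\hat x)\le\max\{f(\hat x-s\eta),f(\hat x+s\eta)\}$. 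Combining with the test-function inequalities at the two endpoints and rearranging, one obtains $0<g(s)+g(-s)-2g(0)$ for all small $s\ne 0$ — or rather one shows the \emph{failure} of $L_{eucl}[\varphi](\hat x)\ge 0$, i.e. $g''(0)<0$, would make $g(s)+g(-s)-2g(0)<0$ for small $s$, contradicting the combined inequality. Taylor expanding $g$ to second order, $g(s)+g(-s)-2g(0)=g''(0)s^2+o(s^2)=\langle\nabla^2\varphi(\hat x)\eta,\eta\rangle s^2+o(s^2)$, and this being $\ge 0$ (in the limit, after dividing by $s^2$) yields $L_{eucl}[\varphi](\hat x)=\langle\nabla^2\varphi(\hat x)\eta,\eta\rangle\ge 0$, which is exactly $-L_{eucl}[\varphi](\hat x)\le 0$.

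The one subtlety I would be careful about is that $\hat x\pm s\eta$ need not have $f$-values ordered the way a naive estimate wants; the quasiconvexity bound $f(\hat x)\le\max\{f(\hat x+s\eta),f(\hat x-s\eta)\}$ only controls the larger of the two, so I would split into the cases according to which endpoint achieves the max, and in each case use the strict test-function inequality at \emph{that} endpoint together with a possibly weaker bound at the other. Actually the clean way: from the strict max, $\varphi(\hat x\pm s\eta)-\varphi(\hat x)>f(\hat x\pm s\eta)-f(\hat x)$; hence $g(s)-g(0)>f(\hat x+s\eta)-f(\hat x)$ and $g(-s)-g(0)>f(\hat x-s\eta)-f(\hat x)$, so $g(s)+g(-s)-2g(0)>[f(\hat x+s\eta)-f(\hat x)]+[f(\hat x-s\eta)-f(\hat x)]$, and since at least one bracket on the right is $\ge 0$ by quasiconvexity... but the other could be very negative. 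So the simple addition does not immediately work, and this is the main obstacle. The standard fix (as in \cite{BGJ13}) is to not move symmetrically: instead pick the endpoint, say $\hat x+s\eta$, and write $\hat x$ as a convex combination of $\hat x+s\eta$ and a point $\hat x-t\eta$ with $t$ chosen (depending on $s$) so that $f(\hat x-t\eta)\le f(\hat x+s\eta)$, which is possible by upper semicontinuity and the fact that $f(\hat x+s\eta)\to f(\hat x)$... — more robustly, one uses that $\hat x$ is a strict max of $f-\varphi$ to perturb $\varphi$ by a small linear term, reducing to the case $\nabla\varphi(\hat x)$ in a chosen direction, and then runs the midpoint convexity argument; alternatively one invokes that $L_0$ tested against a paraboloid touching from below is what the viscosity definition really needs. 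I would follow \cite{BGJ13}'s device of adding $\epsilon|x-\hat x|^2$-type corrections or a linear tilt to make the two endpoint values comparable, carry through the Taylor expansion, and then let $\epsilon\to0$ and $s\to0$ in the right order; the bookkeeping of these two limits, plus the characteristic-direction constraint $\langle\nabla\varphi(\hat x),\eta\rangle=0$ being preserved under the perturbation, is where the real care is needed.
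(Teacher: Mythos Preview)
Your core idea is exactly the paper's (see the proof of the Heisenberg analogue, Theorem~\ref{strict heis}, which the paper does write out): argue by contradiction, pick a unit vector $\eta$ with $\langle\nabla\varphi(\hat x),\eta\rangle=0$ and $\langle\nabla^2\varphi(\hat x)\eta,\eta\rangle=-c<0$, Taylor-expand along the segment in direction $\eta$, and use quasiconvexity at the midpoint. But you then talk yourself out of the clean finish and propose unnecessary machinery.

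The point you are not exploiting is that the orthogonality $\langle\nabla\varphi(\hat x),\eta\rangle=0$ kills the first-order term in \emph{both} expansions, so
\[
\varphi(\hat x\pm s\eta)=\varphi(\hat x)-\tfrac{c}{2}s^2+o(s^2).
\]
Combined with the test-function inequality $f(\hat x\pm s\eta)\le f(\hat x)+\varphi(\hat x\pm s\eta)-\varphi(\hat x)$, this gives $f(\hat x+s\eta)<f(\hat x)$ \emph{and} $f(\hat x-s\eta)<f(\hat x)$ simultaneously for small $s\neq 0$. Hence $\max\{f(\hat x+s\eta),f(\hat x-s\eta)\}<f(\hat x)$, which directly contradicts quasiconvexity. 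There is no need to compare the two endpoint values, no need to add the two inequalities (your sum $g(s)+g(-s)-2g(0)$ is not what quasiconvexity controls), and no need for asymmetric endpoints, linear tilts, or $\epsilon|x-\hat x|^2$ corrections. The degenerate case $\nabla\varphi(\hat x)=0$ needs no separate treatment either: the constraint $\langle\nabla\varphi(\hat x),\eta\rangle=0$ is then vacuous, the minimizing $\eta$ is an eigenvector for the least eigenvalue, and the identical Taylor argument applies since $g'(0)=0$ automatically.
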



As pointed out in \cite[Example 1.1]{BGJ13}, in general the viscosity inequality $L_{eucl}[f]\ge 0$ is only a necessary condition and does not imply quasiconvexity of $f$. For example, one can easily verify that $f(x)=-x^4$ for $x\in \mathbb{R}$ is not quasiconvex (but quasiconcave) and satisfies $L_{eucl}[f]\ge 0$ in $\R$. The following result gives a sufficient condition for quasiconvexity in $\mathbb{R}^n$. 
\begin{thm}[{\cite[Theorem 2.7]{BGJ13}}]\label{strict}
Let $\Omega$ be a convex open set in $\R^n$. If $f\in USC(\Omega)$ is a viscosity subsolution of $L_{eucl}[f]>0$ in $\Omega$, then $f$ is quasiconvex in $\Omega$. 
\end{thm}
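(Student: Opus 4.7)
The plan is to deduce Theorem \ref{strict} from Theorem \ref{localmax}, which asserts that any $f \in USC(\Omega)$ that is a viscosity subsolution of $L_{eucl}[f] \geq 0$ and has no interior local maximum in $\Omega$ must be quasiconvex. Since a viscosity subsolution of the strict inequality $L_{eucl}[f] > 0$ is \emph{a fortiori} a subsolution of the non-strict $L_{eucl}[f] \geq 0$, the entire content of the argument reduces to a single claim: if $f \in USC(\Omega)$ is a viscosity subsolution of $L_{eucl}[f] > 0$ in $\Omega$, then $f$ admits no interior local maximum in $\Omega$.

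I argue the claim by contradiction. Suppose $f$ attains a local maximum at some $\hat x \in \Omega$. A naive choice of test function such as the quadratic perturbation $\psi(x) = f(\hat x) + \epsilon |x - \hat x|^2$ touches $f$ from above at $\hat x$, but its $L_{eucl}$ at $\hat x$ equals $2\epsilon > 0$, which is perfectly compatible with the subsolution condition and yields no contradiction. The key trick is to use a flatter, higher-order perturbation so that the test function has simultaneously vanishing gradient \emph{and} vanishing Hessian at $\hat x$. Concretely I take
\[
\varphi(x) = f(\hat x) + |x - \hat x|^4.
\]
Since $f(x) \leq f(\hat x)$ for $x$ near $\hat x$ and $|x - \hat x|^4 > 0$ for $x \neq \hat x$, the function $f - \varphi$ attains a strict local maximum at $\hat x$, so the viscosity subsolution inequality of Definition \ref{sub} demands $-L_{eucl}[\varphi](\hat x) < 0$. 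A direct computation gives $\nabla \varphi(\hat x) = 0$ and $\nabla^2 \varphi(\hat x) = 0$, and hence by the definition \eqref{basic operator}
\[
L_{eucl}[\varphi](\hat x) = \min_{|\eta| = 1} \langle 0 \cdot \eta, \eta \rangle = 0,
\]
which violates the required strict inequality $L_{eucl}[\varphi](\hat x) > 0$ and yields the desired contradiction.

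With the absence of interior local maxima established, Theorem \ref{localmax} applies and delivers the quasiconvexity of $f$ in $\Omega$, completing the proof. The main obstacle of the plan lies precisely in the local maximum step above: the standard quadratic penalization used throughout Crandall--Ishii--Lions viscosity theory carries a strictly positive value of $L_{eucl}$ at the touching point and therefore cannot contradict the strict inequality $L_{eucl}[f] > 0$. The correct device is to exploit the gap in the strict inequality by choosing a perturbation that vanishes to second order at $\hat x$, for which the quartic $|x - \hat x|^4$ is the simplest option; the same idea can be expected to play an analogous role in the Heisenberg setting pursued later in the paper.
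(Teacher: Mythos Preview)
Your argument is correct: the quartic test function $\varphi(x)=f(\hat x)+|x-\hat x|^4$ indeed forces a strict local maximum of $f-\varphi$ at $\hat x$ with $\nabla\varphi(\hat x)=0$ and $\nabla^2\varphi(\hat x)=0$, so $L_{eucl}[\varphi](\hat x)=0$ contradicts the strict viscosity inequality. The reduction to Theorem~\ref{localmax} is then legitimate, and there is no circularity since the proof of Theorem~\ref{localmax} (and of its refinement Theorem~\ref{localmax2}) does not invoke Theorem~\ref{strict}.

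Your route, however, is different from the one in \cite{BGJ13} that the paper follows (visible in the proof of the Heisenberg analogue, Theorem~\ref{sufficient1-heis}). There the argument is direct: assuming quasiconvexity fails along a segment, one builds a test function that is flat in the segment direction but quartic in the transverse directions, shows $f-\varphi$ attains an interior maximum, and then plugs the segment direction as $\eta$ to obtain $L_{eucl}[\varphi]\le 0$ at that point. Your approach is shorter once Theorem~\ref{localmax} is in hand and neatly isolates the role of the strict inequality (it forbids local maxima), but it leans on the harder Theorem~\ref{localmax}. The direct barrier construction is self-contained and, crucially, transfers to the Heisenberg group. On that last point your closing remark is off target: the paper explicitly states that the sub-Riemannian analogue of Theorem~\ref{localmax} is open, so your reduction strategy is not presently available in $\H$; the Heisenberg sufficient condition is obtained instead via the direct construction.
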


Another sufficient condition in \cite{BGJ13} with the weaker inequality $L_{eucl}[f]\ge 0$ is as below. 

\begin{thm}[{\cite[Theorem 2.8]{BGJ13}}]\label{localmax}
Let $\Omega$ be a convex open set in $\R^n$. If $f\in USC(\Omega)$ is a viscosity subsolution of $L_{eucl}[f]\ge0$ in $\Omega$ and $f$ does not attain a local maximum, then $f$ is quasiconvex in $\Omega$. 
\end{thm}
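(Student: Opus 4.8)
The plan is to argue by contradiction, reducing non-quasiconvexity to a one-dimensional excursion and then exploiting the interplay between the viscosity inequality $L_{eucl}[f]\ge 0$ and the absence of local maxima. Recall that $f$ is quasiconvex exactly when every sublevel set $\{f<c\}$ is convex (equivalently, every $\{f\le c\}$ is convex, a monotone union resp.\ an intersection of convex sets being convex). Assume $f$ is not quasiconvex. Then, after slightly lowering a threshold to turn a weak inequality into a strict one, there are a level $c$, points $p,q\in\Omega$ with $f(p),f(q)<c$, and a point $z$ on the open segment $(p,q)$ with $f(z)>c$. Writing $g(t)=f((1-t)p+tq)$, the function $g$ is upper semicontinuous on $[0,1]$ with $g(0),g(1)<c<M:=\max_{[0,1]}g$, so $M$ is attained at an interior parameter $\tau$. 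Localizing, I would replace $[p,q]$ by the smallest closed subsegment containing the compact set $\{t\in[0,1]:g(t)\ge c'\}$, where $c':=\tfrac{c+M}{2}$, so that afterwards $f$ still exceeds $c'$ somewhere on the segment while $f<c'$ on the two adjacent portions of the line.

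If $n=1$ this already contradicts the hypotheses without using $L_{eucl}[f]\ge 0$ at all: $g$ attains its interior maximum $M$ over the compact interval at an interior point, which is then a genuine local maximum of $f$. The difficulty in dimension $n\ge 2$ is that the superlevel set $\{f\ge c'\}$ may reconnect across the chord $[p,q]$ without any local maximum being visible along a single line; this is precisely where $L_{eucl}[f]\ge 0$ must enter. Heuristically it forces the sublevel sets of $f$ to be locally convex wherever $\nabla f\ne 0$ (level hypersurfaces having nonnegative principal curvatures toward increasing $f$), so a bulge of $\{f\ge c'\}$ across $[p,q]$ can only be capped off at points where $\nabla f$ degenerates --- exactly the mechanism behind the counterexample $f=-|x|^{4}$, which satisfies $L_{eucl}[f]\ge 0$ but fails quasiconvexity and is excluded here only because it has a local maximum. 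The aim is therefore to show that, under the present hypotheses, such a capping cannot occur.

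To implement this rigorously I would upgrade $L_{eucl}[f]\ge 0$ to a strict inequality and invoke Theorem~\ref{strict}. On a bounded convex $\Omega'\Subset\Omega$ containing $[p,q]$, fix $\psi\in C^{\infty}$ uniformly convex on $\overline{\Omega'}$ with $\nabla\psi$ nowhere zero there (for instance $\psi(x)=|x-a|^{2}$ with $a\notin\overline{\Omega'}$) and set $f_{\epsilon}:=f+\epsilon\psi$. Then $f_{\epsilon}\in USC(\Omega')$, $f_{\epsilon}\to f$ uniformly on $\overline{\Omega'}$, and for small $\epsilon$ the function $f_{\epsilon}$ is still not quasiconvex since the strict inequalities defining the excursion are stable. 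The crux is that $f_{\epsilon}$ is a viscosity subsolution of $L_{eucl}[f_{\epsilon}]>0$ in $\Omega'$: at a contact point $\hat x$ of a smooth test function $\varphi$, apply the subsolution property of $f$ to $\varphi-\epsilon\psi$ to obtain $L_{0}\!\left(\nabla\varphi(\hat x)-\epsilon\nabla\psi(\hat x),\ \nabla^{2}\varphi(\hat x)-\epsilon\nabla^{2}\psi(\hat x)\right)\ge 0$; where the perturbed gradient is bounded below one recovers a positive quantity of order $\epsilon$ from the uniform convexity of $\psi$, and in the remaining small-gradient regime one invokes the absence of local maxima of $f$ to exclude the contact configuration entirely. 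Granting this, Theorem~\ref{strict} forces $f_{\epsilon}$ to be quasiconvex --- a contradiction --- and letting $\epsilon\to 0$ along an exhaustion of $\Omega$ by convex $\Omega'$ yields quasiconvexity of $f$.

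The main obstacle is this strictification step. The map $\xi\mapsto L_{0}(\xi,X)$ is discontinuous at $\xi=0$, where it collapses to $\lambda_{\min}(X)$, and under the perturbation the constraint hyperplane $\{\nabla\varphi(\hat x)\}^{\perp}$ tilts by an amount that is not controlled by the a priori unbounded norm of $\nabla^{2}\varphi(\hat x)$; consequently a convex Hessian correction does not by itself turn $L_{eucl}[f]\ge 0$ into $L_{eucl}[f_{\epsilon}]>0$ near near-degenerate contact points. Taming these points is exactly where the ``no local maximum'' hypothesis is indispensable --- it is what separates the present statement from the false assertion that $L_{eucl}[f]\ge 0$ alone implies quasiconvexity --- and carrying out the corresponding case analysis, together with the upper-semicontinuity and viscosity bookkeeping, is the real content of the proof.
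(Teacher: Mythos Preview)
Your proposal is an honest sketch, but the gap you yourself flag is real and is not closed. The strictification route --- perturbing $f$ to $f_\epsilon=f+\epsilon\psi$ and hoping $L_{eucl}[f_\epsilon]>0$ --- runs into exactly the obstruction you name: passing from $L_0(\xi_0,X_0)\ge 0$ with $\xi_0=\nabla\varphi(\hat x)-\epsilon\nabla\psi(\hat x)$ to $L_0(\xi_1,X_1)>0$ with $\xi_1=\nabla\varphi(\hat x)$ requires comparing minima over two \emph{different} constraint hyperplanes, and the discrepancy scales like $\|X_0\|\cdot|\xi_1-\xi_0|/|\xi_1|$, which is not controlled uniformly over test functions. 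Your suggestion to ``invoke the absence of local maxima of $f$ to exclude the contact configuration entirely'' in the small-gradient regime is not made precise, and it is not clear how to do so: a strict local maximum of $f_\epsilon-\varphi$ at a point with small $|\nabla\varphi|$ does not translate into a local maximum of $f$ in any obvious way. As written, the argument does not go through, and it is not evident that this perturbation strategy can be completed.

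The paper (following \cite{BGJ13}) takes a completely different and more direct route. After the same affine normalisation to a segment along the $x_1$-axis, one tests $f$ against the explicit family
\[
\varphi_m(x)=\frac{1}{m}(2-x_1^2)(x_2^m+\cdots+x_n^m)
\]
on a box $K=[-1,1]\times[-2,2]^{n-1}$. For large even $m$ the maximum of $f-\varphi_m$ over $K$ is attained at an interior point $\hat x$; the ``no local maximum'' hypothesis is used precisely here, to rule out $\hat x$ lying on the $x_1$-axis (where $\varphi_m$ and all its low-order derivatives vanish, so such a $\hat x$ would be a local maximum of $f$ itself). Once $\hat x_2^2+\cdots+\hat x_n^2\neq 0$ one has $\nabla\varphi_m(\hat x)\neq 0$, and an explicit choice of $\eta$ orthogonal to $\nabla\varphi_m(\hat x)$ yields $\langle\nabla^2\varphi_m(\hat x)\eta,\eta\rangle<0$ by direct computation, contradicting the viscosity inequality. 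The point is that the singularity of $L_0$ at $\xi=0$ is avoided not by perturbation but by designing the test function so that the no-local-maximum hypothesis forces $\nabla\varphi_m(\hat x)\neq 0$.
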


\begin{rmk}[Characterization with upper semicontinuous envelop]\label{rmk weak}
We would like to mention that 
these results in \cite{BGJ13} still hold even if one weakens the definition of viscosity subsolutions of $L_{eucl}[f]\geq 0$ by replacing the inequality $-L_{eucl}[\varphi](\hat{x})\le 0$ at the maximizer $\hat{x}$ of $f-\varphi$ in $\Omega$ by
\begin{equation}\label{eq sub-weak}
-L_{eucl}^\ast[\varphi](\hat{x})\leq 0,
\end{equation}
where
$$L_{eucl}^\ast[f](x)=L_0^*(\nabla f(x), \nabla^2 f(x)).$$
Here $L_0^\ast$ denotes the upper semicontinuous envelope of the elliptic operator $L$, that is, 
\[
L_0^*(\xi, X)=\limsup_{\zeta\to \xi, Y\to X} L_0(\zeta, Y)=
\begin{cases}
L_0(\xi, X) & \text{if $\xi\neq 0$,}\\
\limsup_{\zeta\to 0, Y\to X} L_0(\zeta, Y)& \text{if $\xi= 0$}.
\end{cases}
\]
 It is clear that $L_{eucl}[f]\le L_{eucl}^\ast[f]$ in $\Omega$ for any $f\in USC(\Omega)$. 
It is a standard treatment in the viscosity solution theory to define solutions of discontinuous equations by adopting such semicontinuous envelopes of the operators; consult for example \cite{CIL, GBook}. 
Below let us give more details on this observation through the proof of Theorem \ref{localmax} (\cite[Theorem 2.7]{BGJ13}). Proofs for other Euclidean results with the relaxed definition of solutions are omitted here.
\end{rmk}

\begin{thm}[An improved sufficient condition for quasiconvexity]\label{localmax2}
Let $\Omega$ be a convex open set in $\R^n$. Suppose that $f\in USC(\Omega)$ satisfies $L_{eucl}^\ast[f]\geq 0$ in $\Omega$ in the sense that \eqref{eq sub-weak} holds whenever $f-\varphi$ attains a strict local maximum at $\hat{x}\in \Omega$ for a smooth function $\varphi: \Omega\to \R$. If $f$ does not attain a local maximum, then $f$ is quasiconvex in $\Omega$. 
\end{thm}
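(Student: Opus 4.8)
The plan is to argue by contradiction, following the strategy of the classical sufficiency proof (Theorem~\ref{strict}) but paying careful attention to the characteristic set where the test function's gradient vanishes, which is where passing to the envelope $L_{eucl}^\ast$ could a priori cause trouble. So suppose $f$ is not quasiconvex. After an affine change of coordinates we may assume there are collinear points $y=(y_1,0,\dots,0)$, $w=(w_1,0,\dots,0)$, $z=(z_1,0,\dots,0)$ in $\Omega$ with $y_1<w_1<z_1$ and $f(y)\le f(z)<f(w)$; by convexity of $\Omega$ we may also fix $r>0$ so that the cylinder $Q:=[y_1,z_1]\times \overline{B_r(0)}\subset\Omega$, where $\overline{B_r(0)}\subset\R^{n-1}$ denotes the ball in the last $n-1$ variables.

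Next I would introduce the test function $\varphi(x)=f(w)+\tfrac{K}{2}(x_2^4+\cdots+x_n^4)+\delta\,\psi(x_1)$ where $\psi$ is a small bump supported near the segment that makes the maximum of $f-\varphi$ over $Q$ strict and interior; here $K$ is chosen large enough (and $\delta$ small enough) that $f-\varphi<0$ on the part of $\partial Q$ with $(x_2,\dots,x_n)\ne 0$ while $f-\varphi>0$ somewhere on the open segment, since $(f-\varphi)(w)=0-\delta\psi(w_1)$ can be made positive. This forces the strict maximizer $\hat x=(\hat x_1,\dots,\hat x_n)$ of $f-\varphi$ over $Q$ to lie in the interior of $Q$, and moreover — using $f(y)\le f(z)<f(w)$ together with the shape of $\psi$ — not at an endpoint in the $x_1$-direction, so $\hat x$ is a strict local maximum of $f-\varphi$ in $\Omega$. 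Since $f$ attains no local maximum, $\varphi$ is nonconstant near $\hat x$, so in particular $\hat x$ is not simultaneously a point where $\nabla\varphi=0$ and $\hat x_2=\cdots=\hat x_n=0$; the hypothesis then gives $-L_{eucl}^\ast[\varphi](\hat x)\le 0$, i.e. $L_0^\ast(\nabla\varphi(\hat x),\nabla^2\varphi(\hat x))\ge 0$.

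Now I would derive the contradiction by exhibiting a unit vector $\eta$ in (or arbitrarily close to) the orthogonal complement of $\nabla\varphi(\hat x)$ along which $\langle\nabla^2\varphi(\hat x)\eta,\eta\rangle<0$, which makes $L_0^\ast(\nabla\varphi(\hat x),\nabla^2\varphi(\hat x))<0$. Here $\nabla^2\varphi$ is block-diagonal: $\delta\psi''(\hat x_1)$ in the $x_1$ slot and $\operatorname{diag}(6K\hat x_2^2,\dots,6K\hat x_n^2)$ in the rest. If $\hat x_1$ is an interior maximizer of $t\mapsto f(t,\hat x_2,\dots,\hat x_n)-\varphi$, standard viscosity/second-derivative reasoning at $\hat x$ forces $\psi''(\hat x_1)\le 0$, actually we arrange $\psi''<0$ on the relevant range by design (e.g. $\psi(t)=-(t-\bar t)^2$ near the segment), so $e_1$ is a descent direction for the Hessian. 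Since $\partial\varphi/\partial x_1(\hat x)=\delta\psi'(\hat x_1)$ need not vanish, I would instead use a vector of the form $\eta=(\cos\theta)e_1+(\sin\theta)e_j$ with $\theta$ small and $j$ chosen with $\hat x_j=0$ if possible (so that the $j$-th Hessian entry is $0$), or more robustly invoke the envelope: when $\nabla\varphi(\hat x)=0$ every unit vector is admissible so $\langle\nabla^2\varphi(\hat x)e_1,e_1\rangle=\delta\psi''(\hat x_1)<0$ directly contradicts $L_0^\ast\ge 0$, and when $\nabla\varphi(\hat x)\ne 0$ one takes an admissible $\eta$ spanning mostly the $e_1$ direction and the $x_j$-directions with $\hat x_j$ small, keeping $\langle\nabla^2\varphi(\hat x)\eta,\eta\rangle$ negative.

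The main obstacle is precisely this last step at a characteristic maximizer: one must show the geometry of $\varphi$ still produces a strictly negative value of $L_0^\ast$ even after the $\limsup$ defining the envelope is taken, so the $x_1$-perturbation $\delta\psi$ with $\psi''<0$ must be chosen carefully enough that the $e_1$-direction (or a small rotation of it transverse to $\nabla\varphi(\hat x)$) remains a direction of strict negativity for $\nabla^2\varphi(\hat x)$ while the orthogonality constraint $\langle\nabla\varphi(\hat x),\eta\rangle=0$ is met to leading order; Remark~\ref{rmk weak} indicates this is exactly the point to expand on, and the fourth-power choice of the transverse variables (so that $\nabla^2\varphi$ vanishes there whenever the gradient does) is what makes it work.
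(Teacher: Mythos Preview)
Your approach has a genuine gap, and it stems from the choice of a \emph{separable} test function $\varphi(x)=f(w)+\tfrac{K}{2}(x_2^4+\cdots+x_n^4)+\delta\psi(x_1)$. With this $\varphi$ the Hessian is block-diagonal, so the only candidate negative direction is $e_1$. But $e_1$ is orthogonal to $\nabla\varphi(\hat{x})$ only when $\psi'(\hat{x}_1)=0$, which you cannot force. Your proposed fix of rotating to $\eta=(\cos\theta)e_1+(\sin\theta)e_j$ with $\hat{x}_j=0$ does not help: the orthogonality condition then reads $\cos\theta\cdot\delta\psi'(\hat{x}_1)=0$, so either $\psi'(\hat{x}_1)=0$ (unknown) or $\theta=\pi/2$ (losing the negative $e_1$-contribution). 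And if instead some $\hat{x}_j\neq 0$, the constraint introduces a positive term $6K\hat{x}_j^2 b_j^2$ into $\langle\nabla^2\varphi(\hat{x})\eta,\eta\rangle$ that you cannot uniformly beat as $\hat{x}_j\to 0$.

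The characteristic case is even more problematic. You assert that when $\nabla\varphi(\hat{x})=0$, the inequality $\langle\nabla^2\varphi(\hat{x})e_1,e_1\rangle<0$ already contradicts $L_0^\ast\ge 0$. This is incorrect: $L_0^\ast(0,X)=\limsup_{\zeta\to 0}L_0(\zeta,X)$ equals the \emph{second smallest} eigenvalue $\lambda_2(X)$ (by Courant--Fischer, since $L_0(\zeta,X)$ is the minimum of the quadratic form over the hyperplane $\zeta^\perp$). In your situation $\nabla\varphi(\hat{x})=0$ forces $\hat{x}_2=\cdots=\hat{x}_n=0$ and $\psi'(\hat{x}_1)=0$, whence $\nabla^2\varphi(\hat{x})=\mathrm{diag}(\delta\psi''(\hat{x}_1),0,\ldots,0)$ and $L_0^\ast=\lambda_2=0$, giving no contradiction. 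Your appeal to the no-local-maximum hypothesis does not rule this case out either: with $\psi''(\hat{x}_1)<0$ the function $\varphi-\varphi(\hat{x})$ changes sign near $\hat{x}$, so a strict local maximum of $f-\varphi$ there does not make $\hat{x}$ a local maximum of $f$.

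The paper resolves both issues by a \emph{coupled} test function $\varphi_m(x)=\tfrac{1}{m}(2-x_1^2)(x_2^m+\cdots+x_n^m)$. The coupling serves two purposes. First, since $\varphi_m\ge 0$ on the box and vanishes exactly on the $x_1$-axis, the no-local-maximum assumption forces the maximizer $\hat{x}$ to satisfy $\hat{x}_2^2+\cdots+\hat{x}_n^2\neq 0$, hence $\nabla\varphi_m(\hat{x})\neq 0$ and $L^\ast_{eucl}=L_{eucl}$ there; the envelope plays no role at the end. Second, the cross-terms in $\nabla^2\varphi_m$ produced by the factor $(2-x_1^2)$ are exactly what allows one to exhibit a unit vector $\eta$ orthogonal to $\nabla\varphi_m(\hat{x})$ (namely $\eta\parallel(m(2-\hat{x}_1^2),2\hat{x}_1\hat{x}_2,\ldots,2\hat{x}_1\hat{x}_n)$) with $\langle\nabla^2\varphi_m(\hat{x})\eta,\eta\rangle<0$. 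A separable test function cannot provide this mechanism.
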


\begin{proof}
Supposing by contradiction that $f$  is not quasiconvex, by an affine change of variables and upper semicontinuity, we can 
assume there exist $w=(w_1, 0, \cdots, 0)$ with $w_1 \in (-1,1)$ such that $f(w) > f(\pm 1, [-1,1], \ldots, [-1,1])$ and 
\[
K:=[-1,1] \times [-2,2] \times \cdots \times [-2,2] \subset \Omega.
\]
Following the argument there, for sufficiently large $m$, $f-\varphi_m$ achieves a local strict maximum at an interior point $\hat{x}=(\hat{x}_1, \hat{x}_2, \ldots, \hat{x}_n)$ in $K$, where $\varphi_m$ is taken to be $$\varphi_m(x)=\frac{1}{m}(2-x_1^2)(x_2^m+\cdots+x_n^m)$$
for $x=(x_1, x_2, \ldots, x_n)\in \Omega$.
Also, we have $\hat{x}_2^2+\ldots +\hat{x}_n^2\neq 0$. In order to reach a contradiction, it remains to find $\eta$ appropriately in the definition \eqref{basic operator} of $L_0$ to get $L_{eucl}[\varphi_m](\hat{x})<0$. We choose $\eta$ to be
\[
\eta:= r_m(\hat{x}) (m(2 - \hat{x}_1^2), 2 \hat{x}_1 \hat{x}_2, \cdots, 2 \hat{x}_1 \hat{x}_n) \ne 0,
\]
where $r_m(\hat{x}) > 0$ is a normalizing constant so that $|\eta| = 1$. (Our choice of $\eta$ is slightly different from that in the original proof in \cite{BGJ13}, which does not seem to imply the desired inequality below.) By direct computations, we have $\la \nabla \varphi_m(\hat{x}), \eta\ra=0$ and
\begin{align*}
\langle \nabla^2 \varphi_m(\hat{x}) \eta, \eta\rangle = r_m(\hat{x})^2 (|\hat{x}|^2-\hat{x}_1^2)  (2 - \hat{x}_1^2)\left(-2  m (2 - \hat{x}_1^2) - 8 m \hat{x}_1^2 + 4\hat{x}_1^2 (m - 1)\right) < 0.
\end{align*}
Noticing that $\nabla \varphi_m(\hat{x}) \ne 0$, we are led to $L_{eucl}^*[\varphi_m](\hat{x})=L_{eucl}[\varphi_m](\hat{x})<0$.  
\end{proof}

As pointed out in \cite[Example 2.9]{BGJ13}, the condition that $f$ has no local maxima cannot be dropped. Note that the function {$f(x) = - (x^4 - 1)^4$} 
($x\in \R$) satisfies $L_{eucl}[f]\geq 0$ in $\R$ but $f$ is not quasiconvex in $\R$ and has a strict local maximum at $x=1$. 




\section{Horizontal quasiconvexity in the Heisenberg group}

\subsection{Second order h-quasiconvexity operator}
We next focus on h-quasiconvex functions in the Heisenberg group by generalizing the Euclidean operator $L_{eucl}$ and defining a sub-Riemannian version $L$ by \eqref{sec operator heis} for $f\in C^2(\Omega)$ and $p\in \Omega$, where $\Omega\subset \H$ is an h-convex set. We give a proof of our main result, Theorem \ref{main thm}. 

The relation between $L[f]$ and  h-quasiconvexity of $f$ has been revealed in \cite{CCP2}, but only restricted to the functions $f$ of class $C^2$. We intend to provide a further discussion for upper semicontinuous h-quasiconvex functions and point out differences from the Euclidean cases. 


Let us review the definition of h-convex sets and h-quasiconvex functions. 
\begin{defi}[H-convex sets]
A set $E \subset \H$ is said to be an h-convex set in $\H$, if for every $p \in E$ and $q \in E \cap \H_p$, the horizontal segment $[p,q] := \{\lambda p + (1 - \lambda ) q: \lambda \in [0,1]\}$ stays in $E$. 
\end{defi}

\begin{defi}[H-quasiconvex functions]\label{hqc}
Let $\Omega$ be an h-convex set in $\H$. We say a function $f$ is h-quasiconvex in $\Omega$ if 
\begin{equation}\label{def-qc}
\begin{aligned}
f(w)\leq \max & \{f(p),   f(q)\} \quad  \text{for all $p\in \Omega$, $q \in \H_p \cap \Omega$ and $w \in [p,q]$,}
\end{aligned}
\end{equation}
or, equivalently, all sublevel sets of $f$ are h-convex subsets in $\Omega$.
\end{defi}




We next extend Definition \ref{sub} to the sub-Riemannian case by replacing  $L_{eucl}$,  $\nabla \varphi, \nabla^2 \varphi$ respectively by $L$, $\nabla_H \varphi, (\nabla_H^2 \varphi)^\star$. For our later application, we present a generalized form with a general constant on the right hand side. 

\begin{defi}[Subsolutions associated to h-quasiconvexity operator]\label{sub heis s}
Let $a\in \R$. A locally bounded function $f\in USC({\Omega})$ is said to be a viscosity subsolution of $L[f]\ge a$ (resp., $L[f]> a$) in $\Omega$, if whenever $f-\varphi$ achieves a strict local maximum at $\hat{p}\in \Omega$ for a smooth function $\varphi:\Omega\to \mathbb{R}$, we have
\begin{equation}\label{eq sub heis0}
-L[\varphi](\hat{p})\le -a \quad (\text{resp., } \ \  -L[\varphi](\hat{p})<-a).
\end{equation}
We say $f$ satisfies $L[f]\geq a$ (resp., $L[f]>a$) in the viscosity sense if it is a viscosity subsolution of $L[f]\geq a$ (resp., $L[f]>a$).
\end{defi}

One can consider a weaker variant of the definition by adopting the upper semicontinuous envelope $L^\ast$ in \eqref{envelope1}. Note that $L^\ast[\varphi]$ can also be expressed via $L_0^\ast$ with horizontal gradient and Hessian for $n=2$, i.e., 
\[
L^\ast[\varphi](p):=L_0^\ast(\nabla_H \varphi(p), (\nabla_H^2\varphi)^\star(p)).
\]

\begin{defi}[Subsolutions associated to the operator envelope]\label{sub heis}
Let $a\in \R$. A locally bounded function $f\in USC({\Omega})$ is said to be a viscosity subsolution of $L^\ast[f]\ge a$ (resp., $L^\ast[f]> a$) in $\Omega$, if whenever $f-\varphi$ achieves a strict local maximum at $\hat{p}\in \Omega$ for a smooth function $\varphi:\Omega\to \mathbb{R}$, we have
\begin{equation}\label{eq sub heis}
-L^\ast[\varphi](\hat{p})\le -a \quad (\text{resp., } \ \  -L^\ast[\varphi](\hat{p})<-a),
\end{equation}
where 

We say $f$ satisfies $L^\ast[f]\geq a$ (resp., $L^\ast[f]>a$) in the viscosity sense if it is a viscosity subsolution of $L^\ast[f]\geq a$ (resp., $L^\ast[f]>a$).
\end{defi}
Note that, in view of \cite[Remark 2.5]{BGJ13}, $L^\ast[\varphi](p)$ is the maximum eigenvalue of $(\nabla_H^2\varphi)^\star(p)$ if $\nabla_H\varphi(p)=0$.

In addition to the above two types of subsolutions, we introduce a third intermediate notion using the function envelope $\ol{L[f]}$ defined as in \eqref{envelope2} for $f\in C^2(\Omega)$ and $p\in \Omega$. Note that in general it is not equal to $L[f]$ or  $L^\ast[f]$. 
In fact, we have \eqref{compare} for $f: \Omega\to \R$ smooth and $p\in \Omega$.
\begin{defi}[Subsolutions associated to the function envelope]
Let $a\in \R$. A locally bounded function $f\in USC({\Omega})$ is said to be a viscosity subsolution of $\ol{L[f]}\ge a$ (resp., $\ol{L[f]}> a$) in $\Omega$, if whenever $f-\varphi$ achieves a strict local maximum at $\hat{p}\in \Omega$ for a smooth function $\varphi:\Omega\to \mathbb{R}$, we have
\[
-\ol{L[\varphi]}(\hat{p})\le -a \quad (\text{resp., } \ \  -\ol{L[\varphi]}(\hat{p})<-a).
\]
We say $f$ satisfies $\ol{L[f]}\ge a$ (resp., $\ol{L[f]}> a$) in the viscosity sense if it is a viscosity subsolution of $\ol{L[f]}\ge a$ (resp., $\ol{L[f]}> a$). 
\end{defi}



From the viewpoint of standard viscosity solution theory in the Euclidean space, the weakest notion in Definition \ref{sub heis} seems to be the most suitable option to understand a subsolution of $L[f]\geq a$ in $\Omega$. 
However, as already emphasized before, in the Heisenberg group these notions, especially Definition \ref{sub heis s} and Definition \ref{sub heis}, demonstrate distinct properties when we use them to characterize h-quasiconvex functions. We therefore keep all these separate definitions in our work for different purposes and carefully distinguish the terminology about them in our later use.


\subsection{Necessary condition for h-quasiconvexity}\label{sec:necessary}

We prove the first statement in Theorem \ref{main thm}, which is analogous to Theorem \ref{necessary} for the Euclidean case. 



\begin{thm}[Necessary condition for h-quasiconvexity]\label{strict heis}
Let $\Omega$ be an h-convex open set in $\H$. If $f \in USC(\Omega)$ is h-quasiconvex, then 
$f$ is a viscosity subsolution of $L[f]\geq 0$ in $\Omega$. In particular, $f$ is also a viscosity subsolution of $L^\ast[f]\geq 0$ in $\Omega$. 
\end{thm}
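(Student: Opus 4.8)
The plan is to mimic the standard argument showing a convex (here, quasiconvex) function is a viscosity subsolution of the natural second-order operator, but carried out along horizontal lines in $\H$. Suppose $f-\varphi$ attains a strict local maximum at $\hat p\in\Omega$ for a smooth $\varphi$. We must show $L[\varphi](\hat p)\ge 0$, i.e. that $\langle(\nabla_H^2\varphi)^\star(\hat p)\eta,\eta\rangle\ge 0$ for every unit $\eta\in\R^2$ with $\langle\nabla_H\varphi(\hat p),\eta\rangle=0$. Fix such an $\eta$ and let $X_\eta=\eta_1 X_1+\eta_2 X_2$ be the corresponding left-invariant horizontal vector field; the integral curve $\gamma(s)$ of $X_\eta$ through $\hat p$ is exactly a horizontal line segment through $\hat p$ (here one uses that left-invariant horizontal vector fields have straight-line integral curves in exponential coordinates, so $\gamma(s)=\hat p\cdot(s\eta_1,s\eta_2,0)$ up to the group law, and in particular $\gamma(s)\in\H_{\hat p}$ for all small $s$). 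For $|s|$ small, $\gamma(s)\in\Omega$ since $\Omega$ is open.

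The key step is to compare $f$ and $\varphi$ along $\gamma$. Because $\hat p=\gamma(0)$ lies on the horizontal segment $[\gamma(-s),\gamma(s)]$ (the midpoint, since the parametrization is affine in exponential coordinates), h-quasiconvexity of $f$ gives
\[
f(\hat p)\le\max\{f(\gamma(s)),\,f(\gamma(-s))\}
\]
for all small $s>0$. On the other hand, $\hat p$ being a strict local maximum of $f-\varphi$ forces $f(\gamma(\pm s))-\varphi(\gamma(\pm s))<f(\hat p)-\varphi(\hat p)$ for $s\ne 0$ small, hence $f(\gamma(\pm s))<f(\hat p)+\varphi(\gamma(\pm s))-\varphi(\hat p)$. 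Combining, for every small $s>0$,
\[
0\le\max\{\varphi(\gamma(s))-\varphi(\hat p),\ \varphi(\gamma(-s))-\varphi(\hat p)\}.
\]
Set $g(s):=\varphi(\gamma(s))$, a smooth scalar function. Then $g(0)=\varphi(\hat p)$ and $\max\{g(s)-g(0),g(-s)-g(0)\}\ge 0$ for all small $s>0$, so $g(s)+g(-s)-2g(0)\ge \min\{g(s)-g(0),g(-s)-g(0)\}$; more directly, divide by $s^2$ and use Taylor's theorem: $g(s)+g(-s)-2g(0)=g''(0)s^2+o(s^2)$, while the inequality above rules out $g(s)-g(0)$ and $g(-s)-g(0)$ being simultaneously negative, which by Taylor expansion of each ($g(\pm s)-g(0)=\pm g'(0)s+\tfrac12 g''(0)s^2+o(s^2)$) forces $g''(0)\ge 0$ once we also note $g'(0)=0$.

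To close the argument I would first extract $g'(0)=0$: $g'(0)=\langle\nabla_H\varphi(\hat p),\eta\rangle=0$ by the choice of $\eta$ (this is exactly the standard identity $\frac{d}{ds}\varphi(\gamma(s))|_{0}=(X_\eta\varphi)(\hat p)$). Then $g(\pm s)-g(0)=\tfrac12 g''(0)s^2+o(s^2)$, and if $g''(0)<0$ both $g(s)-g(0)$ and $g(-s)-g(0)$ would be strictly negative for small $s$, contradicting the displayed inequality; hence $g''(0)\ge 0$. Finally $g''(0)=\langle(\nabla_H^2\varphi)^\star(\hat p)\eta,\eta\rangle$ — this is the computation that the second derivative of $\varphi$ along the integral curve of $X_\eta$ equals $X_\eta^2\varphi=\eta_1^2 X_1^2\varphi+\eta_1\eta_2(X_1X_2+X_2X_1)\varphi+\eta_2^2 X_2^2\varphi$ evaluated at $\hat p$, which is precisely $\langle(\nabla_H^2\varphi)^\star(\hat p)\eta,\eta\rangle$ since the antisymmetric part $[X_1,X_2]\varphi$ drops out of the quadratic form. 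Since $\eta$ was an arbitrary admissible direction, $L[\varphi](\hat p)\ge 0$, i.e. $-L[\varphi](\hat p)\le 0$, which is the required subsolution inequality. The final assertion that $f$ is also a subsolution of $L^\ast[f]\ge 0$ is then immediate from $L[\varphi]\le L^\ast[\varphi]$ (inequality \eqref{compare}), so $-L^\ast[\varphi](\hat p)\le -L[\varphi](\hat p)\le 0$.

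The main obstacle I anticipate is the bookkeeping around the characteristic case $\nabla_H\varphi(\hat p)=0$: then the constraint $\langle\nabla_H\varphi(\hat p),\eta\rangle=0$ is vacuous and $\eta$ ranges over all unit vectors, so we must verify the inequality $g''(0)=\langle(\nabla_H^2\varphi)^\star(\hat p)\eta,\eta\rangle\ge 0$ for every direction; but the argument above never used $\nabla_H\varphi(\hat p)\ne 0$ — it only used $g'(0)=\langle\nabla_H\varphi(\hat p),\eta\rangle=0$, which holds automatically in this case — so no separate treatment is actually needed. A secondary technical point is confirming that the integral curve of $X_\eta$ is genuinely a horizontal segment in the sense of Definition (i.e. $\gamma(s)\in\H_{\hat p}$ and $\hat p\in[\gamma(-s),\gamma(s)]$ with $[\cdot,\cdot]$ interpreted via the Euclidean-convex-combination definition used in the paper); this is a direct computation with the group law that I would state but not belabor.
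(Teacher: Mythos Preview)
Your proof is correct and follows essentially the same approach as the paper's. Both arguments Taylor-expand $\varphi$ along the horizontal segment $s\mapsto \hat p\cdot (s\eta,0)$ for an admissible direction $\eta$, use h-quasiconvexity to bound $f(\hat p)$ by the maximum of $f$ at the two endpoints, and conclude that $\langle(\nabla_H^2\varphi)^\star(\hat p)\eta,\eta\rangle\ge 0$; the only cosmetic difference is that the paper phrases this by contradiction (assume $L[\varphi](\hat p)<0$, pick a bad $\eta$, contradict h-quasiconvexity) whereas you argue directly for each admissible $\eta$.
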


\begin{proof}
Suppose that $f:\Omega\to \R$ is h-quasiconvex but  $f$ fails to satisfy $L[f]\geq 0$ in the viscosity sense. Then there exist a smooth function $\varphi:\Omega\to \R$ and $\hat{p}\in \Omega$ such that $f-\varphi$ achieves a maximum at $\hat{p}$ and 
\[
L(\varphi)(\hat{p})=\min\{\langle (\nabla^2_H \varphi)^\star(\hat{p})\eta, \eta\rangle: \eta\in \mathbb{R}^2, |\eta|=1, \langle \nabla_H \varphi(\hat{p}),\eta\rangle=0\}<0.
\]  
It follows that there exists a unit vector $v_h\in \mathbb{R}^2$ such that $\langle\nabla_H \varphi(\hat{p}), v_h\rangle = 0$ and $\langle (\nabla^2_H \varphi)^\star(\hat{p})v_h, v_h\rangle=-c$ with $c>0$. By Taylor expansion, for $r>0$ sufficiently small and $v=(v_h, 0)\in \H_0$, we have
\[
\begin{aligned}
f(\hat{p}\cdot rv)\le \varphi(\hat{p}\cdot r v)&=\varphi(\hat{p})+r\langle\nabla_H \varphi(\hat{p}), v_h\rangle+\frac{r^2}{2}\langle (\nabla^2_H \varphi)^\star(\hat{p})v_h, v_h\rangle+o(r^2)\\
&=f(\hat{p})-\frac{c r^2}{2}+o(r^2),
\end{aligned}
\]
\[
\begin{aligned}
f(\hat{p}\cdot rv^{-1})\le \varphi(\hat{p}\cdot rv^{-1})&=\varphi(\hat{p})-r\langle\nabla_H \varphi(\hat{p}), v_h\rangle+\frac{r^2}{2}\langle (\nabla^2_H \varphi)^\star(\hat{p})v_h, v_h\rangle+o(r^2)\\
&=f(\hat{p})-\frac{c r^2}{2}+o(r^2).
\end{aligned}
\]
On the other hand, since $f$ is h-quasiconvex, we have
\[
f(\hat{p})\le \max\{f(\hat{p}\cdot r v), f(\hat{p}\cdot rv^{-1})\}\le f(\hat{p})-\frac{c r^2}{2}+o(r^2).
\]
Dividing $r^2$ on both sides and passing to the limit as $r\to 0$ yield a contradiction. 
\end{proof}

The following sub-Riemannian variant of  \cite[Example 1.1]{BGJ13} shows that $L[f]\ge 0$ in the viscosity sense does not imply h-quasiconvexity of $f$. 

\begin{example}
Let $g: \R\to \R$ be given by $g(t)=-t^4$. We show that the smooth function $f(x,y,z) =g(z)$ is not h-quasiconvex but $L^\ast[f] = L[f] =0$ in $\H$. Indeed, a direct computation yields, for $p=(x, y, z)\in \H$, 
\[
\nabla_H f(p) = (X_1 f (p) ,X_2 f (p) ) = \frac{g'(z)}{2} (-y, x)
\]
and
\begin{align*}
(\nabla_{H}^2 f)^\star(p) = \frac{g''(z)}{4}\begin{pmatrix}
y^2 & -xy \\
-xy & x^2 
\end{pmatrix}.
\end{align*}
Here $g'(z) = - 4 z^3$ and $g''(z) = -12 z^2$. We divide our argument into two cases. Suppose that $\nabla_H f (p) = 0$. Then either $z = 0$ or $(x, y) = 0$ holds. It follows from either of the conditions that $(\nabla_{H}^2 f)^\star(p)= 0$ and thus $L^\ast[f](p)= L[f](p)=0$.  If $\nabla_H f(p) \ne 0$, then taking $\eta=(x, y)/\sqrt{x^2+y^2}$, we immediately deduce that 
\[
 L^\ast[f](p) = L[f](p) =\la (\nabla_H^2 f)^\star(p)\eta, \eta\ra=0.
\]
Finally, to show that $f$ is not h-quasiconvex, it suffices to notice that for $p_1=(1, 2, 1)$, $p_2=(1, -2, -1)$ and $q=(1, 0, 0)$, 
\[
f(q) = 0 > -1 = f(p_1) = f(p_2)
\]
and $p_2, q\in \H_{p_1}$.
\end{example}



\subsection{Sufficient condition for h-quasiconvexity} \label{sec:sufficient}
We next prove the second statement of Theorem \ref{main thm}, which generalizes Theorem \ref{strict} in our sub-Riemannian setting. The strict inequality $L[f]>0$ or $\ol{L[f]}> 0$ is known to be a sufficient condition for h-quasiconvexity of $f$, as verified by Calogero, Carcano and Pini \cite[Theorem 4.6]{CCP2} only for $f\in C^2(\Omega)$. Our result below further extends their result in the viscosity sense to the class of upper semicontinuous functions. 

\begin{thm}[Sufficient condition for h-quasiconvexity]\label{sufficient1-heis}
    Let $\Omega$ be an h-convex open set in $\mathbb{H}$. If $f\in USC(\Omega)$ is a viscosity subsolution of $\ol{L[f]}> 0$ in $\Omega$, 
then $f$ is h-quasiconvex in $\Omega$. In particular, if $f\in USC(\Omega)$ is a viscosity subsolution of $L[f]>0$ in $\Omega$, then $f$ is h-quasiconvex in $\Omega$.
\end{thm}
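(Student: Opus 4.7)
The plan is to argue by contradiction: assume $f\in USC(\Omega)$ is a viscosity subsolution of $\ol{L[f]}>0$ in $\Omega$ but fails to be h-quasiconvex, so that there exist $p,q\in\Omega$ with $q\in\H_p$ and $w\in[p,q]$ satisfying $f(w)>\max\{f(p),f(q)\}$. Left translations $p\mapsto r\cdot p$ and rotations about the $z$-axis are group automorphisms of $\H$ that act orthogonally on the horizontal distribution, so they preserve $L[\cdot]$, $\ol{L[\cdot]}$, and the viscosity subsolution property. Using them I reduce to the case $p=0$, $q=(\ell,0,0)$ with $\ell>0$, $w=(s_0,0,0)$ with $s_0\in(0,\ell)$, and the horizontal segment $\Gamma:=\{(s,0,0):s\in[0,\ell]\}\subset\Omega$.

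Next I restrict $f$ to $\Gamma$ and exploit one-dimensional viscosity theory to extract a smooth touching function. The restriction $g(s):=f(s,0,0)$ is upper semicontinuous on $[0,\ell]$ and (after slightly shrinking the endpoints using upper semicontinuity) fails to be quasiconvex on some open subinterval $(a,b)\subset(0,\ell)$ containing $s_0$. Applying the contrapositive of the $n=1$ case of Theorem~\ref{strict} (equivalently \cite[Theorem 2.7]{BGJ13}), $g$ is not a viscosity subsolution of $L_{eucl}[g]>0$ on $(a,b)$, so there exist a smooth $\psi$ and $\hat{s}\in(a,b)$ for which $g-\psi$ attains a strict local maximum at $\hat{s}$ with $\psi'(\hat{s})=0$ and $\psi''(\hat{s})\le 0$. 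I then lift $\psi$ to a sub-Riemannian test function of the form $\varphi_\alpha(x,y,z):=\psi(x)+\alpha\phi(y,z)$ with $\alpha>0$ large, where $\phi\ge 0$ is smooth, vanishes only at $(0,0)$, and is designed so that $\ol{L[\varphi_\alpha]}$ stays non-positive along $\Gamma$. On a small compact box around $(\hat{s},0,0)$, the strict one-dimensional maximum of $g-\psi$ at $\hat{s}$, upper semicontinuity of $f$, and the $(y,z)$-penalty force the maximizer $\hat{p}_\alpha$ of $f-\varphi_\alpha$ to lie in the interior of the box and to satisfy $\hat{p}_\alpha\to(\hat{s},0,0)$ as $\alpha\to\infty$. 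The standard perturbation by $\sigma\,|q-\hat{p}_\alpha|^4$ then makes this maximum strict without changing $\nabla_H\varphi_\alpha$ or $(\nabla_H^2\varphi_\alpha)^\star$ at $\hat{p}_\alpha$, and hence without altering $\ol{L[\varphi_\alpha]}(\hat{p}_\alpha)$.

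The viscosity subsolution hypothesis applied at $\hat{p}_\alpha$ then yields $\ol{L[\varphi_\alpha]}(\hat{p}_\alpha)>0$, and the contradiction is obtained by computing the upper envelope directly and showing that, for $\alpha$ sufficiently large, $\ol{L[\varphi_\alpha]}(\hat{p}_\alpha)\le 0$; this uses $\psi'(\hat{s})=0$, $\psi''(\hat{s})\le 0$, and the designed vanishing of the relevant entries of $(\nabla_H^2\phi)^\star$ along $\{y=z=0\}$. The ``in particular'' statement for subsolutions of $L[f]>0$ is then immediate from the pointwise bound $L[\varphi]\le\ol{L[\varphi]}$, which makes that hypothesis strictly stronger. \emph{The main obstacle} is the design of the penalty $\phi$: in sharp contrast to the Euclidean case, $L[\cdot]$ is genuinely discontinuous at characteristic points of smooth $\varphi$ in $\H$, and a naive quadratic choice $\phi(y,z)=y^2+z^2$ yields $\ol{L[\varphi_\alpha]}(\hat{s},0,0)$ of order $\alpha$, which defeats the contradiction. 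The penalty must therefore be carefully tailored, for instance using higher-order vanishing in $y$ together with an auxiliary localization in the $z$-direction, or a penalty adapted to the horizontal foliation, so that $\ol{L[\varphi_\alpha]}$ remains non-positive in a full neighborhood of $\Gamma$. A cleaner alternative that I would also consider is to regularize $f$ by a Kor\'anyi-gauge sup-convolution and reduce to the smooth setting treated in \cite[Theorem 4.6]{CCP2}.
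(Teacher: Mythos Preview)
Your overall strategy (contradiction via a test function whose $\ol{L}$ is nonpositive at the maximizer) matches the paper's, but the proposal has a genuine gap exactly where you flag it: you never construct the penalty $\phi$. You correctly observe that $\phi(y,z)=y^2+z^2$ fails, but the suggestions you offer (``higher-order vanishing in $y$'', ``auxiliary localization in $z$'', or sup-convolution) are not carried out, and the first two do not obviously work. For instance $\phi(y,z)=y^4+z^4$ makes $(\nabla_H^2\varphi_\alpha)^\star$ vanish in the $(2,2)$ entry \emph{at} $(\hat s,0,0)$, but you need $\ol{L[\varphi_\alpha]}\le 0$ in a full neighborhood (or at least at the off-segment maximizer $\hat p_\alpha$), and there the horizontal gradient direction can swing so that $\eta$ picks up the $X_2^2$ term, which contains $\tfrac{x^2}{4}\alpha\phi_{zz}$ and blows up. The sup-convolution route is also unclear: it is not evident that the strict viscosity inequality $\ol{L[\cdot]}>0$ survives Kor\'anyi sup-convolution, since this is not a standard translation-invariant operator and the semicontinuous envelope is delicate at characteristic points.

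The paper resolves precisely this obstacle, and it is worth seeing how. After normalizing the segment to the direction $(a,b)$ with $a^2+b^2=1$, it takes the explicit test function
\[
\varphi(x,y,z)=f(w)+k\Big((bx-ay)^2+\big(z-\tfrac{1}{2}(ax+by)(bx-ay)\big)^2\Big),
\]
i.e.\ a quadratic penalty in the two quantities $bx-ay$ and $z-\tfrac{1}{2}(ax+by)(bx-ay)$. Both quantities are annihilated by $aX_1+bX_2$, so $\varphi$ is constant along the integral curves of $aX_1+bX_2$ (the horizontal lines parallel to the segment). Consequently $\langle(a,b),\nabla_H\varphi\rangle\equiv 0$ and $(aX_1+bX_2)^2\varphi\equiv 0$, which gives $L[\varphi]\le 0$ \emph{everywhere}, hence $\ol{L[\varphi]}\le 0$ everywhere, and the contradiction follows at any interior maximizer without needing to locate it or send $\alpha\to\infty$. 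This is exactly the ``penalty adapted to the horizontal foliation'' you allude to; once you write it down, the one-dimensional detour through $\psi$ and the limit $\hat p_\alpha\to(\hat s,0,0)$ become unnecessary.
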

\begin{proof}
    Suppose that $f$ is not h-quasiconvex. 
Then there exist $q_1,q_2 \in \Omega$, $q_2 \in \mathbb{H}_{q_1}$ and $w \in [q_1,q_2]$ such that $f(q_1)\leq f(q_2) < f(w)$. 
 By left translation by $q_1^{-1}$ and dilation $\delta_\ell$ with $\ell={|q_1^{-1}\cdot q_2|_G}$, we can assume that $q_1=(0,0,0)$, $q_2=(a,b,0)$ and $w=\alpha q_2$ for some $a,b \in \mathbb{R}$, $a^2+b^2=1$ and $\alpha \in (0,1)$. 
Let $\pi$ denote the Euclidean projection onto the plane $ax+by=0$, that is, 
\[
\pi(x, y, z)=\left(b^2 x-aby, -abx+a^2 y, z\right).
\]
   In view of the upper semicontinuity of $f$, there exists a closed disk $D_r$ in the plane $ax+by=0$ centered at $0$ with radius $r>0$ small such that 
\[
Q_r:=\{(x,y,z)\in \H: \langle(x,y), (a,b)\rangle \in [0,1], \pi(x,y,z) \in D_r \} \subset \Omega   
\] 
and $f(p) < f(w)$ for all $p=(x,y,z)\in Q_r$ satisfying $ax+by=0$ or $ax+by=1$.


    Let $\varphi:\Omega \to \mathbb{R}$ be defined by
    \[ \varphi(x,y,z) := f(w) + k \left((bx-ay)^2 + \left(z-\frac{(ax+by)(bx-ay)}{2}\right)^2\right)\]
    for $k>0$ large enough to have $\varphi>f$ on $\partial Q_r$ and $f(w)=\varphi(w)$. Then, $f-\varphi$ attains its maximum at some interior point $\hat{p} \in Q_r$, where by assumption $- \ol{L[\varphi]}(\hat{p}) < 0$ holds.

    Notice that $\la(a,b), (X_1\varphi(p),X_2\varphi(p)) \ra= 0$ for any $p=(x,y,z)\in \Omega$. In fact, by direct computation, one obtains that
   \begin{align*}
       X_1 \varphi(p) &= 2bk(bx-ay)-kZ (2abx-a^2y+b^2y)-kyZ, \\
       X_2 \varphi(p) &= -2ak(bx-ay)-kZ(-a^2x+b^2x-2aby) + kxZ  
   \end{align*} 
   with
   \[
   Z=z-\frac{(ax+by)(bx-ay)}{2}.
   \]
  Then it follows that
   \begin{align*}
a  X_1 \varphi(p) + bX_2 \varphi(p) 
     &= kZ(-2a^2bx+a^3y-ab^2y-ay)+kZ(a^2bx-b^3x+2ab^2y+bx)\\
        &=kZ\left(-a^2bx+a^3y+ab^2y-b^3x+(bx-ay)\right)\\
        &=kZ \left(a^2(ay-bx)+b^2(ay-bx)+(bx-ay)\right)=0.
   \end{align*} 
   The last equality follows from the fact that $a^2+b^2=1$. Therefore, for each $p\in \Omega$, we can plug $\eta=(a, b)$ into the definition of $L[\varphi](p)$ in \eqref{sec operator heis} and obtain
    \begin{align*}
        L[\varphi](p) \leq \la (\nabla_H^2 \varphi)^\star(p) \eta, \eta \ra & = a^2 X_1^2\varphi(p) + ab (X_1X_2\varphi(p) +X_2X_1\varphi(p))+ b^2 X_2^2\varphi(p) \\ 
        &= (aX_1+bX_2)^2\varphi(p) =0. 
    \end{align*}
 It follows that $L[\varphi](\hat{p})\leq 0$ and
 \[
 \ol{L[\varphi]}(\hat{p})=\limsup_{p\to \hat{p}}L[\varphi](p)\leq 0,
 \] 
 which contradicts $- \ol{L[\varphi]}(\hat{p}) < 0$.
\end{proof}

\begin{rmk}
The condition that $\overline{L[f]}>0$ is only a sufficient condition to guarantee h-quasiconvexity but not a necessary condition. Note that any constant function is obviously h-quasiconvex in $\H$ but fails to satisfy this strict inequality in the viscosity sense. 
\end{rmk}

Although a sub-Riemannian variant of Theorem \ref{localmax} may be expected to hold in the Heisenberg group as well, it is not clear whether the conditions that $L[f]\geq 0$ and $f$ does not attain local maxima imply h-quasiconvexity of $f$. Our example below shows that such implication fails to hold if one changes the subsolution condition to $L^\ast[f]\geq 0$, or $\ol{L[f]}\geq 0$ or even a strict inequality $L^\ast[f]> 0$.

\begin{example}\label{positive}
Let $f:\H\to\R$ be given by $$f(x,y,z) := x^2 + \left(z +{ xy\over 2}\right)^2.$$ We shall prove that $f$ is not h-quasiconvex, $f$ does not attain local maxima, and $L^\ast[f] > 0$ and $\ol{L[f]}\geq 0$ hold everywhere in $\H$.

Let $p_1=(-\vep,1, 1)$ and $p_2=(\vep, -1,1)$ for $\vep > 0$ small enough.  Take $q=(0, 0, 1)$. We have $p_1, q\in \H_{p_2}$ and
\[
f(q) = 1 > \vep^2 + \left( 1 - \frac{\vep}{2}\right)^2 =   f(p_1) = f(p_2).
\]
It then follows that $f$ is not h-quasiconvex. 

Moreover, by direct computations, $\nabla f(p)=0$ only holds at $p=(x,y,z)$ satisfying $x=z=0$. However, it is not difficult to see that the function has a minimum value $0$ on the $y$-axis. Therefore $f$ cannot attain local maxima anywhere in $\H$.

In addition, we can also compute the derivatives to show at $p=(x, y, z)$,
\[
\nabla_H f (p)  = (2x, 2x(z + xy/2)),
\]
which implies $\nabla_H f (p) = 0$ if and only if $x = 0$, and
\begin{align*}
(\nabla_{H}^2 f)^\star (p)= \begin{pmatrix}
2 & z + xy/2 \\
z + xy/2 & 2 x^2 
\end{pmatrix}.
\end{align*}
We discuss the vanishing and non-vanishing gradient cases separately. 

In the case $\nabla_H f (p) = 0$ or equivalently $x = 0$, we observe that
\[
\tr (\nabla_{H}^2 f)^\star(p)=2+2x^2>0
\] 
holds, which implies that the maximum eigenvalue of $(\nabla_{H}^2 f)^\star (p)$ is positive as well. It follows that  $L^\ast[f](p)>0$ in this case.

{Suppose now that $\nabla_H f (p) \ne 0$ or equivalently $x \ne 0$.}
In this case, letting
\[
\eta = \frac{1}{\sqrt{1 + (z + xy/2)^2}}(z + xy/2, - 1),
\]
 we get $\la \nabla_H f (p), \eta\ra = 0$, $|\eta| = 1$ and
\[
\la(\nabla_{H}^2 f)^\star(p)\eta,  \eta\ra= \frac{1}{1 + (z + xy/2)^2} (2 (z + xy/2)^2 - 2(z + xy/2)^2 + 2x^2)> 0,
\]
which implies $L^\ast[f](p)=L[f](p)>0$. Combining these two cases, we see that $\ol{L[f]}\geq 0$ also holds everywhere in $\H$. 
The proof of our claim is now complete. 

Note that in the vanishing gradient case above, we cannot obtain $L[f](p)\geq 0$ but only $L^\ast[f](p)\geq 0$ and $\ol{L[f]}(p)\geq 0$. It would be interesting to further investigate the analogue of Theorem \ref{localmax} in the case of Heisenberg group. 

This example also indicates a significant difference between the Euclidean and sub-Riemannian situations about the adoption of semicontinuous envelopes in the definition of subsolutions. Recall that, as clarified in Remark \ref{rmk weak} and Theorem \ref{localmax2}, the weaker viscosity inequality $L_{eucl}^\ast(f)\geq 0$, together with the absence of local maxima, is already sufficient to guarantee the quasiconvexity of $f$ in the Euclidean space. However, in the Heisenberg group, Example \ref{positive} shows that having $L^\ast(f)\geq 0$ or $\ol{L[f]}\geq 0$ is not enough to get h-quasiconvexity of $f$ even if there exist no local maxima of $f$. 
\end{example}





For our later application, let us apply the sufficient condition in Theorem \ref{sufficient1-heis} to a special case when the level sets of $f$ are rotationally  symmetric about $z$-axis. 

\begin{example}\label{ex rot-symmetry}
We study the h-quasiconvexity of $f$ when $f(p)=r^2-g(z)$ where $r=(x^2+y^2)^{1/2}$ for $p=(x, y, z)\in \H$ and $g\in C(\R)$ is a given function. In this case, the sublevel set $E=\{f<0\}$ is rotationally symmetric with respect to the $z$-axis. In other words, $f$ is a function whose $0$-level set is a surface of revolution $\partial E$ generated by rotating the graph of $r=g(z)^{1/2}$ around the $z$-axis.

If $g$ is assumed to be of class $C^2$, then we can calculate $L[f]$ as follows. Note that by direct computation we have
\[
\nabla_H f(p)=\left(2x+{y\over 2} g'(z),  2y-{x\over 2} g'(z)\right),
\]
\[
|\nabla_H f(p)|={1\over 2}(x^2+y^2)^{1\over 2} (16+g'(z)^2)^{1\over 2}={1\over 2} r(16+g'(z)^2)^{1\over 2},
\]
\[
(\nabla_H^2 f)^\star(p)=\begin{pmatrix}
2-y^2 g''(z)/4 & xy g''(z)/4\\
 xy g''(z)/4 & 2-x^2 g''(z)/4 
\end{pmatrix}.
\]
If $\nabla_H f(p)\neq 0$, i.e., $(x, y)\neq (0, 0)$, then using 
\[
\eta={1\over (x^2+y^2)^{1\over 2} (16+g'(z)^2)^{1\over 2}}\left(4y-{x} g'(z), -4x-{y} g'(z)\right),
\] 
we get
\[
L[f](p)=\la (\nabla_H^2 f)^\star(p) \eta, \eta \ra=2-{4r^2 g''(z)\over 16+g'(z)^2}.
\]
If $\nabla_H f(p)= 0$, i.e., $(x, y)= (0, 0)$, then we have $(\nabla_H^2 f)^\star(p)= 2I$ and thus $L[f](p)=2$. Hence, $f$ is h-quasiconvex in an h-convex open set $\Omega\subset \H$ if 
\begin{equation}\label{rot graph cond}
1-{2(x^2+y^2) g''(z)\over 16+g'(z)^2}>0\quad \text{for all $(x, y, z)\in \Omega$.}
\end{equation}
In order for the argument here to work, $g$ actually need not be of class $C^2(\R)$. By Theorem \ref{sufficient1-heis} we see that $f$ is h-quasiconvex in $\Omega$ if $g\in LSC(\R)$ and \eqref{rot graph cond} holds only in the viscosity sense.

A more general situation is the case when $f(p)=F(\rho, z)$, where $\rho=r^2=x^2+y^2$ and $F$ is a function in $[0, \infty)\times \R$. In this case, by similar computations, we have 
\[
L[f]=2F_\rho+4\rho \frac{F_{\rho\rho} F_z^2-2F_{\rho z}F_\rho F_z+F_{zz}F_\rho^2}{F_z^2+16 F_\rho^2} 
\]
at any point $p\in \H$ satisfying $F_\rho(p)\neq 0$ or $F_z(p) \neq 0$.

\end{example}



\subsection{Uniformly h-quasiconvex functions}\label{sec:uniform}

This section is devoted to an even stronger notion of h-quasiconvex funtions, which we call uniform h-quasiconvexity.  
\begin{defi}[Uniformly h-quasiconvex functions]\label{uniformqc}
Let $\Omega\subset \H$ be an h-convex set. We say a function $f$ is uniformly h-quasiconvex in $\Omega$ if there exists  $r_0>0$ and 
$\lambda>0$ such that 
\begin{equation}\label{uni-qc}
\begin{aligned}
f(p)\leq \max & \{f(p\cdot rv),   f(p\cdot rv^{-1})\} -\lambda r^2\\
& \text{for all $0 < r < r_0$, $p\in \Omega$, $v\in \H_0$ with $|v|=1$ such that $p\cdot rv, p\cdot rv^{-1}\in \Omega$.}
\end{aligned}
\end{equation}
\end{defi}
By definition one cannot show directly that uniformly h-quasiconvex functions are all h-quasiconvex. But for locally bounded USC functions we can show in Proposition \ref{prop uqc vis} that they are actually  h-quasiconvex.

It is thus easily seen that in a convex domain $\Omega\subset \R^3$, all uniformly convex functions in the Euclidean sense are uniformly h-quasiconvex. On the other hand, a function that is not uniformly convex in $\R^3$ can still be uniformly h-quasiconvex in $\H$, as shown by the following example. 

\begin{example} \label{ex linear}
Consider again the rotational symmetric case
\begin{equation}\label{rot-graph}
f(x, y, z)=x^2+y^2-g(z)\quad \text{for $(x, y, z)\in \H$}
\end{equation}
with $g\in C(\R)$. If we take $g$ to be a {concave} function, then for any $p=(x, y, z)\in \H$, $v=(\eta_1, \eta_2, 0)
\in \H_0$ with $|v|=1$ and $r>0$, since 
\[- {1\over 2}g\left(z+{1\over 2}r x \eta_2-{1\over 2}r y \eta_1\right)-{1\over 2}g\left(z-\left({1\over 2}r x \eta_2-{1\over 2}r y \eta_1\right)\right) \ge -g(z),
\] by concavity of $g$, we have
\[
\begin{aligned}
& \max \{f(p\cdot r v), f(p\cdot rv^{-1})\}\geq {1\over 2} f(p\cdot r v)+{1\over 2} f(p\cdot rv^{-1})\\
&\geq  {1\over 2}(x+r\eta_1)^2+ {1\over 2}(y+r\eta_2)^2+ {1\over 2}(x-r\eta_1)^2+ {1\over 2}(y-r\eta_2)^2-g(z)\\
&\geq x^2+y^2-g(z)+r^2=f(x, y, z)+r^2,
\end{aligned}
\]
which shows that $f$ is uniformly h-quasiconvex in $\H$. In particular, if we choose $g$ to be a linear function, then $f$ is not uniformly convex in $\R^3$.

In fact, $f$ can be uniformly h-quasiconvex in $\H$ even if it is not convex in $\R^3$. Assume that $g$ satisfies $g''\leq C_1$ in the viscosity sense, or equivalently $g$ is a semiconcave function with semiconcavity constant $C_1$; see for example \cite{CaSBook} for the definition of semiconvex/semiconcave functions and \cite{ALL, Im1, BaDr1} for the viscosity characterization.  
Let
\[
\Omega=\{(x, y, z): x^2+y^2<C_2\}
\]
with $C_1, C_2>0$ fulfilling $C_1C_2<8$. Then the same calculation as above yields
\[
\begin{aligned}
& \max \{f(p\cdot r v), f(p\cdot r v^{-1})\}\\
 &\geq x^2+y^2+ r^2 - {1\over 2}g\left(z+{1\over 2}r x \eta_2-{1\over 2}r y \eta_1\right)-{1\over 2}g\left(z-\left({1\over 2}r x \eta_2-{1\over 2}r y \eta_1\right)\right)\\
 & \geq x^2+y^2-g(z)+r^2-{C_1\over 8} r^2 (x^2+y^2) \geq f(p)+\left(1-{C_1C_2\over 8}\right)r^2
 \end{aligned}
\]
for all $p=(x, y, z)\in \H$, $v\in \H_0$ with $|v|=1$ and $r>0$ such that $p\cdot r v, p\cdot r v^{-1}\in \Omega$. Hence, in this case $f$ is uniformly h-quasiconvex in $\Omega$ but $f$ is not necessarily convex in the Euclidean sense. 

\end{example}

Using the elliptic operator $L[f]$ in \eqref{sec operator heis}, 
we can establish a necessary condition for $f$ to be uniformly h-quasiconvex, as below. 

\begin{prop}[Necessary condition for uniform h-quasiconvexity]\label{prop uqc vis}
Let $\Omega$ be an open set in $\H$. Let $f\in USC(\Omega)$ be locally bounded and satisfies \eqref{uni-qc} for some 
$\lambda>0$. Then $f$ is a viscosity subsolution of $L[f]\geq 2\lambda$ in $\Omega$. In particular, $f$ is also a viscosity subsolution of $\ol{L[f]}\geq 2\lambda$ in $\Omega$.
\end{prop}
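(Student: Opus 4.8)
The plan is to follow, almost verbatim, the contradiction argument used for the necessary condition in Theorem~\ref{strict heis}, the only new ingredient being that one must carry the quadratic surplus $\lambda r^2$ of \eqref{uni-qc} through the Taylor estimate. So I would suppose, for contradiction, that $f$ is \emph{not} a viscosity subsolution of $L[f]\ge 2\lambda$ in $\Omega$: then there are a smooth $\varphi$ and a point $\hat p\in\Omega$ such that $f-\varphi$ attains a strict local maximum at $\hat p$ and $L[\varphi](\hat p)<2\lambda$. After subtracting a constant from $\varphi$, I may assume $\varphi(\hat p)=f(\hat p)$, so that $f\le\varphi$ near $\hat p$ with equality at $\hat p$; this does not change $\nabla_H\varphi(\hat p)$ or $(\nabla_H^2\varphi)^\star(\hat p)$, hence not $L[\varphi](\hat p)$.

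Next I would extract the ``bad'' horizontal direction. The constraint set $\{\eta\in\R^2:|\eta|=1,\ \langle\nabla_H\varphi(\hat p),\eta\rangle=0\}$ is compact (a pair of antipodal points when $\nabla_H\varphi(\hat p)\ne0$, the whole unit circle when $\nabla_H\varphi(\hat p)=0$), so the minimum defining $L[\varphi](\hat p)$ in \eqref{sec operator heis} is attained at some unit vector $v_h$: thus $\langle\nabla_H\varphi(\hat p),v_h\rangle=0$ and $\mu:=\langle(\nabla_H^2\varphi)^\star(\hat p)v_h,v_h\rangle=L[\varphi](\hat p)<2\lambda$. Writing $v=(v_h,0)\in\H_0$, so $|v|=1$ and $v^{-1}=(-v_h,0)$, I would then Taylor-expand $\varphi$ along the two horizontal lines $r\mapsto\hat p\cdot rv$ and $r\mapsto\hat p\cdot rv^{-1}$ (which are integral curves of the left-invariant vector field with horizontal component $\pm v_h$). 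The first-order terms vanish since $\langle\nabla_H\varphi(\hat p),v_h\rangle=0$, and the second-order term equals $\langle(\nabla_H^2\varphi)^\star(\hat p)v_h,v_h\rangle=\mu$ because differentiating twice along such a line amounts to applying that vector field twice. This gives, for $r>0$ small (so that $\hat p\cdot rv,\hat p\cdot rv^{-1}\in\Omega$, $\Omega$ being open),
\[
f(\hat p\cdot rv)\le\varphi(\hat p\cdot rv)=f(\hat p)+\tfrac{\mu}{2}r^2+o(r^2),\qquad f(\hat p\cdot rv^{-1})\le f(\hat p)+\tfrac{\mu}{2}r^2+o(r^2).
\]

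Finally I would insert these bounds into the uniform h-quasiconvexity inequality \eqref{uni-qc} at $p=\hat p$: for all sufficiently small $r>0$,
\[
f(\hat p)\le\max\{f(\hat p\cdot rv),f(\hat p\cdot rv^{-1})\}-\lambda r^2\le f(\hat p)+\Bigl(\tfrac{\mu}{2}-\lambda\Bigr)r^2+o(r^2),
\]
so that dividing by $r^2$ and letting $r\to0$ forces $\mu\ge2\lambda$, contradicting $\mu<2\lambda$. Hence $L[\varphi](\hat p)\ge 2\lambda$, i.e.\ $-L[\varphi](\hat p)\le-2\lambda$, which is the assertion; the ``in particular'' statement is then immediate from \eqref{compare}, since $\ol{L[\varphi]}(\hat p)\ge L[\varphi](\hat p)\ge2\lambda$.

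I do not expect a genuine obstacle: this is a quantitative refinement of a result already proved in the excerpt, and nothing beyond bookkeeping of the $\lambda r^2$ term is involved. The one point that deserves care is the identification of the quadratic Taylor coefficient along a horizontal line with the value of the symmetrized horizontal Hessian on $v_h$ — which is exactly where the non-commutative group structure of $\H$ enters — together with the routine matters of normalizing $\varphi(\hat p)=f(\hat p)$, checking $\hat p\cdot rv^{\pm1}\in\Omega$ for small $r$, and the case split on whether $\nabla_H\varphi(\hat p)$ vanishes.
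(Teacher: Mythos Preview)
Your proposal is correct and follows essentially the same approach as the paper's proof: transfer the uniform h-quasiconvexity inequality from $f$ to the test function $\varphi$ via the touching condition, Taylor-expand $\varphi$ along the horizontal direction orthogonal to $\nabla_H\varphi(\hat p)$, and read off $L[\varphi](\hat p)\ge 2\lambda$. The only cosmetic differences are that the paper argues directly rather than by contradiction and explicitly separates the cases $\nabla_H\varphi(\hat p)\ne 0$ and $\nabla_H\varphi(\hat p)=0$, whereas you handle both at once by taking $v_h$ to be a minimizer in the definition of $L[\varphi](\hat p)$.
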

\begin{proof}
Suppose that there exist a smooth function $\varphi: \Omega\to \R$ and $\hat{p}\in \Omega$ such that $f-\varphi$ attains a local maximum at $\hat{p}$. Then it follows from \eqref{uni-qc} that 
\begin{equation}\label{eq:uqc1}
\varphi(\hat{p})\leq \max\{\varphi(\hat{p}\cdot rv), \varphi(\hat{p}\cdot rv^{-1})\}-\lambda r^2
\end{equation}
for all $r>0$ small and all $v\in \H_0$ with $|v|=1$. Write $v = (v_h,0) \in \H$ for $v_h\in \R^2$.
Then by Taylor expansion, \eqref{eq:uqc1} implies 
\begin{equation}\label{eq:uqc2}
-r|\la \nabla_H \varphi(\hat{p}), v_h\ra |-{r^2\over 2} \la (\nabla_H^2 \varphi)^\star(\hat{p}) v_h, v_h\ra\leq -\lambda r^2 +o(r^2)
\end{equation}
for $r>0$ small and all $v_h\in \R^2$ with $|v_h|=1$. 
If $|\nabla_H \varphi(\hat{p})|\neq 0$, then we take $v_h\in \R^2$ such that $|v_h|=1$ and $\la v_h, \nabla_H \varphi(\hat{p})\ra=0$. This choice yields
\[
-{r^2\over 2} L[\varphi](\hat{p})
\leq  -\lambda r^2 +o(r^2).
\]
Dividing the inequality by $r^2$ and sending $r\to 0$, we obtain $-L[\varphi](\hat{p})\leq -2\lambda$.

When $\nabla_H \varphi(\hat{p})=0$, by \eqref{eq:uqc2} we immediately get
\[
-{r^2\over 2} L[\varphi](\hat{p})\leq -\lambda r^2+o(r^2).
\]
It is then clear that $-L[\varphi](\hat{p})\leq -2\lambda$ holds again. 
\end{proof}
Together with Theorem \ref{sufficient1-heis}, Proposition \ref{prop uqc vis} shows that uniformly h-quasiconvex functions are h-quasiconvex. We do not know whether or not the reverse implication of Proposition \ref{prop uqc vis} holds.  We have the following result for functions of class $C^2(\Omega)$. 

\begin{prop}[H-quasiconvexity operator on smooth functions]\label{prop c2}
Let $\Omega$ be an open set in $\H$ and $f \in C^2(\Omega)$. Assume that $L[f] \ge 2 \lambda$ holds in $\Omega$ for some $\lambda>0$.  Then, for every compact set $K \subset \Omega$ and $\sigma \in (0,1)$, there exists $r_0 = r_0(K,\sigma) > 0$ such that
\begin{equation}\label{uni-hquasi-c2}
f(p) \le \max\{ f(p \cdot rv) , f(p \cdot r v^{-1})\} - \sigma\lambda r^2
\end{equation}
for all $0 < r < r_0$, $p \in K$ and $v \in \H_0$  with $|v| = 1$.
\end{prop}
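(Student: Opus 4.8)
The plan is to combine a second-order Taylor expansion of $f$ along horizontal line segments with a case analysis governed by how close $p$ is to the characteristic set $\{\nabla_H f=0\}$ and by the angle between the test direction $v$ and the horizontal tangent to the level set of $f$ through $p$. Since $f\in C^2$, no viscosity machinery enters; the whole point is to make the Taylor remainder and the ellipticity of $(\nabla_H^2 f)^\star$ near characteristic points uniform over $K$.

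First I would fix a compact neighborhood $K'$ of $K$ with $K'\subset\Omega$, let $\omega$ be a modulus of continuity for $(\nabla_H^2 f)^\star$ on $K'$, and set $M:=\sup_{K'}\|(\nabla_H^2 f)^\star\|$. For $p\in\H$ and $v=(v_h,0)\in\H_0$ with $|v_h|=1$, the curve $s\mapsto p\cdot(sv_h,0)$ is a horizontal line with $d_H(p\cdot(sv_h,0),p)=|s|$, and along it $\frac{d}{ds}f=\langle\nabla_H f,v_h\rangle$ and $\frac{d^2}{ds^2}f=\langle(\nabla_H^2 f)^\star v_h,v_h\rangle$ (the symmetrization in the definition of $(\nabla_H^2 f)^\star$ is exactly what makes the second identity hold). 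Taylor's formula with integral remainder then gives, uniformly for $p\in K$, $|v_h|=1$ and $0<r<r_0$ small enough that $p\cdot(sv_h,0)\in K'$ for $|s|\le r$,
\[
f(p\cdot rv^{\pm1})=f(p)\pm r\langle\nabla_H f(p),v_h\rangle+\tfrac{r^2}{2}\langle(\nabla_H^2 f)^\star(p)v_h,v_h\rangle+R^{\pm},\qquad |R^{\pm}|\le\tfrac{r^2}{2}\omega(r).
\]
As the larger of the two linear terms is $r\,|\langle\nabla_H f(p),v_h\rangle|$, proving \eqref{uni-hquasi-c2} reduces to showing
\[
r\,|\langle\nabla_H f(p),v_h\rangle|+\tfrac{r^2}{2}\langle(\nabla_H^2 f)^\star(p)v_h,v_h\rangle\ \ge\ \sigma\lambda r^2+\tfrac{r^2}{2}\omega(r)
\]
for all $p\in K$, $|v_h|=1$ and $r$ small.

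The crux is to extract from $L[f]\ge2\lambda$ a uniform lower bound on $\langle(\nabla_H^2 f)^\star(p)v_h,v_h\rangle$ near the characteristic set. Fix $\mu:=(1+\sigma)\lambda$, so $2\sigma\lambda<\mu<2\lambda$. A compactness argument gives $\delta>0$ such that $\langle(\nabla_H^2 f)^\star(p)v_h,v_h\rangle\ge\mu$ whenever $p\in K'$, $|\nabla_H f(p)|\le\delta$, $|v_h|=1$: otherwise one extracts $p_k\to p_*\in K'$ with $\nabla_H f(p_k)\to0$ and unit $v_k\to v_*$ with $\langle(\nabla_H^2 f)^\star(p_k)v_k,v_k\rangle<\mu$, and in the limit $\nabla_H f(p_*)=0$ but $\langle(\nabla_H^2 f)^\star(p_*)v_*,v_*\rangle\le\mu<2\lambda$, contradicting $L[f](p_*)\ge2\lambda$ — which at a characteristic point means precisely $(\nabla_H^2 f)^\star(p_*)\ge2\lambda I$. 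Next, in dimension two $L[f](p)=\langle(\nabla_H^2 f)^\star(p)\hat\xi^{\perp},\hat\xi^{\perp}\rangle$ with $\hat\xi:=\nabla_H f(p)/|\nabla_H f(p)|$ when $\nabla_H f(p)\ne0$ (the only unit vectors orthogonal to $\nabla_H f(p)$ are $\pm\hat\xi^{\perp}$); writing $v_h=c\hat\xi+s\hat\xi^{\perp}$ and using $\|(\nabla_H^2 f)^\star\|\le M$ on $K'$ yields
\[
\langle(\nabla_H^2 f)^\star(p)v_h,v_h\rangle\ \ge\ (1-c^2)\,2\lambda-c^2 M-2|c|M ,
\]
so there is $\tau\in(0,1)$, depending only on $\lambda,M,\sigma$, with $|c|\le\tau\ \Rightarrow\ \langle(\nabla_H^2 f)^\star(p)v_h,v_h\rangle\ge\mu$.

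Then I would conclude by a three-way split for $p\in K$, $|v_h|=1$. If $|\langle\nabla_H f(p),v_h\rangle|\ge\tau\delta$, the linear term dominates: the left side above is $\ge r\tau\delta-\tfrac{M}{2}r^2\ge(\sigma\lambda+1)r^2$ once $r<\tau\delta/(\tfrac{M}{2}+\sigma\lambda+1)$, comfortably beating $\sigma\lambda r^2+\tfrac{r^2}{2}\omega(r)$. If $|\langle\nabla_H f(p),v_h\rangle|<\tau\delta$, then either $|\nabla_H f(p)|\le\delta$ and the first part of the previous step applies, or $|\nabla_H f(p)|>\delta$ and then $|c|=|\langle\nabla_H f(p),v_h\rangle|/|\nabla_H f(p)|<\tau$ so the second part applies; in both subcases $\langle(\nabla_H^2 f)^\star(p)v_h,v_h\rangle\ge\mu$, hence the left side is $\ge\tfrac{\mu}{2}r^2$, and $\tfrac{\mu}{2}-\sigma\lambda=\tfrac{(1-\sigma)\lambda}{2}>0$ absorbs $\tfrac12\omega(r)$ for $r$ small. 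Taking $r_0=r_0(K,\sigma)$ below every threshold appearing above finishes the proof. The main obstacle is exactly this uniformity near the characteristic set: it forces the compactness argument, and it relies on $L[f]\ge2\lambda$ being assumed at characteristic points too (so that $(\nabla_H^2 f)^\star\ge2\lambda I$ there). The room between $\lambda$ and $\sigma\lambda$ with $\sigma<1$ is precisely what allows one to absorb the Taylor remainder together with the $O(\tau)$ and $O(|c|)$ perturbations in the ellipticity estimate.
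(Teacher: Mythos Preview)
Your proof is correct and follows essentially the same approach as the paper's. Both hinge on the Taylor expansion along horizontal segments and on the key observation that $L[f]\ge 2\lambda$ at characteristic points forces $(\nabla_H^2 f)^\star\ge 2\lambda I$ there, which by continuity gives a uniform Hessian lower bound $\ge(1+\sigma)\lambda$ in a neighborhood of the characteristic set. The only organizational difference is that the paper splits into two cases via the open set $\Omega_0=\{p\in K:(\nabla_H^2 f)^\star(p)>(1+\sigma)\lambda I\}$ (and handles $K\setminus\Omega_0$ with a single Young-type estimate on the parallel/perpendicular decomposition of $v_h$), whereas you thread the same estimates through a three-way split governed by the thresholds $\delta$ and $\tau$; your compactness extraction of $\delta$ is exactly the paper's observation that $K\setminus\Omega_0$ stays a positive distance from $\{\nabla_H f=0\}$.
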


\begin{proof}
Fix $p\in K$ arbitrarily. We can find $r_0>0$ small such that $p\cdot r v, p\cdot rv^{-1}$ 
staying in $\Omega$ for all $r\leq r_0$.
Since $f\in C^2(\Omega)$, by Taylor expansion, there exists a continuous increasing function $\omega_K: [0, r_0]\to [0, \infty)$ with $\omega_K(0)=0$ depending on the uniform continuity of $\nabla^2_H f$ in $K$ such that
\begin{equation}\label{uni-hquasi-taylor}
\begin{aligned}
&\left| f(p \cdot rv) -f(p) - r \langle \nabla_H f(p), v_h \rangle - \frac{r^2}{2} \langle  (\nabla_H^2 f)^\star (p) v_h,v_h \rangle\right| \leq  r^2 \omega_{K}(r), \\
&\left| f(p \cdot rv^{-1}) - f(p) + r \langle \nabla_H f(p), v_h \rangle -\frac{r^2}{2} \langle  (\nabla_H^2 f)^\star(p) v_h,v_h \rangle\right| \leq r^2\omega_{K}(r)
\end{aligned}
\end{equation}
 for any $0<r<r_0$ and $v=(v_h, 0)\in \H_0$ with $v_h\in \R^2$ satisfying $|v_h|=1$.
Let us consider the set
\[
\Omega_0 := \{p \in K: (\nabla_H^2 f)^\star (p) > (1+\sigma) \lambda I\},
\]
which is an open subset of $K$ and contains $\{p \in K: \nabla_H f(p) = 0\}$. Suppose that $p \in \Omega_0$. Then by \eqref{uni-hquasi-taylor} we obtain
\[
\max\{ f(p \cdot rv) , f(p \cdot r v^{-1})\} \ge f(p) + \frac{(1+\sigma) \lambda r^2}{2}  - r^2 \omega_{K}(r).
\]
for any $0<r<r_0$ and $v\in \H_0$ with $|v|=1$. Letting $r_0>0$ further small such that $\omega_K(r_0)<(1-\sigma)/2$, we are led to 
the desired inequality \eqref{uni-hquasi-c2}.

If $p\in K \setminus \Omega_0$, then there exists a constant $\varepsilon_0 = \varepsilon_0(K,\sigma) > 0$ such that we have $|\nabla_H f(p)| \ge \varepsilon_0$. 
We can write $v_h=v_1 + v_{2}$, where $\la v_1, v_2\ra=\la v_2, \nabla_H f(p)\ra=0$; in other words, $v_h$ is decomposed into the components $v_1$ parallel to $\nabla_H f(p)$ and $v_2$ orthogonal to $\nabla_H f(p)$. It follows immediately that 
\begin{equation}\label{uni-hquasi-taylor2}
|v_1|^2+|v_2|^2=1,
\end{equation}
\begin{equation}\label{uni-hquasi-taylor3}
|\la \nabla_H f(p), v_h \ra|=|\la \nabla_H f(p), v_1\ra| =|\nabla_H f(p)| |v_1|\geq \vep_0 |v_1|
\end{equation}
and 
\[
\la  (\nabla_H^2 f)^\star (p) v_2, v_2 \ra \geq 2\lambda |v_2|^2.
\]
 Note that there exists $C>0$ depending on $\sigma>0$ and  the uniform bound of $\nabla^2_H f$ in $K$ such that 
\[
\left|\la  (\nabla_H^2 f)^\star (p) v_1, v_2 \ra +{1\over 2}\la  (\nabla_H^2 f)^\star (p) v_1, v_1 \ra  \right|\leq \frac{1-\sigma}{2}\lambda |v_2|^2+ C|v_1|^2
\]
for all $v_h = v_1 + v_2 \in \R^2$ with $|v_h|=1$. 
Then \eqref{uni-hquasi-taylor} implies that
\[
\begin{aligned}
f(p \cdot rv) &\geq  f(p) + r \langle \nabla_H f(p), v_1 \rangle+\lambda  r^2 |v_2|^2 -\frac{1-\sigma}{2}\lambda r^2 |v_2|^2 - Cr^2|v_1|^2 - r^2\omega_K(r),\\
f(p \cdot rv^{-1}) &\geq  f(p) - r \langle \nabla_H f(p), v_1 \rangle+\lambda r^2|v_2|^2 -\frac{1-\sigma}{2}\lambda r^2 |v_2|^2- Cr^2|v_1|^2 -r^2\omega_K(r)
\end{aligned}
\]
for all $0<r<r_0$ and $v=(v_h, 0)\in \H_0$ with $|v|=1$. As a result, by \eqref{uni-hquasi-taylor2} and \eqref{uni-hquasi-taylor3} we are led to 
\[
\begin{aligned}
&\max\{ f(p \cdot rv) , f(p \cdot r v^{-1})\} - f(p) \\
& \geq r| \langle \nabla_H f(p), v_1 \rangle|  +\frac{1+\sigma}{2} \lambda r^2 |v_2|^2- Cr^2|v_1|^2 - r^2\omega_K(r)\\
& \geq \sigma \lambda r^2 + \left(\vep_0   - (\sigma \lambda   + C  + \omega_K(r) )r \right) r|v_1| + \left(\frac{1 - \sigma}{2} \lambda - \omega_K(r)\right) r^2 |v_2|^2. \\
\end{aligned}
\]
Taking $r_0>0$ sufficiently small so that 
\[
\vep_0 \geq \left(\sigma \lambda +C+\omega_K(r_0)\right)r_0 \quad \mbox{and} \quad \frac{1 - \sigma}{2} \lambda \ge \omega_K(r_0),
\]
we obtain \eqref{uni-hquasi-c2} again for any $0<r<r_0$ and $v\in \H_0$ with $|v|=1$. Since our estimates above hold uniformly for all  $p\in K$, we complete the proof. 
\end{proof}

\section{Application to convexity preserving for horizontal curvature flow}

\subsection{The game-theoretic approximation}

For the reader's convenience, we first recall the definition of viscosity solutions of \eqref{mcf} below. We write the horizontal curvature operator as 
\[
F(\xi, X)= -\tr\left(I-{\xi\otimes \xi\over |\xi|^2}\right)X \quad \text{for $(\xi, X)\in \R^2\times \S^2$ with $\xi\neq 0$.}
\]
Let $F^\ast$ and $F_\ast$ denote the upper and lower semicontinuous envelopes in $\R^2\times \S^2$ respectively. 
\begin{defi}[Solutions of horizontal curvature flow]\label{def mcf}
A locally bounded function $u\in USC(\H\times (0, \infty))$ (resp., $u\in LSC(\H\times (0, \infty))$) is said to be a viscosity subsolution (resp., viscosity supersolution) of \eqref{mcf} if whenever $u-\varphi$ achieves a strict local maximum (resp., strict local minimum) at $(\hat{p}, \hat{t})\in \H\times (0, \infty)$ for a smooth function $\varphi: \H\times (0, \infty)\to \mathbb{R}$, we have
\[
\begin{aligned}
&\varphi_t(\hat{p}, \hat{t}) +F_\ast(\nabla_H \varphi(\hat{p}, \hat{t}), \nabla_H^2 \varphi(\hat{p}, \hat{t})) \leq 0 \\
& \left(\text{resp., } \quad \varphi_t(\hat{p}, \hat{t}) +F^\ast(\nabla_H \varphi(\hat{p}, \hat{t}), \nabla_H^2 \varphi(\hat{p}, \hat{t})) \geq 0 \right).
\end{aligned}
\]
A function $u\in C(\H\times (0, \infty))$ is called a viscosity solution of \eqref{mcf} if it is both a viscosity subsolution and a viscosity supersolution of \eqref{mcf}.
\end{defi}

The definition above employs the semicontinuous envelopes to overcome the singularity of $F(\xi, X)$ at $\xi=0$, which essentially corresponds to the use of $L^\ast$ in our preceding study of h-quasiconvexity. Note that for any smooth $f: \H\to \R$ and $p\in \H$, we have 
\[
F(\nabla_H f(p), \nabla_H^2 f(p))=-L[f](p) 
\]
by \eqref{sec operator heis2} if $\nabla_H f(p)\neq 0$, while
\[
F_\ast(\nabla_H f(p), \nabla_H^2 f(p))=-L^\ast[f](p)
\]
holds even if $\nabla_H f(p)=0$. Such connection enables us to utilize $L^\ast$ to investigate h-quasiconvexity preserving property for the horizontal curvature flow in Section~\ref{sec:preserving}. 

Let us next review the game-theoretic approach to \eqref{mcf}. The game starts at a given point $p\in \H$ with a fixed duration $t\geq 0$. The step size is denoted by $\vep$ and the total number of steps is $[t/\vep^2]$. Two players play the game, following the repeated rules below.
\begin{itemize}
\item Player I chooses a direction $v\in \H_0$ with $|v|=1$.
\item Player II determines a value $b=\pm1$. 
\item Once the decisions are made, the game position move from the current position $p$ to $p\cdot \sqrt{2}\vep bv$. 
\end{itemize} 
We denote by $y_k$ the game position after $k$ steps. Player I and Player II are trying to minimize and maximize the value $u_0(y_N)$ respectively. The value function is defined to be
\[
u_\vep(p, t)=\min_{v_1\in \H_0, |v_1|=1}\max_{b_1=\pm 1}\ldots \min_{v_N\in \H_0, |v_N|=1}\max_{b_N=\pm 1} u_0(y_N).
\]
It is shown in \cite{FLM1} that $u_\vep$ converges locally uniformly to the unique solution $u$ of \eqref{mcf}\eqref{initial} in $\H\times [0, \infty)$ under the assumptions that $u_0$ is rotationally symmetric with respect to $z$-axis and takes constant value outside a compact set. These additional assumptions are actually used only to prove (CP). One can write a general result in the following way. 
\begin{thm}[Game-theoretic approximation]
Suppose (CP) holds. Let $u_\vep$ be the value function introduced above with a given $u_0\in C(\H)$. Assume that there exist $C>0$ and a compact set $K\subset \H$ such that $u_0=C$ in $\H\setminus K$. Then $u_\vep\to u$ locally uniformly as $\vep\to 0$, where $u$ is the unique viscosity solution of \eqref{mcf}\eqref{initial}. 
\end{thm}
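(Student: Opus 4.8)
The plan is to run the Barles--Souganidis convergence scheme for the monotone approximation $u_\vep$, following \cite{FLM1}; the crucial point is that in \cite{FLM1} the rotational symmetry and the compact-support hypotheses are invoked \emph{only} to prove the comparison principle, whereas here it is the standing assumption (CP), so all the remaining purely analytic steps carry over. First I would record the exact dynamic programming principle
\[
u_\vep(p,t)=\min_{v\in\H_0,\ |v|=1}\ \max_{b=\pm1}\ u_\vep\big(p\cdot\sqrt{2}\,\vep\, bv,\ t-\vep^2\big)\qquad (t\ge\vep^2),
\]
together with $u_\vep(p,t)=u_0(p)$ for $0\le t<\vep^2$, which is immediate from the definition of the value function. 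Since each value $u_\vep(p,t)$ equals $u_0(q)$ for some game endpoint $q$, one gets the uniform bound $\inf_\H u_0\le u_\vep\le\sup_\H u_0$ at once ($u_0$ being bounded, as it is continuous and eventually equal to $C$). I would then use a barrier/comparison argument — sandwiching $u_\vep$ between the value functions of suitable radially symmetric data whose horizontal curvature evolution is explicit (shrinking gauge balls, as in \cite{FLM1}) — to show that for every $T>0$ there is a compact set $K_T\subset\H$ with $u_\vep=C$ on $(\H\setminus K_T)\times[0,T]$ for all small $\vep$, and to obtain the modulus of continuity at $t=0$ that controls the initial trace.

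Next I would pass to the half-relaxed limits
\[
\ol u(p,t)=\limsup_{\vep\to0,\ (q,s)\to(p,t)}u_\vep(q,s),\qquad \ul u(p,t)=\liminf_{\vep\to0,\ (q,s)\to(p,t)}u_\vep(q,s),
\]
which by the preceding step are finite, satisfy $\ul u\le\ol u$, equal $C$ off $K_T\times[0,T]$, and satisfy $\ol u(\cdot,0)\le u_0\le\ul u(\cdot,0)$. The heart of the proof is the consistency step: $\ol u$ is a viscosity subsolution and $\ul u$ a viscosity supersolution of \eqref{mcf} in the sense of Definition \ref{def mcf}. For the subsolution property I would take a smooth $\varphi$ with $\ol u-\varphi$ attaining a strict local maximum at $(\hat p,\hat t)$ with $\hat t>0$, pick near-maximizers $(p_\vep,t_\vep)\to(\hat p,\hat t)$ of $u_\vep-\varphi$, apply the dynamic programming principle at $(p_\vep,t_\vep)$, and Taylor-expand $\varphi\big(p_\vep\cdot\sqrt{2}\,\vep\, bv,\ t_\vep-\vep^2\big)$ to second order in $\vep$; the left-invariant fields $X_1,X_2$ produce exactly $\nabla_H\varphi$ at order $\vep$ and $(\nabla_H^2\varphi)^\star$ at order $\vep^2$, the lowest-order cross terms cancelling. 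Choosing, in the inner $\min$ over $v$, a direction orthogonal to $\nabla_H\varphi(p_\vep,t_\vep)$ kills the $O(\vep)$ term and leaves $\vep^2 L[\varphi](p_\vep,t_\vep)$ at second order, so that dividing by $\vep^2$ and letting $\vep\to0$ yields $\varphi_t+F(\nabla_H\varphi,\nabla_H^2\varphi)\le0$ when $\nabla_H\varphi(\hat p,\hat t)\neq0$; when $\nabla_H\varphi(\hat p,\hat t)=0$ the same expansion, together with $F_\ast(0,X)=-\lambda_{\max}(X)$ (cf.\ the remark after Definition \ref{sub heis}), gives $\varphi_t+F_\ast(\nabla_H\varphi,\nabla_H^2\varphi)\le0$. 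The supersolution statement for $\ul u$ is symmetric, now using that the optimal direction for the inner problem is asymptotically orthogonal to $\nabla_H\varphi$.

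Finally, $\ol u\in USC$ is a bounded subsolution and $\ul u\in LSC$ a bounded supersolution of \eqref{mcf}, both equal to $C$ outside a compact set on each slab $\H\times[0,T]$, with $\ol u\le\ul u$ on $\H\times\{0\}$; hence (CP) gives $\ol u\le\ul u$ on $\H\times[0,\infty)$, and together with the trivial inequality $\ul u\le\ol u$ this forces $\ol u=\ul u=:u\in C(\H\times[0,\infty))$, which is then the unique viscosity solution of \eqref{mcf}\eqref{initial}; the coincidence of the two semilimits is precisely the asserted local uniform convergence $u_\vep\to u$. I expect the main obstacle to be the consistency step at characteristic points, where $\nabla_H\varphi$ vanishes and $L[\varphi]$ is discontinuous: one must carefully exploit the semicontinuous envelope $F_\ast$ (equivalently $L^\ast$) in a second-order expansion that respects the non-commutative group multiplication — the same singularity already discussed in connection with Theorem \ref{main thm} — while also checking, via the first-step barriers, that the relaxed limits attain the initial datum and stay constant near spatial infinity so that (CP) genuinely applies.
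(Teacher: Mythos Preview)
Your proposal is correct and matches the paper's approach: the paper does not write out its own proof but simply refers to \cite{FLM1}, remarking that the rotational-symmetry hypothesis there is invoked only to obtain the comparison principle, which is precisely your assumption (CP), and that the remaining steps (DPP, Taylor expansion, half-relaxed limits, consistency at the level of $F_\ast$/$F^\ast$) go through unchanged. Your Barles--Souganidis outline is exactly that argument; the only cosmetic point is that, because gauge balls shrink under the horizontal curvature flow, the barrier argument in \cite{FLM1} actually yields a single compact set $K$ (not just $K_T$) on which $u_\vep=C$ for all $t\ge 0$, so (CP) applies in the form stated.
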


The key ingredient of the game-theoretic approach is the so-called dynamic programming principle (DPP), which is expressed as follows: 
\begin{equation}\label{dpp}
u_\vep(p, t)=\min_{v\in \H_0, |v|=1} \max_{b=\pm 1} u_\vep(p\cdot \sqrt{2}\vep bv, t-\vep^2) \quad \text{for all $p\in \H$ and $t\geq \vep^2$.}
\end{equation}
One can apply Taylor expansion to obtain \eqref{mcf} formally. The rigorous proof, using the notion of viscosity solutions, can be conducted in the same style. We omit the details, since the argument is somewhat similar to the proof of Proposition \ref{prop uqc vis}; see also the proof of Proposition \ref{prop uni-qc}. (The verification of supersolutions is slightly different, as shown in \cite{FLM1}.)

In general, $u_\vep$ is certainly not a continuous function in $\H\times [0, \infty)$. However, we see that it is always continuous in space due to the explicit iteration formula \eqref{dpp}. 

\subsection{H-quasiconvexity preserving property}\label{sec:preserving}

Let us study the h-quasiconvexity of solution in space. We begin with the case when the initial value is uniformly h-quasiconvex in $\H$. 
\begin{prop}[Iteration of uniform h-quasiconvexity]\label{prop time-mono}
Assume that ${u}_0\in C(\H)$ is uniformly h-quasiconvex in the sense of \eqref{uni-qc} with $\Omega=\H$, $r_0>0$ and $\lambda>0$. Let ${u}_\vep$ be the game value with step size $0<\vep<r_0/\sqrt{2}$. Then $u_\vep$ satisfies 
\begin{equation}\label{eq:mono-game}
u_\vep(p, t)\geq u_\vep (p, s)+2\lambda \vep^2[(t-s)/\vep^2]
\end{equation}
for all $p\in \H$ and $t\geq s\geq 0$. 
\end{prop}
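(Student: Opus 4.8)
The plan is to reduce everything to the dynamic programming principle \eqref{dpp} and exploit the monotonicity of the one-step game operator, rather than trying to show that $u_\vep(\cdot,t)$ is itself uniformly h-quasiconvex at every time step (a $\min$ of uniformly h-quasiconvex functions need not be one, so that route looks problematic). Introduce the operator $T$ acting on functions $\phi:\H\to\R$ by
\[
T[\phi](p)=\min_{v\in\H_0,\,|v|=1}\ \max_{b=\pm1}\ \phi\bigl(p\cdot\sqrt2\,\vep bv\bigr),
\]
which is well defined and real-valued on the relevant functions since, after finitely many steps, the game positions stay in a bounded subset of $\H$. Unwinding the definition of the game value (equivalently, iterating \eqref{dpp}), one has $u_\vep(\cdot,t)=T^{N}[u_0]$ with $N=[t/\vep^2]$, and $u_\vep(\cdot,t)=u_0$ when $0\le t<\vep^2$. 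Two elementary properties of $T$ do all the work: $T$ is monotone ($\phi\le\psi$ implies $T[\phi]\le T[\psi]$), and $T$ commutes with the addition of constants ($T[\phi+c]=T[\phi]+c$).

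First I would establish the one-step gain $T[u_0]\ge u_0+2\lambda\vep^2$ on $\H$. Fix $p\in\H$ and $v\in\H_0$ with $|v|=1$. Since $v^{-1}=-v$ for $v\in\H_0$, the inner maximum equals $\max\{u_0(p\cdot\sqrt2\vep v),\,u_0(p\cdot\sqrt2\vep v^{-1})\}$, and applying \eqref{uni-qc} with $r=\sqrt2\,\vep$ — which lies in $(0,r_0)$ precisely because of the standing hypothesis $\vep<r_0/\sqrt2$ — gives
\[
\max\{u_0(p\cdot\sqrt2\vep v),\,u_0(p\cdot\sqrt2\vep v^{-1})\}\ \ge\ u_0(p)+\lambda(\sqrt2\,\vep)^2=u_0(p)+2\lambda\vep^2.
\]
Since this holds for every admissible $v$, taking the minimum over $v$ yields $T[u_0](p)\ge u_0(p)+2\lambda\vep^2$.

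Next I would upgrade this to $T^{j+1}[u_0]\ge T^{j}[u_0]+2\lambda\vep^2$ for every integer $j\ge0$, by induction: the case $j=0$ is the previous step, and applying the monotone operator $T$ to $T^{j}[u_0]\ge T^{j-1}[u_0]+2\lambda\vep^2$ and using constant-shift invariance produces the case $j$. Telescoping from $M$ to $N$ then gives $T^{N}[u_0]\ge T^{M}[u_0]+2\lambda\vep^2(N-M)$ for all integers $N\ge M\ge0$. Finally, with $N=[t/\vep^2]$ and $M=[s/\vep^2]$, the elementary floor inequality $[t/\vep^2]-[s/\vep^2]\ge[(t-s)/\vep^2]$ valid for $t\ge s\ge0$ combines with the previous display to give exactly \eqref{eq:mono-game}.

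The proof is mostly bookkeeping, so the "main obstacle" is conceptual: recognizing that one should propagate the \emph{scalar} one-step estimate $T[u_0]\ge u_0+2\lambda\vep^2$ through the iteration via monotonicity of the DPP operator, instead of attempting to preserve the uniform h-quasiconvexity property along the game dynamics. The minor technical points to handle carefully are: the degenerate regime $0\le t<\vep^2$ (zero steps), where \eqref{dpp} is not available but the claim is trivial; the identity $v^{-1}=-v$ on $\H_0$ needed to match the $b=\pm1$ choice with the two endpoints in \eqref{uni-qc}; and using $\sqrt2\,\vep<r_0$ at exactly the point where \eqref{uni-qc} is invoked.
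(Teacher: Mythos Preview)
Your proof is correct and follows essentially the same approach as the paper: establish the one-step gain $T[u_0]\ge u_0+2\lambda\vep^2$ from uniform h-quasiconvexity, then propagate it through the DPP iteration using monotonicity (and constant-shift invariance) of the game operator. Your treatment is in fact more explicit than the paper's, which leaves the telescoping and the floor-function inequality $[t/\vep^2]-[s/\vep^2]\ge[(t-s)/\vep^2]$ implicit in the phrase ``further iterations.''
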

\begin{proof}
Adopting  \eqref{uni-qc} holds for $f=u_0$, we can obtain, for $\vep<r_0/\sqrt{2}$, to get
\begin{equation}\label{eq:mono2}
u_\vep(p, \vep^2)=\min_{v\in \H_0, |v|=1} \max_{b=\pm 1} u_0(p\cdot \sqrt{2}\vep v)\geq u_0(p)+2\lambda \vep^2
\end{equation}
for all $p\in \H$. By the monotonicity of the game value with respect to the terminal cost, 
we have, for all $p\in \H$, 
\[
\begin{aligned}
u_\vep(p, 2\vep^2) &= \min_{v\in \H_0, |v|=1} \max_{b=\pm 1} u_\vep(p\cdot \sqrt{2}\vep bv, \vep^2)\\
&\geq \min_{v\in \H_0, |v|=1} \max_{b=\pm 1} u_0(p\cdot \sqrt{2}\vep bv) +2\lambda \vep^2= u_\vep(p, \vep^2)+2\lambda \vep^2.
\end{aligned}
\]
We can continue repeating the argument to deduce
\[
u_\vep(p, t)\geq u_\vep (p, t-\vep^2)+2\lambda \vep^2
\]
for all $t\geq 0$ and $p\in \H$.
Then \eqref{eq:mono-game} follows immediately from further iterations. 
\end{proof}


Letting $\vep\to 0$, we can get the uniform h-quasiconvexity of solution in space. In order for the game value to be locally bounded uniformly in $\vep>0$, we need to additionally impose a growth condition at space infinity as in \eqref{eq growth}. One can actually weaken the condition for more general initial values. 

\begin{prop}[Uniform h-quasiconvexity preserving]\label{prop uni-qc}
Assume that $u_0\in C(\H)$ is uniformly h-quasiconvex in $\H$ with parameters $r_0>0$ and $\lambda>0$. Suppose that there exists $L>0$ such that $u_0(p)\leq L(|p|_G^4+1)$ holds for all  $p\in \H$. Let $u_\vep$ be the game value with $\vep>0$ small. For any $t>0$, let $U^t$ be the relaxed upper limit of $u_\vep(\cdot, t)$, namely, 
\[
U^t(p)=\limsups_{\vep\to 0} u_\vep(\cdot, t)(p):= \lim_{r\to 0} \sup\{u_\vep(q, t): q\in B_r(p), \vep<r\}.
\]
Then for any fixed $t>0$, $U^t\in USC(\H)$ is locally bounded and satisfies $\ol{L(U^t)}\geq 2\lambda$ in the viscosity sense in $\H$. In particular, $U^t$ is h-quasiconvex in $\H$. 
\end{prop}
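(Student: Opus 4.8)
The plan is to prove that $U^t$ satisfies $\ol{L(U^t)} \ge 2\lambda$ in the viscosity sense by transferring the discrete inequality \eqref{eq:mono-game} through the half-relaxed limit and then doing a Taylor expansion at a test function's maximum point. First I would fix $t > 0$ and establish that $U^t$ is well-defined, locally bounded and upper semicontinuous: this uses the growth bound $u_0(p) \le L(|p|_G^4 + 1)$ together with a one-step estimate showing that $u_\vep(\cdot, t)$ stays locally uniformly bounded above in $\vep$ (the game moves the position by at most $O(\sqrt{2}\vep \cdot [t/\vep^2]^{1/2}) = O(\sqrt{t})$ in an appropriate gauge sense, so the value at $(p,t)$ is controlled by $\sup u_0$ over a fixed neighborhood of $p$); local boundedness from below is immediate since Player I can always keep the final value bounded by a similar argument, or one simply notes $u_\vep(\cdot,t) \ge \inf u_0$ on the relevant compact set. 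Upper semicontinuity of the relaxed upper limit is automatic by construction.

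Next I would take a smooth $\varphi$ such that $U^t - \varphi$ has a strict local maximum at some $\hat p \in \H$, and I must produce the inequality $-\ol{L[\varphi]}(\hat p) \le -2\lambda$. By a standard argument in the theory of half-relaxed limits, there exist points $p_\vep \to \hat p$ (along a subsequence $\vep \to 0$) such that $u_\vep(\cdot, t) - \varphi$ attains a local maximum at $p_\vep$ and $u_\vep(p_\vep, t) \to U^t(\hat p)$. The crucial observation is that \eqref{eq:mono-game} applied with $s = t - \vep^2$ (and $\vep < r_0/\sqrt 2$) combined with the spatial dynamic programming principle \eqref{dpp} gives
\[
u_\vep(p_\vep, t) - 2\lambda \vep^2 \ge u_\vep(p_\vep, t - \vep^2) = \min_{v \in \H_0, |v| = 1} \max_{b = \pm 1} u_\vep(p_\vep \cdot \sqrt 2 \vep b v, t - \vep^2),
\]
and then using \eqref{eq:mono-game} once more on the right to replace $u_\vep(\cdot, t-\vep^2)$ by $u_\vep(\cdot, t) - 2\lambda\vep^2$ at the shifted points, so that after cancellation
\[
u_\vep(p_\vep, t) \ge \min_{v \in \H_0, |v| = 1} \max_{b = \pm 1} u_\vep(p_\vep \cdot \sqrt 2 \vep b v, t).
\]
Wait — this needs care: \eqref{eq:mono-game} bounds $u_\vep(\cdot, t)$ from below by $u_\vep(\cdot, t - \vep^2)$, which is the wrong direction for the substitution above, so instead I would argue directly from Proposition~\ref{prop time-mono} that, since $u_0$ is uniformly h-quasiconvex, each spatial slice $u_\vep(\cdot, s)$ is itself uniformly h-quasiconvex with the same constants (this is exactly \eqref{eq:mono2} iterated, noting the DPP \eqref{dpp} preserves the property \eqref{uni-qc} step by step), i.e.\ $u_\vep(p, s) \le \max\{u_\vep(p \cdot rv, s), u_\vep(p \cdot r v^{-1}, s)\} - \lambda r^2$ for $r < r_0$. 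This is the clean statement I actually want, and I would state and prove it as a short lemma (one-line induction on the number of game steps using monotonicity of the game operator).

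With that spatial uniform h-quasiconvexity of $u_\vep(\cdot, t)$ in hand, I would use the local maximum of $u_\vep(\cdot, t) - \varphi$ at $p_\vep$ to write, for $v = (v_h, 0) \in \H_0$ with $|v_h| = 1$ and $r$ small,
\[
\varphi(p_\vep) \le \max\{\varphi(p_\vep \cdot rv), \varphi(p_\vep \cdot r v^{-1})\} - \lambda r^2,
\]
and then Taylor-expand exactly as in the proof of Proposition~\ref{prop uqc vis}: choosing $v_h \perp \nabla_H \varphi(p_\vep)$ (or handling $\nabla_H \varphi(p_\vep) = 0$ separately) yields $-\tfrac{r^2}{2} L[\varphi](p_\vep) \le -\lambda r^2 + o(r^2)$, hence $L[\varphi](p_\vep) \ge 2\lambda$. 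Letting $\vep \to 0$ along the subsequence and using continuity of $\nabla_H\varphi$ and $(\nabla_H^2\varphi)^\star$ together with $p_\vep \to \hat p$, I get $\liminf_{\vep} L[\varphi](p_\vep) \ge 2\lambda$, and since $\ol{L[\varphi]}(\hat p) = \limsup_{q \to \hat p} L[\varphi](q) \ge \limsup_\vep L[\varphi](p_\vep) \ge 2\lambda$ whenever the $p_\vep$ are distinct from $\hat p$ (a point one must address — if $p_\vep = \hat p$ eventually, then $L[\varphi](\hat p) \ge 2\lambda$ directly, which is even stronger), this gives $-\ol{L[\varphi]}(\hat p) \le -2\lambda$ as required. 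The final h-quasiconvexity of $U^t$ then follows immediately from Theorem~\ref{sufficient1-heis} applied with the strict inequality $\ol{L[U^t]} > 0$ (implied by $\ge 2\lambda$). The main obstacle I anticipate is the bookkeeping around the half-relaxed limit argument — specifically ensuring $u_\vep(\cdot, t)$ is locally uniformly bounded so that $U^t$ is finite (this is where the quartic growth hypothesis and a quantitative control of how far the game can travel in time $t$ are genuinely needed), and making sure the perturbed test function argument produces maximizers $p_\vep$ converging to $\hat p$ with the right value convergence; the Taylor expansion itself is routine given Proposition~\ref{prop uqc vis}.
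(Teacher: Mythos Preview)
There is a genuine gap in your pivot. After correctly abandoning the first attempt, you assert that each spatial slice $u_\vep(\cdot,s)$ is itself uniformly h-quasiconvex with the \emph{same} parameters $r_0,\lambda$, and that this follows by a ``one-line induction on the number of game steps using monotonicity of the game operator.'' This induction does not go through in $\H$. The game operator $T[f](p)=\min_{|v|=1}\max_{b=\pm1}f(p\cdot\sqrt{2}\vep\,bv)$ is built from right translations by horizontal vectors, and the uniform h-quasiconvexity inequality also involves right translations $p\mapsto p\cdot rw$. Because $(p\cdot rw)\cdot\sqrt{2}\vep\,bv\neq(p\cdot\sqrt{2}\vep\,bv)\cdot rw$ unless $v$ and $w$ are parallel, you cannot transport the inequality for $f$ into one for $T[f]$ by simple substitution. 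Even setting aside non-commutativity, the outer $\min_v$ is taken \emph{independently at each point}, and a pointwise minimum of uniformly quasiconvex functions is not quasiconvex in general (think of $\min\{x^2,(x-1)^2\}$ in one variable). So the lemma you propose is not a one-liner and may well be false at full strength.

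The paper avoids this entirely. It does \emph{not} claim uniform h-quasiconvexity of $u_\vep(\cdot,t)$ for all $r<r_0$; it only needs the inequality at the single scale $r=\sqrt{2}\vep$, and that follows directly from combining the DPP \eqref{dpp} with the time monotonicity of Proposition~\ref{prop time-mono}. Indeed, DPP gives $u_\vep(p,t)=\min_{v}\max_{b}u_\vep(p\cdot\sqrt{2}\vep\,bv,\,t-\vep^2)$, while \eqref{eq:mono-game} yields $u_\vep(q,t-\vep^2)\le u_\vep(q,t)-2\lambda\vep^2$ for every $q$; substituting the latter into the former produces
\[
u_\vep(p,t)+2\lambda\vep^2\le \min_{v}\max_{b}u_\vep(p\cdot\sqrt{2}\vep\,bv,\,t),
\]
which is precisely \eqref{eq:mono4}. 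From here the almost-maximizer $p_\vep$ and Taylor expansion proceed exactly as you sketch, with $r=\sqrt{2}\vep\to0$. Your first attempt was in fact only one step away from this: you applied the DPP at the wrong time level (writing $u_\vep(p_\vep,t-\vep^2)=\min_v\max_b\,u_\vep(p_\vep\cdot\sqrt{2}\vep\,bv,\,t-\vep^2)$ instead of starting from $u_\vep(p_\vep,t)$), and then misread the direction of the substitution. Had you started from the DPP for $u_\vep(\cdot,t)$ and bounded the right-hand side \emph{above} via time monotonicity, you would have arrived at \eqref{eq:mono4} directly.
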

\begin{proof}
Under the growth condition \eqref{eq growth}, we can follow \cite[Lemma 5.3]{FLM1} to obtain the following  local boundedness of $u^\vep$ uniform in $\vep>0$: for any compact set $K\subset \H\times [0, \infty)$, there exists $C_K>0$ such that $\sup_{K}u_\vep\leq C_K$ for all $\vep>0$.

From the definition of the relaxed upper limit, it is then straightforward to have $U^t \in USC(\H)$. Moreover, it follows from \eqref{eq:mono-game} that for any compact set $K\subset \H\times [0, \infty)$, there exists $C_K'>0$ such that $\inf_K u_\vep \geq -C_K' $ for all $\vep>0$. Then it follows from our assumption that $U^t$ is locally bounded.

Furthermore, the result \eqref{eq:mono-game} in Proposition \ref{prop time-mono} implies that
\begin{equation}\label{eq:mono4}
u_\vep(p, t)+2\lambda \vep^2\leq \min_{v\in \H_0, |v|=1} \max_{b=\pm 1} u_\vep(p\cdot \sqrt{2}\vep bv, t).
\end{equation}
for all $(p, t)\in \H\times (0, \infty)$ and $\vep>0$ small. 

Fix $t>0$ arbitrarily. Suppose that there exist $p_0\in \H$ and smooth $\varphi$ and $U^t-\varphi$ attains a strict maximum at $p_0$. Then by the definition of $U^t$, there exists a sequence $p_\vep$, still indexed by $\vep>0$, such that as $\vep\to 0$, $p_\vep\to p_0$
and  
\begin{equation}\label{eq:uni-qc1}
u_\vep(p_\vep, t)-\varphi(p_\vep)\geq \sup_{B_r(p_0)} (u_\vep(\cdot, t)-\varphi)-\vep^3
\end{equation}
for some $r>0$. 

In view of \eqref{eq:mono4} and \eqref{eq:uni-qc1}, we have 
\[
\varphi(p_\vep)+2\lambda \vep^2\leq \min_{v\in \H_0, |v|=1} \max_{b=\pm 1}\varphi(p_\vep\cdot \sqrt{2}\vep bv)+\vep^3
\]
for all $\vep>0$ small. Write $v = (v_h,0)$ with $v_h \in \R^2$.
An application of the Taylor expansion yields
\[
-\min_{|v_h|=1} \left\{\sqrt{2}\vep|\la \nabla_H \varphi(p_\vep), v_h\ra |+{\vep^2} \la (\nabla_H^2 \varphi(p_\vep))^\star v_h, v_h\ra\right\}\leq -2\lambda \vep^2 +o(\vep^2).
\]
We thus can use the same proof of Proposition \ref{prop uqc vis} to obtain that 
\[
-L[\varphi](p_\vep)\leq -2\lambda +o(1), 
\]
for $\vep>0$ small. As a result, we obtain that
\[
-\ol{L[\varphi]}(p) = -\limsup_{q\to p} L[\varphi](q) \leq -\limsup_{\vep \to 0} L[\varphi](p_\vep) \leq -2\lambda.
\]
This proves that $\ol{L(U^t)}\geq 2\lambda$ in the viscosity sense in $\H$.
Then the h-quasiconvex of $U^t$ in $\H$ follows from Theorem~\ref{sufficient1-heis}.  
\end{proof}

Our comparison principle (CP) holds only for bounded solutions taking constant value outside a compact set. In order to show Theorem \ref{cor uni-qc2}, we need to truncate the limit of the corresponding game values to obtain a unique solution that is h-quasiconvex in space and satisfies the required conditions in (CP). 


\begin{proof}[Proof of Theorem \ref{cor uni-qc2}]
Let $\hat{u}_\vep$ denote the game value corresponding to the terminal cost $\hat{u}_0$ and $\hat{U}^t$ denote its relaxed upper limit in the space variable. By \eqref{truncation}, it is not difficult to see from the game setting that 
\[
u_\vep(p, t)=\min\{\hat{u}_\vep(p, t), C\} \quad\text{for all $p\in \H$ and $t\geq 0$.}
\]
We have shown in Proposition \ref{prop uni-qc} that $\hat{U}^t$ is h-quasiconvex in $\H$ for all $t> 0$. Since $u_\vep$ converges locally uniformly to $u$, we have $u(\cdot, t)=\min\{\hat{U}^t, C\}$ in $\H$. This yields the h-quasiconvexity of $u(\cdot, t)$ in $\H$. Indeed, for any $p\in \H$ and $t>0$, by the h-quasiconvexity of $\hat{U}^t$,  we have
\[
\hat{U}^t(p)\leq \max\left\{\hat{U}^t(p\cdot h), \hat{U}^t(p\cdot h^{-1})\right\} 
\]
for any $h\in \H_0$. It follows that 
\[
\min\left\{\hat{U}^t(p), C\right\}\leq \max\left\{\min\left\{\hat{U}^t(p\cdot h), C\right\},\ \min\left\{\hat{U}^t(p\cdot h^{-1}), C\right\} \right\}, 
\]
which is equivalent to saying that 
\[
u(p, t)\leq \max\{u(p\cdot h, t),\ u(p\cdot h^{-1}, t)\}.
\]
This completes the proof.
\end{proof}

We next use approximation to consider general h-quasiconvexity preserving property. Assume that the initial value $u_0$ can be approximated by a sequence of functions $u_{0,j} \in C(\H)$, each of which satisfies the assumptions in Theorem \ref{cor uni-qc2}. 


\begin{thm}[H-quasiconvexity preserving property with approximation]\label{thm qc}
Suppose that (CP) holds. Let $C\in \R$ and $K_0\subset \H$ be a compact set. Let $u_0\in C(\H)$ be an h-quasiconvex function and $u_0\equiv C$ in $\H\setminus K_0$. Assume that there exists a sequence  $\hat{u}_{0, j}\in C(\H)$ uniformly h-quasiconvex in $\H$ satisfying the assumptions on $\hat{u}_0$ in Theorem \ref{cor uni-qc2}. 
Let ${u}_{0, j} =\min\{\hat{u}_{0, j}, C\}$. Assume that ${u}_{0, j} = C$ outside $K_0$ for all $j=1, 2, \ldots$ and 
\begin{equation}\label{ia3}
{u}_{0, j}\to u_0 \quad \text{uniformly in $\H$ as $j\to \infty$.}
\end{equation}
Let $u$ be the unique solution of \eqref{mcf}\eqref{initial}. Then, $u(\cdot, t)$ is h-quasiconvex in $\H$ for all $t\geq 0$.
\end{thm}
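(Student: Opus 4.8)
The plan is to argue by approximation, reducing everything to the case already settled in Theorem~\ref{cor uni-qc2}. For each $j$, the pair $(\hat u_{0,j}, u_{0,j})$ with $u_{0,j}=\min\{\hat u_{0,j},C\}$ meets precisely the hypotheses on $(\hat u_0,u_0)$ in Theorem~\ref{cor uni-qc2}: $\hat u_{0,j}$ is continuous, uniformly h-quasiconvex and satisfies the growth bound \eqref{eq growth}, while $u_{0,j}\in C(\H)$, $u_{0,j}\le C$, and $u_{0,j}\equiv C$ off the fixed compact $K_0$. Hence that theorem yields the unique solution $u_j$ of \eqref{mcf}\eqref{initial} with $u_j(\cdot,0)=u_{0,j}$, and $u_j(\cdot,t)$ is h-quasiconvex in $\H$ for every $t\ge 0$. (At $t=0$ this is just the h-quasiconvexity of $u_{0,j}$, clear since each sublevel set of $\min\{\hat u_{0,j},C\}$ is either a sublevel set of the h-quasiconvex function $\hat u_{0,j}$ or all of $\H$.) Granting $u_j(\cdot,t)\to u(\cdot,t)$ pointwise, h-quasiconvexity passes to the limit: for $p\in\H$, $q\in\H_p$ and $w\in[p,q]$, the inequality $u_j(w,t)\le\max\{u_j(p,t),u_j(q,t)\}$ survives sending $j\to\infty$.

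So the crux is the convergence $u_j\to u$, and the cleanest route is through the game-theoretic approximation. Let $u_{j,\vep}$ and $u_\vep$ be the game values with terminal costs $u_{0,j}$ and $u_0$; both terminal costs are continuous and equal to $C$ off $K_0$, so the game-theoretic approximation theorem gives $u_{j,\vep}\to u_j$ and $u_\vep\to u$ locally uniformly as $\vep\to 0$. On the other hand, because the value function is a nested $\min$--$\max$ of the terminal cost evaluated along game trajectories, it depends monotonically and $1$-Lipschitz-ly on the terminal cost in the sup norm, whence
\[
|u_{j,\vep}(p,t)-u_\vep(p,t)|\le \|u_{0,j}-u_0\|_{L^\infty(\H)}\qquad\text{for all }(p,t)\in\H\times[0,\infty),\ \vep>0.
\]
Letting $\vep\to 0$ and invoking \eqref{ia3} gives $\|u_j-u\|_{L^\infty(\H\times[0,\infty))}\le\|u_{0,j}-u_0\|_{L^\infty(\H)}\to 0$, so in particular $u_j(\cdot,t)\to u(\cdot,t)$ uniformly for each fixed $t$.

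An alternative is a direct comparison argument: with $\delta_j:=\|u_{0,j}-u_0\|_{L^\infty(\H)}$ one has $u_{0,j}\le\min\{u_0+\delta_j,\,C\}=:v_j(\cdot,0)$, and since $v_j:=\min\{u+\delta_j,C\}$ is a supersolution of \eqref{mcf} agreeing with $u_j$ (with value $C$) off $K_0$, (CP) gives $u_j\le u+\delta_j$; a symmetric truncation yields the reverse bound. The point worth flagging is that one must truncate by $C$ before applying (CP), since (CP) requires sub- and supersolution to share the \emph{same} exterior constant. Either way, this convergence step is the only substantive point, and it is where I expect the main obstacle to lie — not because it is deep, but because it is the unique place requiring a genuine estimate (with care about exterior constants in (CP), or about uniform-in-$\vep$ control in the game formulation) rather than a soft limit.

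Finally, fix $t\ge 0$. If $t=0$ the assertion is the assumed h-quasiconvexity of $u_0$. If $t>0$, take $p\in\H$, $q\in\H_p$ and $w\in[p,q]$ arbitrary; by Step~1 each $u_j(\cdot,t)$ is h-quasiconvex, so $u_j(w,t)\le\max\{u_j(p,t),u_j(q,t)\}$, and the pointwise convergence $u_j(\cdot,t)\to u(\cdot,t)$ established above lets us pass to the limit to obtain $u(w,t)\le\max\{u(p,t),u(q,t)\}$. Since $p,q,w$ are arbitrary, $u(\cdot,t)$ is h-quasiconvex in $\H$, completing the proof.
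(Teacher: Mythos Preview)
Your proposal is correct and follows essentially the same approach as the paper: apply Theorem~\ref{cor uni-qc2} to each approximant $u_{0,j}$, establish $u_j\to u$, and pass the h-quasiconvexity inequality to the limit. The only real difference is in the convergence step. The paper first observes (via time-monotonicity of the game values inherited from Proposition~\ref{prop time-mono}) that $u_j(\cdot,t)=C$ outside $K_0$ for all $t\ge 0$, and then invokes a stability result \cite[Theorem~6.1]{FLM1} under (CP). Your primary route---the $1$-Lipschitz dependence of the game value on the terminal cost---is more explicit and bypasses the need to verify that $u_j$ retains the exterior value $C$ for positive times. Your alternative (CP)-based route, however, does implicitly need this fact (to ensure $u_j=v_j=C$ off a compact set so that (CP) applies), and for that you would want the time-monotonicity observation the paper makes; you flag the exterior-constant issue but do not supply this ingredient.
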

\begin{proof}
For fixed $j \ge 1$, since $\hat{u}_{0, j}$ satisfies the assumptions on $\hat{u}_0$ in Theorem \ref{cor uni-qc2}, that is, $\hat{u}_{0, j}\in C(\H)$ is uniformly h-quasiconvex in $\H$ and 
\begin{equation}\label{eq growth approx}
\hat{u}_{0, j}(p)\leq L_j(|p|_G^4+1), \quad p\in \H 
\end{equation}
for some $L_j>0$. As a result, $u_{0,j} = \min\{\hat{u}_{0,j}, C\}$ satisfies the assumptions on $u_0$ in Theorem \ref{cor uni-qc2} and thus we see that the corresponding solution $u_j$ is h-quasiconvex in space for all $t\geq 0$ and $j\geq 1$.

Also, as a limit of the associated game values, $u_j$ is nondecreasing in time, which, combined with  the condition that $u_{0, j}=C$ outside $K_0$, implies that $u_{j}(\cdot, t)=C$ outside $K_0$ for all $t\geq 0$ and $j\geq 1$. By \eqref{ia3} and the standard stability argument for \eqref{mcf}\eqref{initial} \cite[Theorem 6.1]{FLM1} under (CP), we see that $u_j\to u$ uniformly as $j\to \infty$. As an immediate consequence, we obtain the desired h-quasiconvexity of $u(\cdot, t)$ for all $t>0$. Indeed, for any fixed $p\in \H$ and $v\in \H_0$, the h-quasiconvexity of $u_j(\cdot, t)$ yields
\[
u_j(p, t)\leq \max\{u_j(p\cdot h, t), u_j(p\cdot h^{-1}, t)\}\quad \text{for all $j\geq 1$.}
\]
By the convergence of $u_j$ to $u$, it follows immediately that
\[
u(p, t)\leq \max\{u(p\cdot h, t), u(p\cdot h^{-1}, t)\},
\]
which gives the h-quasiconvexity of $u(\cdot, t)$.
\end{proof}

\subsection{Construction of initial functions}\label{sec:initial}

In our convexity preserving results in the previous section, we impose several assumptions on the initial value $u_0$. 
Our goal is to understand the h-convexity preserving property for the curvature flow with a given initial set $E_0$. Note that the geometric evolution $E_t$ does not depend on the choice of $u_0$ as long as \eqref{initial zero} holds and $u_0=C$ outside a compact set of $\H$; this can be proved by applying \cite[Theorem A.1]{FLM1} together with (CP). 
However, we need to clarify that such uniformly h-quasiconvex $u_0$ as in Theorem \ref{cor uni-qc2} or in Theorem \ref{thm qc} does exist. This is a highly nontrivial question. We below provide an affirmative answer in a special case when the following additional star-shapedness condition on the initial open set $E_0$ holds. The star-shapedness for solutions to elliptic equations in Carnot groups is studied in the recent work \cite{DGS}.

Let us assume that $E_0\subset \H$ is a nonempty open bounded set and satisfies the following conditions: 

\begin{enumerate}
\item[(S1)] $\delta_{\mu} (\overline{E_0})\subset E_0$ for any $0<\mu<1$;
\item[(S2)] There exist $r_0>0$ and $\sigma>0$ such that for any $0<r<r_0$, $p\in \partial E_0$ and $v\in \H_0$ with $|v|=1$, we have
\begin{equation}\label{starshape initial}
\max\{U_0(p\cdot rv), U_0(p\cdot rv^{-1})\}\geq 1+\sigma r^2,
\end{equation}
where $U_0: \H\to [0, \infty)$ denotes a Minkowski-type functional associated to $E_0$ given by 
\begin{equation}\label{minkowski}
U_0(p):=\begin{cases}\sup\left\{\mu^{-2}:\ \mu>0 \text{ such that } \delta_{\mu} (p)\notin E_0\right\} &  \text{if $p\neq 0$,}\\
0 & \text{if $p=0$.}
\end{cases}
\end{equation}
\end{enumerate}
The condition (S1) is a strict star-shapedess condition on $E_0$, while (S2) can be regarded as a reinforced h-convexity with the star-shapedness.

For $E_0\subset \H$ satisfying (S1)(S2), we use the function $U_0$ to build a uniformly h-quasiconvex function $\hat{u}_0\in C(\H)$ satisfying
\begin{equation}\label{initial zero2}
E_0=\{p\in \H: \hat{u}_0(p)< 0\},
\end{equation}
and the growth condition \eqref{eq growth} as well as the coercivity condition:
\begin{equation}\label{coercive}
\min_{p\in B_R(0)}\hat{u}_0(p) \to \infty \quad\text{as $R\to \infty$.}   
\end{equation}
Once this step is completed, one can truncate $\hat{u}_0$ as in \eqref{truncation} with $C\in \R$ large to get $u_0$ that meets our need for the h-quasiconvexity result in Theorem \ref{cor uni-qc2}.

\begin{prop}[Uniformly h-quasiconvex defining function]\label{prop initial}
Let $E_0\subset \H$ be an open bounded set satisfying the conditions (S1)(S2). Then there exists a uniformly h-quasiconvex function $\hat{u}_0\in C(\H)$ such that \eqref{initial zero2}, \eqref{coercive} and \eqref{eq growth} hold. 
\end{prop}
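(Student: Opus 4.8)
The plan is to build $\hat u_0$ from the Minkowski functional $U_0$ of \eqref{minkowski}, but corrected near the origin, where $U_0$ unavoidably degenerates. Concretely, I would set
\[
\hat u_0(p):=\max\bigl\{\,U_0(p)-1,\ A(x^2+y^2+|z|)-B\,\bigr\},\qquad p=(x,y,z)\in\H,
\]
with $B:=\tfrac12$ and $A>0$ chosen so small that $A\,M<B$, where $M:=\sup_{\overline{E_0}}(x^2+y^2+|z|)<\infty$. The second entry plays the role of a ``shield'': by Example~\ref{ex linear} applied to the concave function $g(z)=-|z|$, the function $Q(p):=x^2+y^2+|z|$ is uniformly h-quasiconvex in all of $\H$ with $r_0=\infty$ and $\lambda=1$, hence $\Psi:=AQ-B$ is uniformly h-quasiconvex in $\H$ with arbitrary radius and $\lambda=A$. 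Since $\Psi\le\hat u_0$ and $U_0-1\le\hat u_0$ everywhere, the uniform h-quasiconvexity inequality for $\hat u_0$ will be deduced at each point from whichever of the two entries realizes the maximum there.

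\textbf{Structure of $U_0$.} First I would record the basic properties. Because $\delta_\mu$ is a group automorphism that dilates the Kor\'anyi gauge by $\mu$, \eqref{minkowski} gives at once $U_0(\delta_\mu p)=\mu^2 U_0(p)$; moreover (S1) together with the openness of $E_0$ forces $0\in E_0$, so there are $0<\rho<R$ with $B_\rho(0)\subset E_0\subset B_R(0)$, and applying the homogeneity to these inclusions yields $|p|_G^2/R^2\le U_0(p)\le|p|_G^2/\rho^2$ for all $p$. The sublevel sets satisfy $\{U_0<\mu^2\}=\delta_\mu(E_0)$, which is open, and — this is where the \emph{strict} inclusion in (S1) enters — $\{U_0\le\mu^2\}=\delta_\mu(\overline{E_0})$, which is closed; consequently $U_0\in C(\H)$, $U_0(0)=0$, and $\{U_0<1\}=E_0$, $\{U_0=1\}=\partial E_0$.

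\textbf{A rescaled form of (S2).} The key estimate comes from rescaling (S2) via the $2$-homogeneity. Given $q\ne0$, set $\mu:=U_0(q)^{-1/2}$, so that $p:=\delta_\mu(q)\in\{U_0=1\}=\partial E_0$. For a unit $v\in\H_0$ and $r>0$ one has $q\cdot rv=\delta_{1/\mu}\bigl(p\cdot(\mu r)v\bigr)$ and likewise for $v^{-1}$, whence
\[
\max\{U_0(q\cdot rv),\,U_0(q\cdot rv^{-1})\}=U_0(q)\,\max\{U_0(p\cdot(\mu r)v),\,U_0(p\cdot(\mu r)v^{-1})\}.
\]
Plugging (S2) into the right-hand side, which is legitimate as long as $\mu r<r_0$, gives
\[
\max\{U_0(q\cdot rv),\,U_0(q\cdot rv^{-1})\}\ \ge\ U_0(q)+\sigma r^2\qquad\text{for }0<r<r_0\,U_0(q)^{1/2}.
\]
The admissible window for $r$ shrinks to $0$ as $q\to0$; this is precisely the degeneracy that makes $U_0-1$ alone non-uniform and that the shield $\Psi$ is designed to absorb.

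\textbf{Verification and the main obstacle.} With $\hat u_0=\max\{U_0-1,\Psi\}$: continuity is clear; $\hat u_0\ge U_0-1\ge |p|_G^2/R^2-1$ gives \eqref{coercive}; $U_0\le|p|_G^2/\rho^2$ and $x^2+y^2+|z|\le\tfrac54|p|_G^2$ give \eqref{eq growth}; and $AM<B$ gives $\overline{E_0}\subset\{\Psi<0\}$, so $\{\hat u_0<0\}=\{U_0<1\}\cap\{\Psi<0\}=E_0$, which is \eqref{initial zero2}. For uniform h-quasiconvexity I would take $\bar r_0:=r_0\sqrt{1-B}=r_0/\sqrt2$ and $\bar\lambda:=\min\{\sigma,A\}$ and fix $p\in\H$, $0<r<\bar r_0$, unit $v\in\H_0$. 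If $\hat u_0(p)=\Psi(p)$, then $\hat u_0(p)=\Psi(p)\le\max\{\Psi(p\cdot rv),\Psi(p\cdot rv^{-1})\}-Ar^2\le\max\{\hat u_0(p\cdot rv),\hat u_0(p\cdot rv^{-1})\}-\bar\lambda r^2$, using $\Psi\le\hat u_0$. Otherwise $\hat u_0(p)=U_0(p)-1>\Psi(p)\ge-B$, so $U_0(p)>1-B$ and hence $r<\bar r_0<r_0\,U_0(p)^{1/2}$; the rescaled (S2) then applies at $q=p$ and yields $\max\{\hat u_0(p\cdot rv),\hat u_0(p\cdot rv^{-1})\}\ge\max\{U_0(p\cdot rv),U_0(p\cdot rv^{-1})\}-1\ge U_0(p)-1+\sigma r^2=\hat u_0(p)+\sigma r^2$. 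The main obstacle throughout is exactly this matching step: (S2) gives information only on $\partial E_0$, so after rescaling it controls the level set $\{U_0=c\}$ only at scales $\sim\sqrt c$, and the whole design consists in choosing $B$ so that the set $\{U_0<1-B\}$, where this control is lost, coincides with the region on which the globally uniformly h-quasiconvex shield $\Psi$ takes over. A secondary technical point is the continuity of $U_0$, for which the \emph{strict} star-shapedness in (S1) — not merely $\delta_\mu(E_0)\subset E_0$ — is what is needed to close the sublevel sets $\{U_0\le\mu^2\}$.
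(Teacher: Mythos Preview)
Your proof is correct and follows essentially the same route as the paper: the construction $\hat u_0=\max\{U_0-1,\;A(x^2+y^2+|z|)-\tfrac12\}$ is exactly the paper's $\max\{U_0,\psi_c\}-1$ with $\psi_c(p)=c(x^2+y^2+|z|)+\tfrac12$, and your rescaling of (S2) via the $2$-homogeneity of $U_0$ is the same key step. Your two-case split (according to which entry realises the maximum) is a slightly cleaner packaging than the paper's three-case split by location, and your rescaled bound $\max\{U_0(q\cdot rv),U_0(q\cdot rv^{-1})\}\ge U_0(q)+\sigma r^2$ is in fact the correct outcome of the substitution (the paper writes $+\sigma r^2\,U_0(p)$ at that point, which is a harmless slip since only a positive lower bound away from the origin is used).
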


Before proving this proposition, we discuss several basic properties of $U_0$ for a star-shaped $E_0$. 
\begin{lem}[Properties of Minkowski-type functional]\label{lem minkowski}
Let $E_0\subset \H$ be a nonempty open bounded set satisfying (S1) and $U_0$ be given by \eqref{minkowski}. Then the following properties hold. 
\begin{enumerate}
\item[(i)] We have $0 \in E_0$, and $p=0$ if and only if $U_0(p)=0$.
\item[(ii)] For any $p\in \H\setminus \{0\}$ and $\mu>0$, $\delta_\mu(p)\in \ol{E_0}$ holds if and only if $U_0(p)\leq \mu^{-2}$.
\item[(iii)] For any $p\in \H\setminus \{0\}$ and $\mu>0$, $\delta_\mu(p)\notin E_0$ holds if and only if $U_0(p)\geq \mu^{-2}$. 
\item[(iv)] For any $p\in \H\setminus \{0\}$, $\delta_\mu(p)\in \partial E_0$ if and only if $U_0(p)=\mu^{-2}$.  
\item[(v)] For any $p\in \H$, $U_0(\delta_s(p))=s^2 U_0(p)$ for all $s\geq 0$. 
\item[(vi)] The function $U_0$ is continuous in $\H$. 
\item[(vii)] For $r, R>0$ satisfying $B_r(0)\subset E_0\subset B_R(0)$, there holds 
\begin{equation}\label{minkowski bound}
{|p|_G^2\over R^2}\leq U_0(p)\leq {|p|_G^2\over r^2} \quad \text{for all $p\in \H$.}
\end{equation}
Here $B_r(0), B_R(0)$ are gauge balls at the center $0$.
\end{enumerate}
\end{lem}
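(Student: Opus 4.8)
The plan is to reduce everything to one structural fact about dilation rays. Fix $p\in\H\setminus\{0\}$ and set
\[
A_p:=\{\mu>0:\ \delta_\mu(p)\notin E_0\},\qquad T(p):=\inf A_p .
\]
First I would record (i), since it is needed throughout: picking $q\in E_0\subset\overline{E_0}$, condition (S1) gives $\delta_\mu(q)\in E_0$ for all $\mu\in(0,1)$, so $0=\lim_{\mu\to0^+}\delta_\mu(q)\in\overline{E_0}$, and then $0=\delta_{1/2}(0)\in\delta_{1/2}(\overline{E_0})\subset E_0$; moreover $U_0(p)=0$ with $p\neq0$ is impossible because boundedness of $E_0$ forces $\delta_\mu(p)\notin E_0$ for $\mu$ large (using the elementary identity $|\delta_\mu(p)|_G=\mu|p|_G$), so $A_p\neq\emptyset$ and $U_0(p)\geq\mu^{-2}>0$. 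Next I would prove that \emph{$A_p=[T(p),\infty)$ and $\delta_{T(p)}(p)\in\partial E_0$}. The set $A_p$ is upward closed: if $\mu\in A_p$, $\lambda>\mu$ and $\lambda\notin A_p$, then $\delta_\lambda(p)\in E_0\subset\overline{E_0}$, hence $\delta_\mu(p)=\delta_{\mu/\lambda}(\delta_\lambda(p))\in E_0$ by (S1) since $\mu/\lambda\in(0,1)$, a contradiction. Since $0\in E_0$ is open, $\delta_\mu(p)\in E_0$ for all small $\mu$, so $T(p)>0$; openness of $E_0$ together with upward closedness then forces $\delta_{T(p)}(p)\notin E_0$, so $A_p=[T(p),\infty)$, while $\delta_{T(p)}(p)=\lim_{\mu\uparrow T(p)}\delta_\mu(p)\in\overline{E_0}$, giving $\delta_{T(p)}(p)\in\partial E_0$; finally for $\mu>T(p)$ one has $\delta_\mu(p)\notin\overline{E_0}$, because otherwise choosing $\nu\in(T(p)/\mu,1)$ would give $\delta_{\nu\mu}(p)\in E_0$ with $\nu\mu>T(p)$. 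Consequently $U_0(p)=\sup\{\mu^{-2}:\mu\in A_p\}=T(p)^{-2}$.

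With this in hand, (ii)--(iv) are immediate translations: $\delta_\mu(p)\notin E_0\iff\mu\geq T(p)\iff\mu^{-2}\leq U_0(p)$ gives (iii); $\delta_\mu(p)\in\overline{E_0}\iff\mu\leq T(p)\iff U_0(p)\leq\mu^{-2}$ gives (ii); and $\delta_\mu(p)\in\partial E_0\iff\mu=T(p)\iff U_0(p)=\mu^{-2}$ gives (iv). Item (v) follows for $p\neq0$, $s>0$ from the substitution $\nu=s\mu$ in the supremum defining $U_0(\delta_s(p))$, and trivially when $s=0$ or $p=0$. Item (vii) is a comparison of rays with gauge balls: $B_r(0)\subset E_0$ yields $\delta_\mu(p)\in E_0$ for $\mu<r/|p|_G$, hence $T(p)\geq r/|p|_G$; and $E_0\subset B_R(0)$ yields $\delta_\mu(p)\notin E_0$ for $\mu\geq R/|p|_G$, hence $T(p)\leq R/|p|_G$; squaring and inverting gives $|p|_G^2/R^2\leq U_0(p)\leq|p|_G^2/r^2$, which also covers $p=0$.

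For continuity (vi) I would argue upper and lower semicontinuity separately along a convergent sequence $p_k\to p$. At $p=0$, (vii) gives $U_0(p_k)\to0=U_0(0)$. For $p\neq0$, pass to a subsequence along which $U_0(p_{k_j})$ tends to its upper (resp. lower) limit $L$; by (vii), $L\in(0,\infty)$, so $\mu_j:=U_0(p_{k_j})^{-1/2}$ is well defined with $\mu_j\to L^{-1/2}$. By (iii) (resp. (ii)), $\delta_{\mu_j}(p_{k_j})$ lies in the closed set $\H\setminus E_0$ (resp. $\overline{E_0}$), and since dilations and group multiplication are continuous, $\delta_{\mu_j}(p_{k_j})\to\delta_{L^{-1/2}}(p)$ lies in the same closed set; applying (iii) (resp. (ii)) once more yields $U_0(p)\geq L$ (resp. $U_0(p)\leq L$), i.e.\ the desired semicontinuity. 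The step I expect to require the most care is the structural claim $A_p=[T(p),\infty)$ with $\delta_{T(p)}(p)\in\partial E_0$: this is precisely where the \emph{strict} star-shapedness (S1) is essential, since it is what prevents a ray from re-entering $E_0$ after leaving it and pins the boundary crossing to a single scale; the remaining items are bookkeeping with $|\delta_\mu(p)|_G=\mu|p|_G$ and closedness of $\H\setminus E_0$ and $\overline{E_0}$.
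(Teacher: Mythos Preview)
Your proof is correct and rests on the same ideas as the paper's, though the organization differs in a way worth noting. You first establish the structural fact that $A_p=[T(p),\infty)$ with $\delta_{T(p)}(p)\in\partial E_0$ (equivalently, each dilation ray crosses $\partial E_0$ at exactly one scale), and then read off (ii)--(iv) as immediate corollaries of $U_0(p)=T(p)^{-2}$; the paper instead proves (ii), (iii), (iv) one at a time, each with its own invocation of (S1). Your $T(p)$ framing consolidates the essential use of strict star-shapedness into a single clean lemma, which is a mild improvement in clarity. For (vi), the paper gives an $\varepsilon$-neighborhood argument (perturbing $\mu_p=U_0(p)^{-1/2}$ by $\pm\varepsilon$ and using openness of $E_0$ and of its complement in $\overline{E_0}^c$), while you argue upper and lower semicontinuity via subsequences, using (vii) to guarantee the limits lie in $(0,\infty)$ and closedness of $\H\setminus E_0$ and $\overline{E_0}$ to pass to the limit; both are standard and equivalent. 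Items (i), (v), (vii) are handled identically in substance.
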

\begin{proof}
(i) {Pick a point $p \in E_0$. It follows from (S1) that $\delta_\mu(p) \in E_0$ for any $0 < \mu < 1$ and as a result $0 \in \ol{E_0}$. Then from (S1) again that $0 = \delta_{\frac12}(0) \in E_0$.} By definition of $U_0$, $p=0$ yields $U_0(p)=0$. If $p\neq 0$, then due to the boundedness of $E_0$, there exists $\vep>0$ small such that $\delta_{\vep^{-1}}(p)\notin E_0$, which implies that $U_0(p)\geq \vep^2>0$.

(ii) To prove ``$\Rightarrow$'' for $p\in \H\setminus \{0\}$, we use (S1) to get $\delta_s(p)\in E_0$ for all $0\leq s<\mu$, which immediately implies $U_0(p)\leq \mu^{-2}$. For the proof of ``$\Leftarrow$'', note that for any $0<s<\mu$ so that $s^{-2}>\mu^{-2}$, we can apply the definition of $U_0$ to deduce that $\delta_s(p)\in E_0$. This shows that $\delta_\mu(p)\in \ol{E_0}$.

(iii) ``$\Rightarrow$'' follows directly from the definition of $U_0$. The reverse implication can be obtained from (ii). In fact, assuming by contradiction that $\delta_\mu(p)\in E_0$ holds, we have $\delta_{\mu+\vep}(p)\in E_0$ for $\vep>0$ small, since $E_0$ is an open set. It follows from (ii) that $U_0(p)\leq (\mu+\vep)^{-2}$, which contradicts the condition that $U_0(p)\geq \mu^{-2}$.

(iv) This is an immediate consequence of (ii) and (iii).

(v) The case when $p=0$ or $s=0$ is trivial. Let us consider the case $p\neq 0$ and $s>0$. Since $\delta_\mu(p)=\delta_{\mu/s} (\delta_s(p))$, it is clear that $\delta_\mu(p)\notin E_0$ if and only if $\delta_{\mu/s} (\delta_s(p))\notin E_0$. Then, by the definition of $U_0$, we have
\[
U_0(p)=\sup\{\mu^{-2}: \delta_\mu(p)\notin E_0\}=s^{-2} \sup\left\{\left({\mu/s}\right)^{-2}:  \delta_{\mu/s} (\delta_s(p))\notin E_0 \right\} =s^{-2} U_0(\delta_s(p)).
\]
This homogeneity result actually does not require (S1). 

(vi) Fixing $p\in \H\setminus \{0\}$ and setting $\mu_p:=U_0(p)^{-1/2}>0$, for any fixed small $\vep>0$, it follows from (ii), (iii) and (iv) that  $\delta_{\mu_p-\vep}(p)\in E_0$ and $\delta_{\mu_p+\vep}(p)\notin \ol{E_0}$. As a consequence, 
we obtain $\delta_{\mu_p-\vep}(q)\in E_0$ and $\delta_{\mu_p+\vep}(q)\notin \ol{E_0}$ when $q\in \H$ is sufficiently close to $p$. It then follows from (ii) and (iii) again that 
\[
(\mu_p+\vep)^{-2} < U_0(q) < (\mu_p-\vep)^{-2}.
\]
We thus get $U_0(q)\to \mu_p^{-2}= U_0(p)$ as $q\to p$. In the case $p=0$ and $U_0(p)=0$, for any fixed $\vep>0$, we have $\delta_{\vep^{-1}}(q)\in E_0$ holds if $q\in \H\setminus\{0\}$ is taken sufficiently close to $0$. This yields $U_0(q)\leq \vep^2$, which, due to the arbitrariness of $\vep$, further implies $U_0(q)\to U_0(0)=0$ as $q\to 0$.

(vii) It is clear that \eqref{minkowski bound} holds at $p=0$. Let us fix $p\neq 0$. Since there exists $r>0$ such that $B_r(0)\subset E_0$, by definition of $U_0$ as in \eqref{minkowski} we have 
\[
U_0(p)\leq \sup\left\{\mu^{-2}:\ \mu>0 \text{ such that } \delta_{\mu} (p)\notin B_r(0)\right\}.
\]
Noticing that $\delta_s(p)\in B_r(0)$ if and only if 
$0\leq s<r/|p|_G$, we are led to $U_0(p)\leq |p|_G^2/r^2$. Using the condition $E_0\subset B_R(0)$, we can similarly show that 
\[
U_0(p)\geq \sup\left\{\mu^{-2}:\ \mu>0 \text{ such that } \delta_{\mu} (p)\notin B_R(0)\right\}\geq {|p|_G^2\over R^2},
\]
which completes the proof of (vii).
\end{proof}

We now turn to the proof of Proposition \ref{prop initial}.
\begin{proof}[Proof of Proposition \ref{prop initial}]

We first show that $U_0$ is uniformly h-quasiconvex in $\H\setminus B_\rho(0)$ for any $\rho>0$. Fix arbitrarily $p\in \H\setminus \{0\}$ and let $\mu_p=U_0(p)^{-1/2}$ again. It is clear that $\hat{p}=\delta_{\mu_p}(p)$ satisfies $U_0(\hat{p})=1$ and thus  $\hat{p}\in \partial E_0$ by Lemma \ref{lem minkowski}(iv)(v). In view of (S2), we have 
\[
\max\{U_0(\hat{p}\cdot rv), U_0(\hat{p}\cdot rv^{-1})\}\geq U_0(\hat{p})+\sigma r^2
\]
for all $r\in (0, r_0)$ and $v\in \H_0$ with $|v|=1$.
Since $U_0$ is homogeneous of degree $2$ with respect to the group dilation, as shown in Lemma \ref{lem minkowski}(v), we obtain 
\[
\max\left\{U_0(\delta_{\mu_p^{-1}}(\hat{p}\cdot rv)),\ U_0(\delta_{\mu_p^{-1}}(\hat{p}\cdot rv^{-1}))\right\}\geq U_0(p)+\sigma r^2 U_0(p).
\]
This amounts to saying that
\[
\max\{U_0(p\cdot rv), U_0(p\cdot rv^{-1})\}\geq U_0(p)+\sigma r^2 U_0(p)
\]
for all $p\in \H\setminus \{0\}$, $v\in \H_0$ with $|v|=1$ and $0<r< r_0 U_0(p)^{1\over 2}$. In particular, $U_0$ is uniformly h-quasiconvex in $\H\setminus B_\rho(0)$ for every $\rho>0$. We fix $\rho>0$ small such that $U_0<1/2$ in $B_\rho(0)$.

We finally construct $\hat{u}_0$ based on $U_0$. For $c>0$, {take $\psi_c(p)=c(x^2+y^2+|z|)+1/2$ for $p=(x, y, z)$. It follows from Example \ref{ex linear} that $\psi_c$ is}  
uniformly h-quasiconvex in $\H$. It is also easily seen that $\psi_c> U_0$ in $B_\rho(0)$. By choosing $c>0$ small, we have $\psi_c<U_0$ on $\partial E_0$ {(and thus on $E_0^c$ by homogeneity)}. Letting 
\[
{\hat{u}_0(p) := \max\{U_0(p), \psi_c(p)\}-1,} 
\]
we can verify that $\hat{u}_0$ is uniformly h-quasiconvex in $\H$ and satisfies  \eqref{initial zero2}. The verification of \eqref{initial zero2} is quite straightforward. Concerning the uniform h-quasiconvexity in $\H$, we only need to take arbitrarily $p\in \H$, $v\in \H_0$ with $|v|=1$ and $r\in (0, r_0)$ with $r_0>0$ sufficiently small and discuss three different cases in terms of the location of $p$: $p\in B_\rho(0)$, $p\in B_\rho(0)^c$ with $\psi_c < U_0$, and $p\in B_\rho(0)^c$ with $\psi_c \ge U_0$. 
We omit the details here. 

The properties \eqref{coercive} and \eqref{eq growth} can also be easily proved by using Lemma \ref{lem minkowski}(vii). 
\end{proof}

Let us discuss the conditions (S1)(S2) more specifically under the rotational symmetry about the $z$-axis. In this special case, by expressing the boundary of $E_0$ by $x^2+y^2=g(z)$ as in Example \ref{ex rot-symmetry} and Example \ref{ex linear}, we can obtain more explicit sufficient conditions on $g$ that implies (S1)(S2). 

\begin{prop}[Uniformly h-quasiconvex initial value with rotational symmetry]\label{prop starshape sym}
Let $a, b\in \R$ with {$a < 0 < b$} 
and $g\in C^2([a, b])$ 
such that $g>0$ in $(a, b)$ and $g(a)=g(b)= 0$.  Let $E_0\subset \H$ be an open bounded set symmetric about the $z$-axis such that
\begin{equation}\label{rot graph eq}
E_0=\{(x, y, z)\in \H: x^2+y^2<g(z),\ z\in (a, b)\}.
\end{equation}
\begin{enumerate}
\item[(1)] If $g$ satisfies 
\begin{equation}\label{graph starshape0}
g(z)<{1\over \mu} g(\mu z)\quad \text{for all $0<\mu<1$ and $z\in [a, b]$,}
\end{equation}
then $E_0$ satisfies (S1). {In particular, if $g$ is monotonically increasing on $[a,0]$ and monotonically decreasing on $[0,b]$, then \eqref{graph starshape0} holds and $E_0$ satisfies (S1).}
\item[(2)] 
If $g$ satisfies 
\begin{equation}\label{unif graph cond}
1-{2g(z) g''(z)\over 16+g'(z)^2}> \sigma\quad\text{for all $z\in [a, b]$, with some $\sigma \in (0,1)$}
\end{equation}
then $E_0$ satisfies the conditions (S2). In particular, if $g''\leq 0$,  then \eqref{unif graph cond} holds with every $\sigma \in (0,1)$ and $E_0$ satisfies (S2).
 \end{enumerate}
\end{prop}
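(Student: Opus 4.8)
The two assertions are essentially independent, and both reduce to the explicit description $\overline{E_0}=\{(x,y,z):x^2+y^2\le g(z),\ z\in[a,b]\}$ together with the homogeneity and monotonicity properties of $U_0$ recorded in Lemma~\ref{lem minkowski}.

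For part~(1) the plan is direct. Since $\delta_\mu(x,y,z)=(\mu x,\mu y,\mu^2 z)$ and $a<0<b$, a quick check gives $\mu^2 z\in(a,b)$ for every $z\in[a,b]$ and $0<\mu<1$; hence $\delta_\mu(\overline{E_0})\subset E_0$ reduces to the inequality $\mu^2(x^2+y^2)<g(\mu^2 z)$ for all $(x,y,z)\in\overline{E_0}$, and since $\mu^2(x^2+y^2)\le\mu^2 g(z)$ this is exactly \eqref{graph starshape0} with the parameter $\mu^2\in(0,1)$ in place of $\mu$. For the monotone case I would verify \eqref{graph starshape0} by the elementary split $z=0$, $z\in(0,b]$, $z\in[a,0)$: for $z\in(0,b]$ one has $\mu z<z$, so monotone decrease gives $g(\mu z)\ge g(z)$ and therefore $\tfrac1\mu g(\mu z)\ge\tfrac1\mu g(z)>g(z)$ (the strictness coming from $g(z)>0$ when $z<b$, and from $g(\mu b)>0$ when $z=b$); the interval $[a,0)$ is symmetric via monotone increase, and $z=0$ is immediate since $g(0)>0$.

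For part~(2) the starting point is the sublevel description of $U_0$: by Lemma~\ref{lem minkowski}(iii), $U_0(q)\ge c$ holds iff $\delta_{c^{-1/2}}(q)\notin E_0$, i.e.\ iff either $q_3/c\notin(a,b)$, or $q_3/c\in(a,b)$ and $q_1^2+q_2^2\ge c\,g(q_3/c)$. Fix $p=(x_0,y_0,z_0)\in\partial E_0$ (so $x_0^2+y_0^2=g(z_0)$), a unit $v=(v_1,v_2,0)\in\H_0$, and $0<r<r_0$; put $c=1+\sigma' r^2$ with a small $\sigma'>0$ to be chosen. Writing $p\cdot r(\pm v)=(x_0\pm rv_1,\ y_0\pm rv_2,\ z_0\pm r\omega_v)$ with $\omega_v=\tfrac12(x_0v_2-y_0v_1)$, the bound $U_0(p\cdot r(\pm v))\ge c$ is automatic whenever $(z_0\pm r\omega_v)/c\notin(a,b)$, and otherwise a Taylor expansion of $c^{-1}$ and of $g\in C^2([a,b])$ around $z_0$ (uniform in $z_0$) yields
\[
\big((p\cdot r(\pm v))_1^2+(p\cdot r(\pm v))_2^2\big)-c\,g\!\left(\tfrac{(p\cdot r(\pm v))_3}{c}\right)=\pm\langle\nabla_H f(p),v\rangle\,r+A_{\sigma'}(v)\,r^2+o(r^2),
\]
where $f(x,y,z)=x^2+y^2-g(z)$ is the function from Example~\ref{ex rot-symmetry} and $A_{\sigma'}(v)=1-\sigma'\big(g(z_0)-z_0g'(z_0)\big)-\tfrac12 g''(z_0)\omega_v^2$. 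Passing to the larger sign turns $\pm\langle\nabla_H f(p),v\rangle r$ into $|\langle\nabla_H f(p),v\rangle|\,r\ge0$, so it remains to make $A_{\sigma'}(v)\,r^2+|\langle\nabla_H f(p),v\rangle|\,r$ strictly positive for all unit $v$, uniformly in $z_0\in[a,b]$ and $0<r<r_0$ (up to the uniform $o(r^2)$).

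The crucial point is that for $v\perp\nabla_H f(p)$ one computes $\omega_v^2=\tfrac{4g(z_0)}{16+g'(z_0)^2}$, so that $1-\tfrac12 g''(z_0)\omega_v^2$ equals precisely the left-hand side of \eqref{unif graph cond}; thus \eqref{unif graph cond} supplies a uniform positive lower bound for the ``even'' part of $A_{\sigma'}(v)$ exactly along the directions in which the linear term disappears. To globalise this I would split $v=v_\parallel+v_\perp$ relative to $\nabla_H f(p)$, as in the proof of Proposition~\ref{prop c2}: on the portion of $\partial E_0$ where $g(z_0)\ge m'$ (for a small threshold $m'$) one has $|\nabla_H f(p)|=\tfrac12\sqrt{g(z_0)}\sqrt{16+g'(z_0)^2}$ bounded below, and balancing $|\langle\nabla_H f(p),v\rangle|\,r\gtrsim|v_\parallel|r$ against the $O(|v_\parallel|)$ error incurred in $A_{\sigma'}(v)$ by the $v_\parallel$-component gives positivity once $r_0,\sigma'$ are small; on the complementary region $g(z_0)<m'$, a neighbourhood of the ``poles'' $(0,0,a)$ and $(0,0,b)$, one instead uses $\omega_v^2\le\tfrac14 g(z_0)<\tfrac14 m'$ to get $A_{\sigma'}(v)\ge\tfrac12$ outright, for $m',\sigma'$ small. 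Since (S2) only demands the existence of some $\sigma>0$, taking $\sigma'$ to be the minimum of the finitely many thresholds produced above finishes the argument; the ``in particular'' clause is immediate because $g''\le0$ and $g\ge0$ force the left-hand side of \eqref{unif graph cond} to be at least $1$. The main obstacle is precisely this last region: near the poles both $\nabla_H f(p)$ and the twist $\omega_v$ degenerate, so the naive ``linear term dominates'' heuristic fails and one must instead exploit that the $g''\omega_v^2$ contribution becomes correspondingly small there — which is also why the $\sigma$ furnished by (S2) is in general strictly smaller than the one in \eqref{unif graph cond}.
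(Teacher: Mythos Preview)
Your argument is correct. Part~(1) is identical to the paper's (you also spell out the monotone ``in particular'' case, which the paper leaves implicit).

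For part~(2) you take a genuinely different, though closely related, route. The paper proceeds in three modular steps: it first checks $L[f]\ge 2\sigma$ on $\overline{E_0}$ using the computation of Example~\ref{ex rot-symmetry}, then invokes Proposition~\ref{prop c2} as a black box to obtain $\max\{f(p\cdot rv),f(p\cdot rv^{-1})\}\ge \sigma r^2$ for $p\in\partial E_0$, and finally converts this $f$--inequality into the desired $U_0$--inequality via the elementary bound $(1+s)g(z/(1+s))-g(z)\le 2Ms$. You instead Taylor--expand the single quantity $q_1^2+q_2^2-c\,g(q_3/c)$ with $c=1+\sigma'r^2$ directly, which collapses the last two steps into one and makes the appearance of the extra term $-\sigma'(g(z_0)-z_0g'(z_0))$ in $A_{\sigma'}(v)$ transparent. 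Your treatment of the degeneracy at the poles (splitting $g(z_0)\ge m'$ versus $g(z_0)<m'$ and using $\omega_v^2\le\tfrac14 g(z_0)$ in the latter) replaces the $\Omega_0$--mechanism inside Proposition~\ref{prop c2}; both devices serve the same purpose of handling the set where $\nabla_H f$ vanishes. The paper's version is cleaner in that it reuses Proposition~\ref{prop c2} verbatim and needs only a $C^2$--extension of $g$ to absorb boundary effects, whereas yours is more self--contained and sidesteps the extension by treating the case $(z_0\pm r\omega_v)/c\notin(a,b)$ separately via Lemma~\ref{lem minkowski}(iii). Either way the constant produced for (S2) is strictly smaller than the $\sigma$ in \eqref{unif graph cond}, exactly as you remark.
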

\begin{proof}
(1) Let $f(p)=x^2+y^2-g(z)$ for $p\in \H$.  For any $0<\mu<1$ and $q\in \delta_\mu(\ol{E_0})$, we can write $q=\delta_\mu (p)$ for some $p=(x, y, z)$ such that $x^2+y^2\leq g(z)$ with $z\in [a, b]$. By \eqref{graph starshape0}, we thus have 
\[
f(q)=\mu^2 \left(x^2+y^2-{1\over \mu^2} g(\mu^2 z)\right)< \mu^2\left(x^2+y^2-g(z)\right) =\mu^2 f(p)\leq 0,
\]
which yields $q\in E_0$.

(2) We extend $g$ to a function in $C^2([a-\vep, b-\vep])$ for some small $\vep>0$. 
{ By a direct computation as in Example~\ref{ex rot-symmetry}, we have $L[f]>2\sigma$ in
\[
\ol{E_0} = \{(x,y,z) \in \H : x^2 + y^2 \le g(z), z \in [a,b]\}.
\]
In fact, by \eqref{unif graph cond}, for any $p=(x, y, z)\in \ol{E_0}$ satisfying $g''(z) \geq 0$, we get
\[
L[f](p)= 2-{4(x^2 + y^2) g''(z)\over 16+g'(z)^2} \geq 2-{4g(z) g''(z)\over 16+g'(z)^2} > 2\sigma.
\]
If on the other hand $g''(z) < 0$, it is easily seen that $L[f](p) \geq 2 > 2 \sigma$. As a result, by continuity we have $L[f] \ge 2 \sigma' > 2 \sigma$ in a neighbourhood of $\ol{E_0}$. 

Since $f(p) = 0$ on $\partial E_0$, it follows from Proposition~\ref{prop c2} that there exists $r_0 > 0$ such that
\[
\max\{f(p\cdot rv), f(p\cdot rv^{-1})\}\geq \sigma r^2
\]
for all $0 < r < r_0$, $p \in \partial E_0$ and $v \in \H_0$  with $|v| = 1$.
}
Suppose that $f(p\cdot rv)\geq \sigma r^2$. Then writing $v=(\eta_1, \eta_2, 0)$, we get
\begin{equation}\label{starshape sym eq1}
(x+r\eta_1)^2+(y+r\eta_2)^2\geq g\left(z+{1\over 2}r x \eta_2-{1\over 2}r y \eta_1\right)+\sigma r^2.
\end{equation}
Noticing that 
\[
M := \sup_{z\in [a-\vep, b+\vep]} |g(z)|+|z g'(z)| < +\infty,
\]
we have 
\begin{equation}\label{graph starshape}
\begin{aligned}
\left(1+s \right)  g\left({z\over 1+ s}\right)
-g(z)\leq \int_0^s g\left({z\over 1+\tau}\right) -{z\over 1+\tau}g'\left({z\over 1+\tau}\right)\, d\tau\leq 2M s
 \end{aligned}
\end{equation}
for all {$s > 0$ and $z\in [a-\vep, b+\vep]$.} 
Applying \eqref{graph starshape} to \eqref{starshape sym eq1} with $s = \sigma_M r^2 := \frac{\sigma r^2}{2M}$, we obtain 
\[
(x+r\eta_1)^2+(y+r\eta_2)^2\geq (1+\sigma_M^2 r^2) g\left(\frac{z+{1\over 2}r x \eta_2-{1\over 2}r y \eta_1}{1+\sigma_M r^2}\right).
\]
This gives $f(\delta_{\mu_\sigma}(p\cdot rv))\geq 0$ with $\mu_\sigma:=(1+\sigma_M r^2)^{-1/2}$. In other words, $\delta_{\mu_\sigma}(p\cdot rv)\notin E_0$ and thus $U_0(p\cdot rv)\geq 1+\sigma_M r^2$ by Lemma \ref{lem minkowski}(iii). In the case that $f(p\cdot rv^{-1})\geq \sigma r^2$, we can similarly obtain $U_0(p\cdot rv^{-1})\geq 1+\sigma_M r^2$. Hence, we have \eqref{starshape initial} with $\sigma=\sigma_M$ for all $p\in \partial E_0$, $v\in \H_0$ and $r\in (0, r_0)$ when $r_0>0$ is sufficiently small, which verifies (S2).
\end{proof}

Based on the result above, we can find a class of h-quasiconvex initial sets for which the associated solutions of the horizontal curvature flow stay h-quasiconvex for all times.

\begin{prop}[General h-quasiconvex initial value with rotational symmetry]\label{prop initial approx}
Suppose that $E_0\subset \H$ is a bounded open h-convex set. Assume that $E_0$ is rotationally symmetric with respect to the $z$-axis and is expressed as in \eqref{rot graph eq} with $g\in C([a, b])$ and $a < 0 < b$ satisfying $g>0$ in $(a, b)$ and $g(a)=g(b)=0$. Assume that $g$ satisfies \eqref{graph starshape0}. Assume also that $g$ can be uniformly approximated in $[a, b]$ by a sequence of functions $g_j\in C^2([a_j, b_j])$ with $a_j\leq a, b_j\geq b$, {$a_j \to a, b_j \to b$ as $j \to \infty$} satisfying the conditions in Proposition \ref{prop starshape sym}; more precisely, $g_j>0$ in $(a_j, b_j)$, $g_j(a_j)=g_j(b_j)=0$ and 
\begin{equation}\label{initial approx cond1}
\mu g_j(z)< g_j(\mu z)\quad \text{for all $0<\mu<1$ and for all $z\in [a_j, b_j]$,}
\end{equation}
and there exists $\sigma_j>0$ satisfying
\begin{equation}\label{initial approx cond2}
1-{2g_j(z) g_j''(z)\over 16+g_j'(z)^2}> \sigma_j\quad\text{ for all $z\in [a_j, b_j]$.}
\end{equation}
Set
\[
E_{0,j}=\{(x, y, z)\in \H: x^2+y^2<g_j(z),\ z\in (a_j, b_j)\}.
\]
Let $\hat{u}_0$ and $\hat{u}_{0, j}\in C(\H)$ be the functions constructed as in Proposition \ref{prop initial} for $E_0$ and $E_{0,j}$ respectively. Then \eqref{initial zero2} and the conditions in Theorem \ref{thm qc} hold. 
\end{prop}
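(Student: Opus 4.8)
The plan is to verify, one by one, all the hypotheses of Theorem~\ref{thm qc}, with $u_0:=\min\{\hat u_0,C\}$ and $u_{0,j}:=\min\{\hat u_{0,j},C\}$ for $C>0$ large, where $\hat u_0$ and $\hat u_{0,j}$ are the functions produced by the construction in Proposition~\ref{prop initial} for $E_0$ and $E_{0,j}$. The data $E_{0,j}$ are disposed of immediately: condition \eqref{initial approx cond1} is exactly hypothesis \eqref{graph starshape0} for $g_j$, so $E_{0,j}$ satisfies (S1) by Proposition~\ref{prop starshape sym}(1), while \eqref{initial approx cond2} is hypothesis \eqref{unif graph cond} for $g_j$, so $E_{0,j}$ satisfies (S2) by Proposition~\ref{prop starshape sym}(2). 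Hence Proposition~\ref{prop initial} applies verbatim and each $\hat u_{0,j}\in C(\H)$ is uniformly h-quasiconvex and satisfies \eqref{initial zero2} (for $E_{0,j}$), the coercivity \eqref{coercive}, and the growth bound \eqref{eq growth}. The only real work is therefore the limit object $\hat u_0$ and the uniform convergence $u_{0,j}\to u_0$.

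For $E_0$ we are given h-convexity and boundedness and, from \eqref{graph starshape0} and Proposition~\ref{prop starshape sym}(1), the star-shapedness (S1); we are \emph{not} given (S2), so Proposition~\ref{prop initial} does not apply directly. Its construction, however, still goes through: Lemma~\ref{lem minkowski} uses only (S1) and boundedness, so $U_0$ is continuous, and setting $\hat u_0:=\max\{U_0,\psi_c\}-1$ as in the proof of Proposition~\ref{prop initial} yields \eqref{initial zero2}, \eqref{coercive} and \eqref{eq growth} for $\hat u_0$. What replaces the missing \emph{uniform} h-quasiconvexity is that $\hat u_0$ is still \emph{plainly} h-quasiconvex, which is all Theorem~\ref{thm qc} needs: by Lemma~\ref{lem minkowski}(ii)--(iv) each sublevel set $\{U_0<s\}$ is the dilate $\delta_{\sqrt s}(E_0)$, which is h-convex because $\delta_\lambda$ is a group automorphism carrying horizontal segments to horizontal segments and $E_0$ is h-convex; $\psi_c$ is h-quasiconvex by Example~\ref{ex linear}; hence $U_0$, $\max\{U_0,\psi_c\}$ and finally $u_0=\min\{\hat u_0,C\}$ (capping with a constant only replaces some sublevel sets by $\H$) are all h-quasiconvex. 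By \eqref{coercive} the set $\{\hat u_0<C\}$ is bounded, so $u_0=C$ off a compact set.

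Next I would pin down a single compact $K_0$ and common auxiliary data. Since $g_j\to g$ uniformly on $[a,b]$, $g$ is bounded, $g_j(a_j)=g_j(b_j)=0$ with $a_j\to a$, $b_j\to b$, and $g_j(0)\to g(0)>0$ while $g$ is bounded below on compact subsets of $(a,b)$, there are $0<r<R$ independent of $j$ (for $j$ large) with $B_r(0)\subset E_{0,j}\subset B_R(0)$; Lemma~\ref{lem minkowski}(vii) then gives $|p|_G^2/R^2\le U_{0,j}(p)\le|p|_G^2/r^2$ uniformly in $j$. Consequently one may use the same radius $\rho$ and the same constant $c$ in $\psi_c$ in the construction of every $\hat u_{0,j}$ (enlarging $R$, also for $U_0$), and $\{\hat u_{0,j}<C\}\subset\{\,|p|_G^2<R^2(C+1)\,\}=:K_0$, so $u_{0,j}=C$ off $K_0$ for all large $j$; the finitely many remaining indices are irrelevant in the limit. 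With the same $\psi_c$ fixed, $|\hat u_{0,j}-\hat u_0|\le|U_{0,j}-U_0|$, so the uniform convergence $u_{0,j}\to u_0$ on $\H$ reduces, because all these functions equal $C$ outside $K_0$, to $U_{0,j}\to U_0$ locally uniformly.

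This last convergence is the step I expect to be the main obstacle. The plan is to squeeze $E_{0,j}$ between dilates of $\overline{E_0}$: to produce $\vep_j\downarrow0$ with
\[
\delta_{1-\vep_j}(\overline{E_0})\subset E_{0,j}\subset\delta_{1+\vep_j}(\overline{E_0}).
\]
Given this, the identity $U_{\delta_\lambda E}=\lambda^{-2}U_E$ (immediate from $\{U_E<s\}=\delta_{\sqrt s}(E)$) and monotonicity of Minkowski functionals under set inclusion give $(1+\vep_j)^{-2}U_0\le U_{0,j}\le(1-\vep_j)^{-2}U_0$, which together with the uniform bound $U_0\le|p|_G^2/r^2$ yields local uniform convergence. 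The squeeze itself is where the hypotheses are genuinely used. Describing $E_{0,j}$ and $\delta_{1\pm\vep}(\overline{E_0})$ through their rotational profiles — $g_j$ on $[a_j,b_j]$ versus $(1\pm\vep)^2g(\cdot/(1\pm\vep)^2)$ on $[(1\pm\vep)^2a,(1\pm\vep)^2b]$ — the strict sub-homogeneity \eqref{graph starshape0} of $g$ places the profile of $\delta_{1+\vep}(\overline{E_0})$ (resp.\ $\delta_{1-\vep}(\overline{E_0})$) strictly above (resp.\ below) that of $E_0$, with a gap bounded below on $[a,b]$ by some $c_\vep>0$; hence once $\|g_j-g\|_{L^\infty([a,b])}<c_\vep$ the required graph inequalities hold on $[a,b]$, and the domain inclusions $[a_j,b_j]\subset[(1+\vep)^2a,(1+\vep)^2b]$ and $[(1-\vep)^2a,(1-\vep)^2b]\subset(a_j,b_j)$ hold for $j$ large since $a_j\to a$, $b_j\to b$. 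The delicate point is the collars $[a_j,a]$ and $[b,b_j]$, on which $g$ is not available; there one invokes the sub-homogeneity \eqref{initial approx cond1} of $g_j$ itself, which forces $g_j(z)\le(z/a)\,g_j(a)$ on $[a_j,a]$ and the symmetric bound on $[b,b_j]$, so that $\sup_{[a_j,a]}g_j+\sup_{[b,b_j]}g_j\to0$ and $g_j$ lies below the competing (bounded-below) profile there for $j$ large. Assembling the two inclusions gives $\vep_j\to0$, hence $U_{0,j}\to U_0$ locally uniformly, and all the conditions of Theorem~\ref{thm qc} — together with \eqref{initial zero2} — are then verified.
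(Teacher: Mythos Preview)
Your proposal is correct and follows essentially the same route as the paper: both arguments hinge on a dilation squeeze $\delta_{\lambda_j^{-1}}(E_0)\subset E_{0,j}\subset\delta_{\lambda_j}(E_0)$ with $\lambda_j\to 1$ (your $1\pm\vep_j$), derived from the strict sub-homogeneity \eqref{graph starshape0} of $g$ together with the collar bound $g_j(z)\le (z/a)g_j(a)$ coming from \eqref{initial approx cond1}; both then use the degree-$2$ homogeneity of the Minkowski functionals to upgrade this to $U_{0,j}\to U_0$ locally uniformly, choose the same $\rho,c$ in the Proposition~\ref{prop initial} construction, and obtain a common compact $K_0$ from Lemma~\ref{lem minkowski}(vii). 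Your write-up is in fact more careful on one point the paper glosses over: you explicitly verify that $u_0$ is h-quasiconvex (needed as a hypothesis in Theorem~\ref{thm qc}) by identifying $\{U_0<s\}=\delta_{\sqrt s}(E_0)$ and using that dilations preserve h-convexity, whereas the paper takes this for granted.
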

\begin{proof}
{For $z \in [a_j,a]$ (resp., $z \in [b,b_j]$), by \eqref{initial approx cond1}, we have 
\[
 g_j(z)< \frac{z}{a} g_j(a), \quad \left(resp., \ g_j(z)< \frac{z}{b} g_j(b)\right)
\]
which implies $g_j(z) \to 0$ uniformly. As a result, we have}
$E_{0,j}\to E_0$ as $j\to \infty$ in the Hausdorff distance (associated to the Euclidean metric). Combining with the fact that $E_0$ satisfies (S1), it is not difficult to see that there exist $\lambda_j>1$ such that 
\[
\delta_{\lambda_j^{-1}}(E_0)\subset E_{0,j}\subset \delta_{\lambda_j}(E_0)
\]
and $\lambda_j\to 1$ as $j\to \infty$. It follows that $\lambda_j^{-2}\leq U_{0,j}(p)\leq \lambda_j^2$ for any $p\in \partial E_0$, which yields $U_{0,j}\to U_0$ uniformly on $\partial E_0$ as $j\to \infty$. Thanks to the homogeneity of $U_{0,j}$ and $U_0$ as in Lemma \ref{lem minkowski}(v), we get $|U_{0,j}(p)-U_0(p)|=U_0(p)|U_{0,j}(\hat{p})-U_0(\hat{p})|$ for all $p\in \H\setminus\{0\}$, where $\hat{p}=\delta_{\mu_p}(p)$ and $\mu_p=U_0(p)^{-1/2}$. Hence, for any compact set $K$, we obtain
\[
\sup_{p\in K} |U_{0,j}(p)-U_0(p)| \leq \sup_{p\in K} U_0(p)\ \sup_{q\in \partial E_0} |U_{0,j}(q)-U_0(q)|\to 0,
\]
as $j\to \infty$. {Thanks to this locally uniform convergence, we can choose the same constants $\rho$ and $c$ in the construction in Proposition \ref{prop initial} for $\hat{u}_{0, j}$ with $j$ large enough as well as $\hat{u}_0$.} 
Then it is easy to show that $\hat{u}_{0, j}\to \hat{u}_0$ uniformly in any compact set $K$ as $j\to \infty$. Applying the star-shapedness of $E_0$ again, we have $U_0(p)<1$ for $p\in E_0$ and thus $\hat{u}_0$ fulfills \eqref{initial zero2}.

The growth condition \eqref{eq growth approx} holds for each $j$ due to Lemma \ref{lem minkowski}(vii). In addition, since $E_{0, j}$ are uniformly bounded, it also follows from Lemma \ref{lem minkowski}(vii) that there exist a compact set $K_0\subset \H$ and $C>0$ large such that $U_{0, j}> C$ in $\H\setminus K_0$ for all $j$. Then, taking the truncation $u_{0, j}=\min\{\hat{u}_{0, j}, C\}$, we see that $u_{0, j}$ and $\hat{u}_{0, j}$ satisfy all of the assumptions in Theorem \ref{thm qc}.
\end{proof}


Let us provide two examples of rotationally symmetric sets that satisfy the assumptions of Proposition~\ref{prop starshape sym} or Proposition~\ref{prop initial approx}.
\begin{example}\label{ex gauge-ball}
Let $E_0\subset \H$ be the unit gauge ball in $\H$, that is, 
\begin{equation}\label{rot sym initial}
E_0=\{(x, y, z)\in \H: x^2+y^2< g(z)\}
\end{equation}
with $g$ given by $g(z)=\sqrt{1-16z^2}$ for $z\in [-1/4, 1/4]$. The h-convexity preserving property of the set evolution by curvature in this case can be observed directly from an explicit solution of \eqref{mcf} in \cite[Section 1.4]{FLM1}. We can prove this result also by Theorem \ref{thm qc}. 

Note that $g\notin C^2([-1/4, 1/4])$ and therefore Proposition~\ref{prop starshape sym} does not apply. However, we can approximate $g$ uniformly by $g_j$ satisfying \eqref{initial approx cond1} and \eqref{initial approx cond2} in Proposition~\ref{prop initial approx}.  These $g_j$ can actually be taken as 
\[
g_j(z)=\begin{dcases}
g(z) &\text{if $|z|<m_j$,}\\
g(-m_j)+g'(-m_j)(z+m_j)+{1\over 2} g''(-m_j)(z+m_j)^2 & \text{if $a_j\leq z\leq -m_j$,}\\
g(m_j)+g'(m_j)(z-m_j)+{1\over 2} g''(m_j)(z-m_j)^2 &\text{if $m_j\leq z\leq b_j$,} 
\end{dcases}
\]
where $m_j={1\over 4}-{1\over j}\in (0, 1/4)$ with $j\geq 1$ sufficiently large. Here $a_j<-1/4, b_j>1/4$ satisfy $g_j(a_j)=g_j(b_j)=0$. {Notice that for each $j$, $g_j$ is is monotonically increasing on $[a_j,0]$, monotonically decreasing on $[0,b_j]$, and $g_j'' \leq 0$ since we have $g'' \leq 0$ on $(-1/4, 1/4)$. As a result, \eqref{initial approx cond1} and \eqref{initial approx cond2} hold. See Proposition \ref{prop starshape sym} for example.} Then, Proposition \ref{prop initial approx} enables us to find $\hat{u}_{0, j}$ and $\hat{u}_0$ for the h-quasiconvexity preserving result in Theorem \ref{thm qc}.
\end{example}
\begin{example}
An example of $g$ for which $E_0$ in the form \eqref{rot sym initial} is h-convex in $\H$ but not convex in $\R^3$ is 
\[
g(z)=(1-z^2)(1+2z^2), \quad -1\leq z\leq 1.
\]
Since $g\in C^2([-1, 1])$ and for $z\in [-1, 1]$, we get
\[
g'(z)=2z-8z^3, \quad g''(z)=2-24z^2, \quad {0 \leq g(z) \leq \max_{[-1,1]} g=g\left(\pm {1\over 2}\right)={9\over 8}},
\] 
which yields
\[
{1\over 2}(16+g'(z)^2)> 7 > 2 g''(z) g(z),
\]
In other words, \eqref{unif graph cond} holds with $a=-1$, $b=1$ and $\sigma=1/2$. It is easily seen that \eqref{graph starshape0} holds as well. Indeed, for all $z\in [-1, 1]$ and $0<\mu<1$, by direct computations we have
\[
\begin{aligned}
g(\mu z)-\mu g(z)&=1-\mu+\mu(\mu-1)z^2-(2\mu^4-2\mu)z^4\\
&=(1-\mu)(1-\mu z^2+2\mu(1+\mu+\mu^2)z^4) {\ge (1-\mu)^2 > 0.}
\end{aligned}
\]
Thus, we get $g(z)<g(\mu z)/\mu$ for all $z\in [-1, 1]$ and $0<\mu<1$. Hence,  such a set $E_0$ satisfies the assumptions of Proposition \ref{prop starshape sym}, and we can construct a uniformly h-quasiconvex function $\hat{u}_0\in C(\H)$ associated to $E_0$ as in the proof of Proposition \ref{prop initial}. 
\end{example}

\end{document}